\definecolor{Rosso}{RGB}{232,61,61}
\definecolor{Viola}{RGB}{188,30,188}
\definecolor{Celeste}{RGB}{34,186,211}
\definecolor{Arancione}{RGB}{232,181,19}
\definecolor{Blu}{RGB}{0, 44, 92}
\definecolor{Azzurro}{RGB}{57,186,238}
\newcommand\ackname{Acknowledgements}
  \newenvironment{acknowledgements}{%
      \titlepage
      \null\vfil
      \@beginparpenalty\@lowpenalty
      \begin{center}%
        \bfseries \ackname
        \@endparpenalty\@M
      \end{center}}%
     {\par\vfil\null\endtitlepage}
  \newenvironment{acknowledgements}{%
      \if@twocolumn
        \section*{\abstractname}%
      \else
        \small
        \begin{center}%
          {\bfseries \ackname\vspace{-.5em}\vspace{\z@}}%
        \end{center}%
        \quotation
      \fi}
      {\if@twocolumn\else\endquotation\fi}
\theoremstyle{definition}
\newtheorem{definition}{Definition}[section]
\theoremstyle{plain}
\newtheorem{theorem}[definition]{Theorem}
\newtheorem{proposition}[definition]{Proposition}
\newtheorem{corollary}[definition]{Corollary}
\newtheorem{lemma}[definition]{Lemma}
\newtheorem*{proposition*}{Proposition}
\theoremstyle{remark}
\newtheorem*{remark}{Remark}
\newtheorem{example}[definition]{Example}
\newcommand{\textnormalbf}[1]{\textnormal{\textbf{#1}}}
\newcommand{\defeq}{\vcentcolon=}
\newcommand{\vp}{\varphi}
\newcommand{\xra}[1]{\xrightarrow{#1}}
\newcommand{\into}{\hookrightarrow}
\newcommand{\quot}{\twoheadrightarrow}
\newcommand{\co}{\colon} 
\newcommand{\mfr}{\mathfrak}
\newcommand{\mcl}{\mathcal}
\newcommand{\msf}{\mathsf}
\newcommand{\R}{\mathbb R}
\renewcommand{\P}{\mathbb P}
\newcommand{\A}{\mathbb A}
\newcommand{\Z}{\mathbb Z}
\newcommand{\Q}{\mathbb Q}
\newcommand{\C}{\mathbb C}
\newcommand{\N}{\mathbb N}
\renewcommand{\k}{\mathbbm k}
\newcommand{\sub}{\subseteq}
\newcommand{\abs}[1]{\vert #1 \vert}
\newcommand{\sm}{\setminus}
\newcommand{\set}[1]{ \{#1\} }
\renewcommand{\S}{S}
\newcommand{\SO}{\textnormal{SO}}
\newcommand{\tup}[1]{(#1) }
\newcommand{\conf}{\textnormal{Conf} }
\newcommand{\spec}{\textnormal{Spec }}
\newcommand{\sing}{\textnormal{Sing}}
\newcommand{\bu}[1]{\textnormal{Bl}_{#1}\ }
\newcommand{\realbu}[1]{\textnormal{Bl}^\R_{#1}\ }
\newcommand{\logbu}[1]{\textnormal{Bl}^{\log}_{#1}}
\newcommand{\kn}{\textnormal{KN}}
\newcommand{\an}{\textnormal{an}}
\newcommand{\afm}{\textnormal{FM}^{\textnormal{alg}}}
\newcommand{\lfm}{\textnormal{FM}^{\textnormal{log}}}
\newcommand{\afmfun}{\mathcal{X}}
\newcommand{\topfm}{\textnormal{FM}^\textnormal{top}}
\newcommand{\kont}{\textnormal{K}}
\newcommand{\tfun}[1]{\mcl T_{#1}}
\newcommand{\psets}[1]{P_2({#1})}
\newcommand{\tk}{\textnormalbf{FM}}
\newcommand{\gk}{\textnormalbf{CGK}}
\newcommand{\lgk}{\textnormalbf{CGK}^{\log}}
\newcommand{\vgk}{\textnormalbf{CGK}^{\textnormal{V-log}}}
\newcommand{\ld}{\textnormalbf{LD}}
\newcommand{\setcat}{\textnormal{\textbf{Set}}}
\newcommand{\schcat}{\textnormal{\textbf{{Sch}}}}
\newcommand{\vlogcat}{{}^{\textnormal{\textbf{V}}} \textnormal{\textbf{Log}}}
\newcommand{\logcat}{\textnormal{\textbf{{Log}}}}
\newcommand{\vdflogcat}{{}^{\textnormal{\textbf{V}}} \textnormal{\textbf{DF-Log}}}
\newcommand{\dflogcat}{\textnormal{\textbf{{DF-Log}}}}
\title{Log Geometric Models for Little Disks Operads in Even Dimensions}
\author{Oliver Lindström}
\date{}
\begin{document}

\maketitle

\begin{abstract}
    \noindent We construct a model for the (non-unital) $\S^1$-framed little $2d$-dimensional disks operad for any positive integer $d$ using logarithmic geometry. We also show that the unframed little $2d$-dimensional disks operad has a model which can be constructed using log schemes with virtual morphisms.
\end{abstract}

\tableofcontents

\section{Introduction}
A recent construction of Dmitry Vaintrob \cite{vaintrob2021formality,vaintrob2019moduliframedformalcurves}, shows that the framed little $2$-dimensional disks (pseudo)-operad can be modeled as the analytification of a pseudo-operad in log schemes whose underlying schemes are $\overline{\mcl M}_{0,n+1}$, the moduli spaces of stable $(n+1)$-pointed rational curves of genus $0$. See also the expositions of Vaintrob's construction in \cite{bergstrom2023hyperelliptic} and \cite{dupont2024logarithmicmorphismstangentialbasepoints}.

This article gives a generalization of Vaintrob's construction to arbitrary even dimension. For every positive integer $d$, we construct a pseudo-operad $\lgk_d$ in the category of log varieties whose analytification is weakly equivalent to the $2d$-dimensional $\S^1$-framed little disks. We write $\S^1$ for the group $U(1)\cong \SO(2)$ and when we refer to an operad $\textnormalbf{O}$ \emph{framed} by some group $G$ which acts on $\textnormalbf{O}$, what is meant is the semidirect operadic product $\textnormalbf{O} \rtimes G$ \cite{salvatore2003framed}. The underlying spaces of the log varieties in $\lgk_d$ are the moduli spaces for stable $n$-pointed rooted trees of $d$-dimensional projective spaces, introduced by Chen, Gibney, and Krashen in \cite{chen2009pointed} and denoted $T_{d,n}$. The connection to Vaintrob's result is that these moduli spaces are higher dimensional analogs of $\overline{\mcl M}_{0,n+1}$ and in particular we have that $T_{1,n} \cong \overline{\mcl M}_{0,n+1}$.
Specifically, we show that the analytification $\lgk_d$ is homeomorphic to $\tk_{2d} \rtimes \S^1$, the $\S^1$-framed Fulton--MacPherson operad. By a result of Salvatore \cite{salvatore1999configuration} $\tk_n$ is $\SO(n)$-equivariantly weakly equivalent to $\ld_n$, the classical little disks operad in dimension $n$. Thus, this homeomorphism gives a weak equivalence between the analytification of $\lgk_d$ and $\ld_{2d}\rtimes \S^1$.
\theoremstyle{plain}
\newtheorem*{thm: main theorem (non-unital operad iso)}{Theorem \ref{thm: main theorem (non-unital operad iso)}}
\begin{thm: main theorem (non-unital operad iso)}
    The Kato--Nakayama analytification of $\lgk_d$ is homeomorphic to $\tk_{2d}\rtimes \S^1$.
\end{thm: main theorem (non-unital operad iso)} 

We will also show that the individual spaces $\tk_{2d}(n)$ are analytifications of log varieties; however, it will not be possible to construct an operad of log schemes with these as objects whose analytification is $\tk_{2d}$. Nevertheless, an operad with this property, which we denote $\vgk_d$, can be constructed in the category of log schemes with \emph{virtual morphisms}. Virtual morphisms of log schemes were introduced by Howell \cite{howell2017motives} and further studied by Dupont, Panzer, and Pym in a recent article \cite{dupont2024logarithmicmorphismstangentialbasepoints}, in which they also show that $\tk_2$ is homeomorphic to the analytification of an operad in log schemes with virtual morphisms.   
\theoremstyle{plain}
\newtheorem*{thm: virtual log operad}{Theorem \ref{thm: virtual log operad}}
\begin{thm: virtual log operad}
    The Kato--Nakayama analytification of $\vgk_d$ is homeomorphic to $\tk_{2d}$.
\end{thm: virtual log operad}

This has interesting consequences for the cohomology of $\ld_{2d}\rtimes \S^1$ and $\ld_{2d}$. First, $\lgk_d$ and $\vgk_d$ are defined over $\Q$ (and even over $\Z$). This induces a Galois action on the corresponding \'etale cohomology cooperads which lifts to an action on the level of \'etale cochains. If the weights of cohomology are \emph{pure} in a suitable sense the existence of such lifts implies formality of the corresponding operad. See for example Petersen \cite{petersen2013minimalmodelsgtactionformality}. Additionally, using the log geometric structure of these operads one can define mixed Hodge structures on the respective cohomology cooperads. 
Such mixed Hodge structures can also be tools for proving operad formality. In fact, Dupont, Panzer, and Pym use the fact that the Hodge structure on the $k$th cohomology of $\vgk_1$ is pure of weight $2k$ to re-prove that $\ld_2$ is formal.
It should be noted that all log varieties studied in this article are either smooth varieties with log structure associated to a smooth strict normal crossings divisor or smooth, closed, strong deformation retracts thereof with pulled back log structures. Therefore, it is known how one should define mixed Hodge structures for the log varieties studied in this article but in general the theory of Hodge structures in logarithmic geometry is still under development and there is, to the authors knowledge, no explicit construction of a mixed Hodge structure functor on fs log varieties, even if we restrict to the case of Deligne--Faltings log varieties. Naturally, there is even less material on the matter for log varieties with virtual morphisms. Considering this, it is debatable if the results of this article alone proves that there is a mixed Hodge structure on the cohomologies of $\ld_{2d}$ and $\ld_{2d} \rtimes \S^1$ but hopefully it will be clear to the familiar reader how one can define such a structure using our constructions.

From this one may expect that the construction of this article would imply that either of these formality arguments could be used to prove that the $\S^1$-framed little disks are formal in arbitrary even dimension. However, the argument requires, in some sense, that the weights of cohomology are pure and this is no longer the case for $\lgk_d$ when $d\geq 2$. For example, we can create a zig-zag of homotopy equivalences coming from morphisms of log-schemes from $\lgk_d(n)$ to $\conf_n(\A_\C^d)\times \mathbb{G}_m^{n}$ and thus the MHS (mixed Hodge structures) on both these spaces are isomorphic. We know that the MHS on $H^1(\mathbb G_m, \Q)$ is pure of weight $2$ and similarly, the MHS on $H^{2d-1}(\conf_n(\A^d_\C), \Q)$ is pure of weight $2d$. For $d=2$ this implies that the MHS on $H^3(\conf_3(\A_\C^2)\times \mathbb{G}_m^{3},\Q)\cong \Q(-2)\oplus \Q(-3)$ is mixed with weights $4$ and $6$. In fact, the real model for $\ld_n\rtimes \SO(n)$ by Khoroshkin and Willwacher in \cite{khoroshkin2025realmodelsframedlittle} can be used to construct a real model for $\ld_{2m}\rtimes \S^1$ and using this model one can show that $\ld_{2m}\rtimes \S^1$ is not formal.\footnote{This was made clear to the author through comments from Thomas Willwacher.} On the other hand, the (unframed) little disks are famously known to be formal in every dimension \cite{kontsevich1999operads,lambrecths2014formality}.

Even before Vaintrob's article it had long been expected that the little disks, and the framed little disks are in some sense \emph{motivic}, or algebro-geometric. A first clue towards this expectation was the Galois action on $C_*(\ld_2)$ via Grothendieck-Teichmüller theory \cite{drinfeld1990quasitriangular,bar-natan1998associators}. This circle of ideas has been developed by a large number of people, including \cite{morava2003motivicthomisomorphism,petersen2013minimalmodelsgtactionformality,kontsevich1999operads,debrito2024algebrogeometricmodelconfigurationcategory}. 

\subsection{Structure and Results}
Section \ref{sec: intro to DF log-schemes} gives a short introduction of logarithmic geometry and in particular we give a definition of Deligne--Faltings, or DF, log schemes and describe their relationship with ``normal'' log schemes. We also define the real oriented blow-up of a topological space in a section of a vector bundle and we explain how this can be used to give an equivalent definition of the so called Kato--Nakayama analytification \cite{kato1999log} of a DF log scheme. Finally, we use the notion of virtual morphisms of log schemes \cite{howell2017motives, dupont2024logarithmicmorphismstangentialbasepoints} to give a definition of a virtual morphism of DF log schemes and explain why this is interesting. 

In \cref{sec:topological constructions} we recall the definition of the topological Fulton--MacPherson compactification and the Fulton--MacPherson model of the little $D$-dimensional disks operad, denoted $\tk_D$ \cite{kontsevich1994feynman,axelrodsinger1994chernsimons, sinha2004manifold,getzler1994operadshomotopyalgebraiterated}. We describe an action by $\SO(D)$ on $\tk_D$ which is compatible with the corresponding action on $\ld_D$, the $D$-dimensional little disks operad. In the case where $D =2d$ is even this gives an induced action by $\S^1 \cong \SO(2)$ via the diagonal embedding $\SO(2) \into \SO(d)$. This action allows us to define the $\S^1$-framed Fulton--MacPherson operad $\tk_{2d}\rtimes \S^1$ which is weakly equivalent to the $\S^1$-framed little disks operad.

In \cref{sec: geometric operad} we recall the definition of the schemes $T_{d,n}$ introduced by Chen, Gibney, and Krashen \cite{chen2009pointed}.
We use a functor of points description of these schemes to define an operad in schemes with objects $T_{d,n}$ for each fixed $d$. We call this operad the Chen, Gibney, Krashen operad, denoted $\gk_d$.

Building on this, in  \cref{sec: log geometric operad} we define Deligne--Faltings log varieties $\msf T_{d,n}$ whose underlying varieties are $T_{d,n}$ and we extend the symmetry and composition morphisms (but not the unit morphism) of $\gk_d$ to maps of these log varieties. This defines a pseudo-operad of log schemes, $\lgk_d$. We then show that the analytification of $\lgk_d$ is $\tk_{2d}\rtimes \S^1$. We describe how to define the unit morphism of $\lgk_d$ as a virtual morphism of log schemes and furthermore we show that with virtual morphisms of log schemes we can also define an operad $\vgk_d$ whose analytification is $\tk_{2d}$. 

\begin{acknowledgements}
    \noindent This article is a reworked version of the authors master's thesis \cite{lindström2024logarithmic} and is written as a part of the author's Ph.D. studies. The author would like to thank his Ph.D supervisor Dan Petersen for many helpful discussions and suggestions both during the writing of this article and the master's thesis.\\
\end{acknowledgements}

\section{Deligne--Faltings Log Schemes}\label{sec: intro to DF log-schemes}
We will give all definitions and results relating to logarithmic geometry in this article in terms of Deligne--Faltings log structures. The main reasons for this are that some definitions and results are  easier to state using this terminology and, perhaps more importantly, the Kato--Nakayama analytification of a Deligne--Faltings log scheme can be explicitly described in terms of real oriented blow-ups which will turn out to be very useful for the purposes of this article. The precise definition of a DF log scheme may vary between articles but we will use that of Bergstr{\"o}m, Diaconu, Petersen, and Westerland \cite{bergstrom2023hyperelliptic}. In this section we will on some occasions reference ``normal'' log structures and therefore we will spell out the prefix ``DF''. This will not be the case in remaining sections and therefore ``log scheme'' should be taken to mean ``DF log scheme'' outside of \cref{sec: intro to DF log-schemes}.

\begin{definition}
    A \emph{Deligne--Faltings (or DF) log structure} on a scheme $X$ is a finite tuple $\mfr L = \tup{s_i\co \mcl O_X \to \mcl L_i}_{1\leq i \leq n}$ of invertible sheaves with sections. 
    A morphism of log structures on $X$,
    $$\tup{s_i\co \mcl O_X \to \mcl L_i}_{1\leq i \leq n} \to \tup{t_j\co \mcl O_X \to \mcl M_j}_{1\leq j \leq m}$$
    is a collection $\{e_{ij}\}$ of non-negative integers together with $n$ isomorphisms of line bundles
    $$\mcl L_i \xra{\cong} \bigotimes_{1\leq j \leq m}\mcl M_j^{\otimes e_{ij}}$$
    which also identify the sections $s_i$ to the corresponding sections $\bigotimes_{1\leq j \leq m}t_j^{\otimes e_{ij}}$.
\end{definition}
\begin{remark}
    For those with a sufficient background in log geometry: A DF log structure $\mfr L = \tup{s_i\co \mcl O_X \to \mcl L_i}_{1\leq i \leq n}$ on some scheme $X$ induces a log structure $M(\mfr L) = \mcl M^{\log} \to \mcl O_X$ where $\mcl M^{\log}$ is the logification of the pre-log structure 
    $$\mcl M(U) = \bigsqcup_{\alpha_i \geq 0} ((\mcl L_1^\vee)^{\otimes \alpha_1})^* \otimes \dots \otimes \mcl ((\mcl L_n^{\vee})^{\otimes \alpha_n})^*$$
    with the obvious monoid structure and with the map to $\mcl O_X$ induced by $s_i^\vee\co \mcl L_i^\vee \to \mcl O_X$. The superscript $*$ here indicates the subsheaf of invertible sections. A morphism of DF log structures induces a map of corresponding log structures in the obvious, functorial, way.
\end{remark}

\begin{definition}
    A \emph{DF log scheme} $\msf X = (X,\mfr L)$ is a scheme $X$ with a DF log structure $\mfr L$. We call $X$ the \emph{underlying scheme} of $\msf X$, sometimes denoted $\underline{\msf X}$. 
    A morphism of DF log schemes $(X,\mfr L) \to (Y,\mfr M)$ is a morphism of schemes $f\co X\to Y$ and a morphism of DF log structures $f^*\mfr M \to \mfr L$. 
    Such a morphism is said to be \emph{strict} if $f^*\mfr M\to \mfr L$ is an isomorphism of DF log structures.
\end{definition}
An important special case of a DF log scheme is a scheme with the trivial DF log structure, i.e. no line bundles. For a scheme $X$ we call $(X, \emptyset)$ the \emph{log scheme associated to $X$} and we will, by abuse of notation, simply denote this by $X$. If $\msf Y$ is a log scheme and $f\co \underline{\msf Y} \to X$ is a map of schemes then there is a unique morphism of log schemes $\msf Y \to X$ with underlying map $f$. Similarly a morphism $f\co X \to \underline{\msf Y}$ gives a morphism of log schemes $X \to \msf Y$ if and only if the pullbacks via $f$ of all line bundles with sections in the log structure of $\msf Y$ are trivial with a unit section and if this is the case then this morphism of log schemes is unique.

\subsection{Blow-Ups}\label{subsec: blow-ups}
In this section we define real oriented blow-ups in sections of line bundles and prove some results relating to these. These will be used to define the Kato--Nakayama analytification of a log scheme. 

First, let $X$ be a topological space, let $\eta\co E \to X$ be a rank $n$ vector bundle and let $s\co X\to E$ be a section. Furthermore, let $E_0$ denote the image of the zero section and let $E' \defeq E\sm E_0$. Note that the action by $\R_{>0}$ (and in fact the action by $\R^*$) on $E$ restricts to a free action on $E'$. We define the real oriented blow-up of $X$ in $s$, denoted $\realbu{s} X$ as the space
$$\{p\in E'\  \vert\  \exists\  \alpha\in \R_{\geq 0} \co\   p =\alpha \cdot (s\circ \eta)(p)\}/\R_{>0}$$
and we define the blow-down map $\rho\co \realbu{s} X \to X$ as the map induced by passing $p\co E\to X$ to the quotient. Note that $\rho$ has an inverse away from the zero locus of $s$ and for any $x$ in the zero locus of $s$ we have that $\rho^{-1}(x) \cong \S^{n-1}$.

The ``usual'' definition of the real oriented blow-up of a smooth manifold $X$ in a smooth closed sub manifold $Y$, denoted $\realbu{Y}X$ is that it is the complement of a tubular neighbourhood of $Y$ in $X$ and the blow-down map is then taken to be a retraction of this neighbourhood onto $Y$. It is also possible to give a functorial definition as in \cite{Arone2010OnTF}.
The following result, whose proof I will omit, relates this construction to the usual notion of a blow-up in a smooth closed submanifold.
\begin{proposition}
    Let $M$ be a (smooth) manifold, let $E$ be a vector bundle, let $s\co M\to E$ be a smooth section whose image intersects $E_0$ transversally, and let $Y = s^{-1}(E_0)$ be the zero locus of $s$. Then there is a unique isomorphism of spaces over $M$,
    $$\realbu{s} M \cong \realbu{Y} M.$$
\end{proposition}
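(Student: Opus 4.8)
The plan is to build the isomorphism by extending, across the exceptional locus, the tautological identification of the two blow-ups over $M\sm Y$. Write $\rho_s\co \realbu{s}M \to M$ and $\rho_Y\co \realbu{Y}M\to M$ for the two blow-down maps, set $n=\operatorname{rank}E$, and recall that each restricts to a homeomorphism over $M\sm Y$ while the fibre over any $y\in Y$ is $\rho_s^{-1}(y)\cong \S^{n-1}\cong\rho_Y^{-1}(y)$. I would first dispatch uniqueness, which is the easy half. Any isomorphism $\Phi$ over $M$ satisfies $\rho_s\circ\Phi=\rho_Y$, so over $M\sm Y$, where both blow-downs are invertible, $\Phi$ is forced to equal $\rho_s^{-1}\circ\rho_Y$ and is thereby uniquely determined there. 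Since the exceptional locus is the boundary of a manifold with boundary, $\rho_Y^{-1}(M\sm Y)$ is dense in $\realbu{Y}M$, and $\realbu{s}M$ is Hausdorff, so two isomorphisms agreeing on this dense subset must coincide.

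For existence I would define $\Phi$ on $\rho_Y^{-1}(M\sm Y)=M\sm Y$ by $x\mapsto [s(x)]$, the ray spanned by $s(x)$, which is the point of $\realbu{s}M$ over $x$, and then show that this extends to a homeomorphism — in fact a diffeomorphism of manifolds with boundary — over all of $M$. The work is entirely local near $Y$, and this is exactly where \emph{transversality} enters. For $y\in Y$ the vertical component of $ds_y$ is surjective, so by the submersion normal form I can choose coordinates $(x_1,\dots,x_k)$ on a neighbourhood $U$ of $y$ together with a trivialization $E|_U\cong U\times\R^n$ in which $s$ becomes the linear projection $x\mapsto(x_1,\dots,x_n)$ and $Y\cap U=\{x_1=\dots=x_n=0\}$. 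In these coordinates the part of $\realbu{s}M$ over $U$ is precisely $\{(x,v)\in U\times\S^{n-1} : (x_1,\dots,x_n)\in\R_{\geq 0}\,v\}$, which is exactly the classical polar-coordinate model of the real oriented blow-up of $U$ along the coordinate plane $Y\cap U$; the latter is in turn the local model of $\realbu{Y}M$ over $U$. Thus over $U$ the two blow-ups are literally the same subspace of $U\times\S^{n-1}$, so $\Phi|_U$ is the identity, which is manifestly a diffeomorphism restricting to $x\mapsto[s(x)]$ over $U\sm Y$.

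Finally I would glue. The locally defined isomorphisms, together with the identity on $M\sm Y$, all restrict to the same tautological map over the complement of the exceptional locus, so by the density-plus-Hausdorff argument used for uniqueness they agree on overlaps and patch to a global isomorphism $\Phi$ over $M$. I expect the middle step to be the main obstacle: namely checking that the direction map $x\mapsto[s(x)]$ extends continuously across $Y$, with the exceptional sphere $\rho_Y^{-1}(y)\cong\S(\nu_{Y,y})$ sent to $\rho_s^{-1}(y)\cong\S(E_y)$ via the transversality isomorphism $\nu_{Y,y}\xrightarrow{\cong}E_y$ induced by $ds_y$ (where $\nu_Y$ is the normal bundle of $Y$). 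The cleanest route is precisely the normal form above, which trivializes this first-order computation by making $s$ linear and $Y$ flat. Along the way one should also invoke the standard fact, available through the functorial description of \cite{Arone2010OnTF}, that the tubular-neighbourhood definition of $\realbu{Y}M$ coincides with this polar-coordinate local model, so that the comparison is legitimate.
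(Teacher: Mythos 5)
The paper offers no proof of this proposition to compare against --- it is stated with ``whose proof I will omit'' --- so your argument stands or falls on its own, and it stands: it is correct and essentially complete. Your uniqueness argument --- (i) any isomorphism over $M$ must agree with $\rho_s^{-1}\circ\rho_Y$ over $M\sm Y$, (ii) $\rho_Y^{-1}(M\sm Y)$ is dense in $\realbu{Y}M$ since the exceptional locus is the boundary of a manifold with boundary, (iii) the target is Hausdorff --- is exactly right, and the submersion normal form is the correct mechanism for existence: transversality is precisely what lets you choose coordinates and a trivialization in which $s$ becomes the projection $(x_1,\dots,x_k)\mapsto(x_1,\dots,x_n)$, whereupon both blow-ups become literally the same subspace $\{(x,v)\in U\times\S^{n-1} :\ (x_1,\dots,x_n)\in\R_{\geq 0}\,v\}$ of $U\times\S^{n-1}$, and the local identity maps glue by the same density-plus-Hausdorff argument. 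Two small points deserve to be made explicit rather than left implicit. First, your local description of $\realbu{s}M$ reads the paper's definition as requiring $s(\eta(p))\in\R_{\geq 0}\,p$; the formula as printed ($p=\alpha\cdot(s\circ\eta)(p)$ with $\alpha\geq 0$) would give \emph{empty} fibres over $Y$, contradicting the paper's own assertion that these fibres are spheres $\S^{n-1}$, so your reading is the intended one, but it is worth saying so. Second, the uniqueness claim only makes sense once a canonical model of $\realbu{Y}M$ with a canonical blow-down map is fixed; the paper's ``complement of a tubular neighbourhood'' description is canonical only up to non-unique isomorphism, so your insistence on the ray-space (polar-coordinate) model, e.g.\ via the functorial construction of \cite{Arone2010OnTF}, is not a pedantic aside but the step that makes ``unique isomorphism of spaces over $M$'' well posed.
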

\begin{remark}
    Note that a necessary, but not sufficient, condition here is that the codimension of $Y$ is the rank of the vector bundle. This resembles the algebraic situation where if $X$ is smooth and $Y$ is a locally complete intersection the blow-up $\bu{Y}X$ is locally a closed subscheme of a projective bundle over $X$.
\end{remark}
Despite these properties one should be aware that $\realbu{s} X$ does not always resemble the ``usual'' definition of a blow-up. For example if we take the blow-up in the zero section of $E$ the result is (up to canonical isomorphism) the sphere bundle of $E$ which, in the case where $X$ is a manifold, has higher dimension than $X$. Even stranger situations are also possible and the blow-up of a manifold is not even necessarily a manifold with corners. For example let $s\co\S^2\to \R^2$, considered as a section of a trivial bundle on $\S^2$, be defined by $s(x,y,z) = (x-1,0)$. The real oriented blow-up $\realbu{s}\S^2$ is the wedge sum $\S^1\vee \S^2$.

\begin{lemma}\label{lemma: pullback of line bundle to complement of zero-section}
    Let $X$ be a topological space, let $\pi\co E\to X$ be a complex line bundle, and let $E'$ be the complement of the zero section in $E$. Then $\pi\vert_{E'}^* E \to E'$ is trivial.
\end{lemma}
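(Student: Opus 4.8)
The plan is to produce a nowhere-vanishing global section of $\pi\vert_{E'}^* E$ and then invoke the standard fact that a complex line bundle admitting such a section is trivial. The key observation is that the pullback bundle carries a \emph{tautological} section, coming from the fact that points of $E'$ are themselves vectors in the fibers of $E$.

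First I would unwind the definition of the pullback. By construction the total space of $\pi\vert_{E'}^* E$ is $\{(e,v)\in E'\times E : \pi(e) = \pi(v)\}$ with projection $(e,v)\mapsto e$, so the fiber over a point $e\in E'$ is canonically identified with the fiber $E_{\pi(e)}$ of the original bundle. Since $e$ is itself a (nonzero) element of $E_{\pi(e)}$, the assignment
$$\tau\co E' \to \pi\vert_{E'}^* E, \qquad e\mapsto (e,e)$$
is a well-defined section of the pullback. Continuity of $\tau$ is immediate, as it is built from the diagonal-type map of $E'$ into $E'\times E$, and because $E'$ is by definition the complement of the zero section, $\tau(e)$ is nonzero in each fiber; that is, $\tau$ is nowhere-vanishing.

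Finally I would use $\tau$ to trivialize the bundle: the map $E'\times \C \to \pi\vert_{E'}^* E$ defined fiberwise by $(e,\lambda)\mapsto \lambda\cdot \tau(e)$ is a continuous isomorphism of complex line bundles over $E'$, with continuous inverse obtained by dividing by $\tau$. This exhibits $\pi\vert_{E'}^* E$ as the trivial line bundle, as claimed.

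There is essentially no substantive obstacle here beyond bookkeeping. The one point requiring a little care is verifying that the trivialization map is a genuine homeomorphism onto the total space, and not merely a fiberwise linear bijection; this follows from the local triviality of $E$, since in a local trivialization the section $\tau$ and the division map are manifestly continuous. The argument is purely formal and uses no special feature of the base $X$.
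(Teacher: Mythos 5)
Your proof is correct and follows essentially the same route as the paper: the tautological section $e\mapsto (e,e)$ is exactly the restriction to $E'$ of the diagonal map $E\to E\times_X E$ used in the paper's proof, and both arguments conclude by invoking that a line bundle with a nowhere-vanishing section is trivial. Your additional verification of the explicit trivialization is fine but not needed beyond the standard fact.
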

\begin{proof}
    The diagonal map $E\to E\times_X E$ restricts to a nowhere $0$ section, $E'\to \pi\vert_{E'}^* E$. Thus $\pi\vert_{E'}^* E$ is a line bundle with a nowhere $0$ section and is hence trivial. 
\end{proof}
\begin{remark}
    If $X$ is a smooth manifold and $E$ is a smooth bundle then $\pi\vert_{E'}^* E \to E'$ is trivial as a smooth bundle by the same argument.
\end{remark}
\begin{corollary}
    Let $s\co X\to E$ be any section.
    The pullback of $E\to X$ to the blow-up $\realbu{s}X$ is a trivial bundle.
\end{corollary}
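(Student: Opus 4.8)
The plan is to deduce the corollary directly from \cref{lemma: pullback of line bundle to complement of zero-section} by factoring the blow-down map through the circle bundle $E'/\R_{>0}$. First I would note that, by the very definition of the blow-up, the subset $\{p\in E' : \exists\,\alpha\in\R_{\geq 0},\ p=\alpha\,(s\circ\eta)(p)\}$ is invariant under the scaling action of $\R_{>0}$, so that $\realbu{s}X$ is a subspace of $E'/\R_{>0}$ and the blow-down map $\rho$ is the restriction of the projection $\bar\eta\co E'/\R_{>0}\to X$ induced by $\eta$. Writing $\iota\co\realbu{s}X\into E'/\R_{>0}$ for this inclusion, we then have $\rho^*E\cong\iota^*\bar\eta^*E$, so it suffices to prove that $\bar\eta^*E$ is trivial on all of $E'/\R_{>0}$.

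To prove this I would pull back along the quotient map $q\co E'\to E'/\R_{>0}$. Since $\bar\eta\circ q=\eta|_{E'}$, we obtain $q^*\bar\eta^*E\cong\eta|_{E'}^*E$, and the latter is trivial by \cref{lemma: pullback of line bundle to complement of zero-section}. It then remains only to descend this triviality across $q$: the map $q$ is a principal bundle for the free scaling action of $\R_{>0}$, and $\R_{>0}$ is contractible, so $q$ is a homotopy equivalence and in particular admits a continuous section $\sigma$. Pulling back along $\sigma$ and using $q\circ\sigma=\id$ gives $\bar\eta^*E\cong\sigma^*q^*\bar\eta^*E$, which is therefore trivial. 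Combined with the factorization above, $\rho^*E\cong\iota^*\bar\eta^*E$ is trivial as well.

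The only genuinely non-formal step---the part I would treat as the main obstacle---is the descent along $q$: triviality of a line bundle on the total space $E'$ need not pass to the quotient, and it is precisely the contractibility of the structure group $\R_{>0}$ that rescues the argument. Concretely a section of $q$ can always be manufactured from a Hermitian metric on $E$ via $[v]\mapsto v/|v|$, but the metric-free formulation through homotopy invariance of topological line bundles is cleaner and needs nothing beyond paracompactness of the base. Everything else reduces to the naturality of pullbacks, so once the factorization $\rho=\bar\eta\circ\iota$ is recorded the corollary is immediate.
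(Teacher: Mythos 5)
Your proof is correct and is essentially the paper's own argument: the paper's one-line proof introduces a Hermitian metric to embed $\realbu{s}X$ into $E'$ so that the blow-down map factors through $E'$, then applies \cref{lemma: pullback of line bundle to complement of zero-section}, and your composite $\sigma\circ\iota$ (with $\sigma$ the metric-induced section $[v]\mapsto v/|v|$) is precisely that embedding, so the detour through $E'/\R_{>0}$ repackages the same idea. One caution: the clause ``$q$ is a homotopy equivalence and in particular admits a continuous section'' is a non-sequitur as stated---a homotopy equivalence need not admit a strict section---but your argument stands either via the metric-based section you construct yourself, or by using only a homotopy section together with homotopy invariance of line bundles over a paracompact base (the same implicit paracompactness the paper needs in order to introduce a metric at all).
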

\begin{proof}
    By introducing a Hermitian metric, the blow-up can be embedded into $E'$ and the blow-down map factors via this embedding. Thus, the pullback of $E$ to this blow-up is trivial by \cref{lemma: pullback of line bundle to complement of zero-section}.
\end{proof}

\begin{corollary}\label{cor: blow up in zero section of tensor prod of blown up line bundles is trivial}
    Let $L_1,L_2,\dots, L_n$ be complex line bundles on a manifold $X$ and let $L=\bigotimes_{i=1}^nL_i^{\otimes e_i}$ where $e_i$ are integers and $\otimes$ is the complex tensor product. Let $\sigma_1,\dots, \sigma_n$ be sections $\sigma_i\co X \to L_i$. Then, there is an isomorphism
    $$\realbu{\tilde \sigma_0}\realbu{\tilde \sigma_n}\dots \realbu{\sigma_1} X \xra{\cong} \left (\realbu{\tilde \sigma_n}\dots \realbu{\sigma_1} X \right )\times \S^1$$
    where $\tilde \sigma_i$ denotes the pullback of $\sigma_i$ through all previous morphisms and $\sigma_0\co X\to L$ is the zero section.
\end{corollary}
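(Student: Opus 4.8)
The plan is to reduce the statement to two facts already at hand: that the outermost blow-up is taken in a \emph{zero section}, so by the discussion above it is the unit sphere bundle of the relevant line bundle, and that this line bundle has become trivial after the earlier blow-ups, so that its sphere bundle splits off an $\S^1$-factor. Throughout, write $Y \defeq \realbu{\tilde \sigma_n}\dots \realbu{\sigma_1} X$ for the iterated blow-up and let $\rho\co Y \to X$ be the composite blow-down map; the asserted isomorphism is then $\realbu{\tilde\sigma_0} Y \xra{\cong} Y \times \S^1$.

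First I would show that the pullback $\rho^* L$ is trivial on $Y$. At the $i$-th stage we blow up in $\tilde\sigma_i$, which is a section of the pullback of $L_i$ to the space constructed so far; by the preceding corollary (the pullback of a bundle to the blow-up in one of its sections is trivial), the further pullback of $L_i$ to $\realbu{\tilde\sigma_i}(\cdots)$ is trivial. Since the pullback of a trivial bundle remains trivial, induction on $i$ shows that the pullback of \emph{every} $L_i$ to $Y$ is trivial. Hence $\rho^* L = \bigotimes_{i=1}^n (\rho^* L_i)^{\otimes e_i}$ is a tensor product of trivial line bundles (with negative powers read as inverses), so it is itself trivial; fix a trivialization $\rho^* L \cong Y \times \C$.

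Next, observe that $\tilde\sigma_0$, the pullback of the zero section $\sigma_0\co X\to L$ along $\rho$, is exactly the zero section of $\rho^* L$ on $Y$. Therefore $\realbu{\tilde\sigma_0} Y$ is the blow-up of $Y$ in the zero section of $\rho^* L$, which by the discussion above is the unit sphere bundle of $\rho^* L$; as $\rho^* L$ is a complex line bundle its fibers are $\S^1$, and under the trivialization $\rho^* L \cong Y \times \C$ this sphere bundle is canonically $Y \times \S^1$ (with $\S^1 \subset \C$ the unit circle). Since this identification respects the projections to $Y$, it is the desired isomorphism of spaces over $Y$. The only point requiring care is the bookkeeping in the inductive step: one must verify that the triviality of the pullback of each $L_i$, obtained precisely at the stage where we blow up in $\tilde\sigma_i$, is preserved under all subsequent blow-ups, and that the final sphere-bundle identification is canonical enough to be compatible with the chosen trivialization. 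I expect no genuine obstacle beyond keeping track of these successive pullbacks.
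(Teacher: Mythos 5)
Your proof is correct and is essentially the paper's own argument: the paper disposes of this corollary with the single line ``this is immediate from the previous corollary,'' and your write-up fills in exactly that intended reasoning --- inductively trivializing the pullback of each $L_i$ (hence of $L$) on $Y = \realbu{\tilde \sigma_n}\dots \realbu{\sigma_1} X$ via the preceding corollary, then identifying the blow-up of $Y$ in the zero section of the trivialized bundle with its unit circle bundle $Y \times \S^1$.
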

\begin{proof}
    This is immediate from the previous corollary.
\end{proof}

\subsection{Kato--Nakayama Analytification}
For a DF log scheme $\msf X =(X, \tup{s_i\co \mcl O_X \to \mcl L_i}_{1\leq i \leq n})$ such that $X$ is of finite type over $\C$ one can construct an associated topological space $\msf X^\kn$ called its \emph{Kato--Nakayama}  analytification. This space is defined as the sequence of real oriented blow-ups
$$\realbu{\tilde s_n}\realbu{\tilde s_{n-1}} \dots \realbu{s_1} X^{\an}$$
where $\tilde s_i$ is the section $s_i$ of the vector bundle $\mcl L_i$ pulled back via all previous blow-ups. This is order independent up to canonical isomorphism since, at each step, we are blowing up in the pullback (i.e. total, not strict, transform) of the corresponding line bundle with section.
Regardless of order we let $\rho_{\msf X}\co \msf X^\kn \to X^\an$ denote the corresponding blow-down map. This construction can be made functorial in the following way.

First, if $f\co (X,\tup{s_i\co \mcl O_X \to \mcl L_i}_{1\leq i \leq n}) \to (Y,\tup{t_i\co \mcl O_X \to \mcl M_i}_{1\leq i \leq n})$ is a strict morphism of log-schemes, i.e a morphism such that the log structure morphism is given by isomorphisms $f^*\mcl M_i \to \mcl L_i^{\otimes 1}$ for each $i$, then we can form the following commutative and cartesian (with respect to the downward vertical arrows) diagram 
\begin{center}
    \begin{tikzcd}
        E_X^1 \arrow[shift left = 0.75ex]{d}\arrow{r} & E_Y^1\arrow[shift left = 0.75ex]{d}\\
        X(\C)\arrow[shift left = 0.75ex]{u}{s}\arrow{r}{f^{\an}} & \arrow[shift left = 0.75ex]{u}{t}Y(\C)
    \end{tikzcd}
\end{center}
where we have $E_X^1 = \mcl L_1^\an$ and $E_Y^1 = \mcl M_1^\an$ and where $s,t$ are the corresponding sections. The blow-ups $\realbu{s_1} X(\C)$ and $\realbu{t_1} Y(\C)$ are subspaces of the quotients by an equivalence relation $\sim$ on $E_X^1$ and $E_Y^1$ respectively where two vectors in a fibre are identified if they are equal up to \textbf{positive} scalar multiplication. By commutativity of the diagram, the subspace corresponding to the blow-up of $X(\C)$ is mapped to the blow-up of $Y(\C)$ as subspaces of  $E_X^1/\sim$ and $E_X^1/\sim$ by the induced quotient space map. Thus, we get an induced subspace map $\realbu{s_1} X(\C) \to \realbu{t_1} Y(\C)$. Iterating this gives the desired map of the entire sequence of blow-ups.

Next, let $\phi \co \mfr M =\tup{t_j\co \mcl O_X \to \mcl M_j}_{1\leq j \leq m} \to \mfr L = \tup{s_i\co \mcl O_X \to \mcl L_i}_{1\leq i \leq n}$ be a morphism of DF log structures on $X$ given by integers $\{e_{ij}\}$ and isomorphisms
$\phi_i \co \mcl M_i \to \bigotimes_{j}\mcl L^{\otimes e_{ij}}$. For simplicity assume that all line bundles are trivial. The general case can be constructed from this by gluing.
Furthermore, let $\pi\co (X, \mfr L)^\kn \to X(\C)$ and $\rho \co (X,\mfr M)^\kn \to X(\C)$ denote the corresponding blow-down maps. Then, by composing with the trivializing isomorphisms, we obtain a unique invertible algebraic function $\lambda_i\co X \to \C$ such that we can identify the analytification of $\phi_i$
with a map $\tilde{\phi_i}\co X(\C)\times\C \to X(\C)\times\bigotimes_{1\leq j\leq n} \C^{\otimes e_{ij}}$ which sends
$$(x,z) \mapsto (x, \lambda_i(x) z\bigotimes_{1\leq j\leq n} 1^{\otimes e_{ij}}).$$
This gives a map of spaces over $X(\C)$, 
$X(\C)\times (\C^*)^n \to X(\C)\times (\C^*)^m$ given by 
$$(x,(z_1,\dots, z_n)) \mapsto (x,(\lambda_1(x)\prod_{j} z_j^{e_{1j}}, \dots, \lambda_n(x)\prod_{j} z_j^{e_{nj}})).$$
By definition of $\lambda_i$ this map sends
$$(x,s_1(x),\dots, s_n(x))\mapsto (x,t_1(x),\dots, t_m(x))$$ 
and hence induces a morphism of blow-ups as quotients of subspaces of these bundles.

Since any morphism of log schemes is the composition of a strict morphism and a morphism given by the identity on underlying spaces these two cases are enough to define the analytification of an arbitrary map of log schemes.

One can verify that this definition of the analytification functor for DF log scheme agrees with the analytification of its associated log scheme originally introduced by Kato and Nakayama in \cite{kato1999log}. This can be checked through direct computation but for a less tedious argument see \cite{bergstrom2023hyperelliptic}. From section \ref{sec:topological constructions} onward we will drop the ``DF'' prefix and let ``log scheme'' mean ``DF log scheme''. All log schemes referenced in this article will indeed be DF log schemes but because of the above the distinction is not important for our purposes anyway.
\begin{remark}
    In \cite{kato1999log}, $\msf X^{\kn}$ is also given an associated sheaf of rings which, provided that $\underline{\msf X}$ is smooth and that the divisor associated to all non zero sections of line bundles is a smooth normal crossings divisor, gives $X^{\kn}$ the structure of a smooth real manifold with corners. This motivates the name ``analytification''. These conditions are all satisfied for the log schemes we consider in this article and we could, with a little more work, replace the word ``homeomorphism'' with ``diffeomorphism'' in \cref{thm: main theorem (non-unital operad iso)} and \cref{thm: virtual log operad}.
\end{remark}

We will not need to use this explicit definition of the analytification of a morphism at any point in this article. Instead, all we will need is that the analytification of a map exists, is functorial, and satisfies the following properties:
\begin{itemize}
    \item For a log scheme $\msf X$ with log structure $\mfr L = \tup{s_i\co \mcl O_{\underline{\msf X}} \to \mcl L_i}_{1\leq i \leq n}$ the analytification of the map $\msf X \to \underline{\msf X}$ given by the identity on underlying schemes is the blow-down map
    $$\msf X^\kn = \realbu{\tilde s_n}\realbu{\tilde s_{n-1}} \dots \realbu{s_1} \underline{\msf X}(\C) \to \underline{\msf X}(\C).$$ 
    \item For a strict morphism of log schemes $f\co \msf X \to \msf Y$ the following is a Cartesian diagram: 
    \begin{center}
        \begin{tikzcd}
            \msf X^\kn \arrow{d}\arrow{r} & \msf Y^\kn \arrow{d}\\
            \underline{\msf{X}}(\C) \arrow{r} & \underline{\msf Y}(\C)
        \end{tikzcd}
    \end{center}
\end{itemize}
Both of these properties are standard and easy to verify so we omit a proof and will use them freely without reference.

\subsection{Virtual Logarithmic Geometry}
In this section we will give a short introduction to virtual morphisms of log schemes which were introduced by Howell \cite{howell2017motives} and further studied by Dupont, Panzer, and Pym \cite{dupont2024logarithmicmorphismstangentialbasepoints}. Such virtual morphisms will be irrelevant for most sections of this article but in \cref{subsec: virtual log operads} we include a discussion on how one can generalize some of our results using virtual morphisms in ways that are not possible otherwise. In the language of normal, i.e. non DF, logarithmic geometry a log structure on a scheme $X$ is a sheaf of monoids $\mcl M$ with a morphism of sheaves of monoids $\alpha\co \mcl M \to \mcl O_X$ which restricts to an isomorphism $\alpha^{-1}(\mcl O_X^\times) \to \mcl O_X^\times$. A map of log structures is then a map of sheaves of monoids over $\mcl O_X$, $\mcl M_1 \to \mcl M_2$. Note that such a map commutes with the maps from $\mcl O^\times$ by definition.

A virtual morphism of log structures is a morphism of the groupifications $\mcl M_1^{\textnormal{gp}} \to \mcl M_2^{\textnormal{gp}}$ which makes the diagram
\begin{center}
    \begin{tikzcd}
        &\mcl O_X^\times\arrow[rd]\arrow[ld]&\\
        \mcl M_1^{\textnormal{gp}}\arrow[rr] &&\mcl M_2^{\textnormal{gp}}
    \end{tikzcd}
\end{center}
commute. Note that the groupifications $\mcl M_1^{\textnormal{gp}}, \mcl M_2^{\textnormal{gp}}$ do not have maps to $\mcl O_X$ which is why we use this weaker condition. Also note that the category of virtual log structures has more morphisms but we define it to have the same objects as the category of log structures. For a short motivation regarding why it makes sense to introduce such virtual morphisms it is worth mentioning that the log Betti and log de Rham cohomology functors are well defined for log schemes with virtual morphisms and the same is also true for the Kato--Nakayama analytification functor.

Although Dupont, Panzer, and Pym do define virtual morphisms of DF log structures their definition of a DF log structure is different from ours. Thus, we give a somewhat different definition here. 
\begin{definition}
    A virtual morphism of DF log structures on a scheme $X$ 
    $$\tup{s_i\co \mcl O_X \to \mcl L_i}_{1\leq i \leq n} \to \tup{t_j\co \mcl O_X \to \mcl M_j}_{1\leq j \leq m}$$
    is a collection $\{e_{ij}\}$ of integers together with isomorphisms of line bundles
    $$\mcl L_i \xra{\cong} \bigotimes_{1\leq j \leq m}\mcl M_j^{\otimes e_{ij}}$$
    for each $1\leq i \leq n$
    satisfying the following if $s_i \neq 0$
    \begin{itemize}
        \item $e_{ij}\geq 0$ for every $j$.
        \item the sections $s_i$ are mapped to the corresponding sections $\bigotimes_{1\leq j \leq m}t_j^{\otimes e_{ij}}$
    \end{itemize}
\end{definition}
\begin{remark}
    Although the constructions are closely related, our category of DF log schemes and the category of log schemes in \cite{dupont2024logarithmicmorphismstangentialbasepoints} are not equivalent categories nor are they subcategories of each other and thus this definition cannot be ``derived'' from that of Dupont, Panzer, and Pym in any meaningful way. 
\end{remark}
\begin{remark}
    For non connected schemes this definition must be appropriately modified for sections that are identically zero on some, but not all, components. 
\end{remark}
The category of DF log schemes with virtual morphisms, $\vdflogcat$ fits into a commutative diagram of categories:
\begin{center}
    \begin{tikzcd}
        \dflogcat_X\arrow[r] \arrow[d]& \logcat_X\arrow[d]\\
        \vdflogcat_X \arrow[r]& \vlogcat_X
    \end{tikzcd}
\end{center}
We will omit the construction of the functor $\vdflogcat_X\to\vlogcat_X$ but the interested reader is encouraged to construct it themselves and/or verify that our definition of the Kato--Nakayama analytification functor is still well defined for virtual morphisms. Doing this hopefully sheds some light on why this definition is a reasonable one.
\begin{example}\label{example: virt morph point into circle}
    There is no morphism of log schemes $\spec \k \to (\spec \k , 0\co \mcl O_{\k} \to \mcl O_{\k})$ but there is such a morphism of virtual log schemes (one for each automorphism $\mcl O_{\k} \to \mcl O_{\k}$). For $\k = \C$, the Kato--Nakayama analytification of these maps are the inclusions of a point in $\S^1$. Similarly, the group inverse map $i\co\S^1 \to \S^1$ is not the analytification of any map of log schemes $(\spec \C , 0\co \mcl O_\C \to \mcl O_\C) \to (\spec \k , 0\co \mcl O_\C \to \mcl O_\C)$ but it is a virtual map of log schemes corresponding to the isomorphism 
    $\mcl O_\C \cong \mcl O^{\otimes (-1)}_\C$.
\end{example}
\begin{example}
    Let $X$ be a smooth scheme and let $D$ be a smooth effective Cartier divisor. By definition
    $$(X, s_D\co \mcl O_X \to \mcl O_X(D))^\kn = \realbu{D(\C)} X(\C).$$
    By our definition of the real oriented blow-up there is an embedding of $\realbu{D(\C)} X(\C)$, into the unit circle bundle of (the analytification of) $\mcl O_X(D)$ with some arbitrarily chosen metric. This unit circle bundle is, again by definition, the KN analytification of $(X, 0\co \mcl O_X \to \mcl O_X(D))$. The inclusion
    $$(X, s_D\co \mcl O_X \to \mcl O_X(D))^\kn \into (X, 0\co \mcl O_X \to \mcl O_X(D))^\kn$$
    is not the analytification of any map of log schemes since the zero section cannot be pulled back to a non zero section. It is however the analytification of a virtual map given by the identity on both underlying spaces and $\mcl O_X(D) \to \mcl O_X(D)^{\otimes 1}$. 
\end{example}

\section{Topological Fulton--MacPherson and Kontsevich Spaces}\label{sec:topological constructions}
In this section we recall the topological Fulton--MacPherson spaces, $\topfm_n(\R^D)$, and the closely related Kontsevich spaces, $\kont_{D,n} \cong \topfm_n(\R^D)\big /\R_{>0}\ltimes \R^D$. 

\subsection{The Fulton--MacPherson Compactification of a Manifold}\label{subsec:topo Fulton--MacPherson}
Let $X$ be a smooth manifold, let $n$ be a positive integer and let $\psets{n}$, denote the set of subsets of $[n]$ with at lest $2$ elements. For a set $I\in \psets{n}$ let $\Delta_I$ denote the corresponding diagonal in $X^n$, i.e. $$\Delta_I = \{(x_1,\dots, x_n)\in X^n \vert\  x_i = x_j \ \forall\ i,j \in I\}.$$ By abuse of notation we will also use $\Delta_I$ to denote the (small) diagonal in $X^{I}$.

\begin{definition}
    The \emph{Fulton--MacPherson compactification}, $\topfm_n(X)$ is the closure of the image of the map 
    $$\conf_n(X) \into \prod_{I\in \psets{n}} \realbu{\Delta_I}X^I.$$
    We let $f_I\co \topfm_n(X) \to \realbu{\Delta_I}(X)$ denote the restriction of the corresponding projection to $\topfm_n(X)$ and we let $\rho\co \topfm_n(X) \to X^n$ denote the surjective extension of the embedding $\conf_n(X) \into X^n$ to $\topfm_n(X)$.
\end{definition}
\begin{remark}
    This definition is the one used by Axelrod and Singer in \cite{axelrodsinger1994chernsimons} but many sources use a different definition of the topological Fulton--MacPherson compactification introduced by Sinha \cite{sinha2004manifold}. These can easily be shown to be equivalent. 
\end{remark}

\begin{definition}
    The \emph{Kontsevich space} of $n$ points in dimension $d$, $\kont_{D,n}$, is defined as the fibre
    $\rho^{-1}((0,0,\dots,0)) \sub \topfm_n(\R^D)$.
\end{definition}
\begin{remark}
    It is more common to define $\kont_{D,n}$ as $\topfm_m(\R^D)\big / \R_{>0}\ltimes \R^D$ but one can show that the composition $\rho^{-1}((0,0,\dots,0)) \into \topfm_n(\R^D) \to \topfm_m(\R^D)\big / \R_{>0}\ltimes \R^D$ is an isomorphism and this equivalent definition will be more useful for our purposes.
\end{remark}
The fibre over the origin of $\realbu{\Delta_I}(\R^D)^I$ is $\S^{D(\abs{I}-1)-1}$. Thus, $f_I\co \topfm_n(\R^D) \to \realbu{\Delta_I}(\R^D)^I$ restricts to a map $\pi_I\co \kont_{D,n} \to \S^{D(\abs{I}-1)-1}$. These maps give a closed embedding
$$\kont_{D,n} \into \prod_{I\in \psets{n}}\S^{D(\abs{I}-1)-1},$$
i.e. an element $x\in \kont_{D,n}$ is uniquely determined by its components $x_I = \pi_I(x)$.

Note that while we have chosen to use $[n] = \{1,\dots,n\}$ as coordinate indices in the definitions above we could have used any (finite) index set $N$ instead. We can thus define $\topfm_N(X)$ and $\kont_{d,N}$ with maps $\rho\co \topfm_N(X)\to X^N$, $f_{I}\co \topfm_N(X) \to \realbu{\Delta_I}(X)^I$, and $\pi_{I}\co \kont_{D,N} \to \S^{D(\abs{I}-1)-1}$ in the analogous way. Here $I$ is a subset of $N$.
\subsection{The Fulton--MacPherson Operad}
For a fixed dimension $D$ the collection $\set{\kont_{D,n}}_{n\in \N}$ can be given the structure of a topological operad as follows.

The permutation action of $\Sigma_n$ on $\conf_n(\R^D)$ extends to a free action on $\topfm_n(\R^D)$ which gives an induced action on the fibre over the origin, $\kont_{D,n}$. This is the symmetry action of the operad.

Next, any surjection $\vp \co I\quot J$ of sets induces a monomorphism $(\R^D)^J \into (\R^D)^I$ such that the inverse image of $\Delta_I$ is $\Delta_J$. This gives a map 
$\realbu{\Delta_J}(\R^D)^J\to \realbu{\Delta_I}(\R^D)^I$ which restricts to a map of fibres over the origin 
$$g_\vp\co \S^{D(\abs{J}-1)-1} \to \S^{D(\abs{I}-1)-1}.$$
Let $q\co M\quot [n]$ be a surjection of finite sets.
By abuse of notation we will also let $q$ denote the restriction of this function $I\quot q(I)$ for any $I \sub M$. The above construction allows us to define a map 
$$\gamma\co \kont_{D,[n]} \times \prod_{i\in [n]} \kont_{D,q^{-1}(i)} \to \kont_{d,M}$$
by
$$\gamma(x,y^1,\dots,y^n)_I = \begin{cases}
    y^i_I & I \sub q^{-1}(i)\\
    g_q(x_{q(I)}) & \textnormal{else}
\end{cases}.$$
This defines the composition maps of the operad.
\begin{remark}
    We use the standard notation for an operad defined in terms of finite sets here. See for example \cite{markl2007operads} for an introduction to the notation.
\end{remark}

Finally, $\kont_{d,1}$ is the one point space so there is only one choice for the identity map, $\eta\co * \to \kont_{d,1}$. 
The operad axioms can be verified manually for this collection of maps. 
\begin{definition}
    We call the operad with spaces $\set{\kont_{D,n}}$ (for a fixed $D$) and the above composition, symmetry, and identity maps the \emph{Fulton--MacPherson operad in dimension $D$}, denoted $\tk_D$.
\end{definition}

By a result of Salvatore, the Fulton--MacPherson operad is weakly equivalent to the operad of little $d$-dimensional disks \cite[proposition 3.9]{salvatore1999configuration}. 

The action of $\SO(D)$ on $\R^D$ induces an action on $\conf_n(\R^D)$ which extends to $\topfm_n(\R^D)$. Since all rotations map the zero vector to itself this restricts to an action on the fibre $\kont_{D,n}$. One can check that this gives an action by $\SO(D)$ on the $\tk_D$ operad in the sense of \cite{salvatore2003framed}. Furthermore the weak equivalence $\tk_D \sim \ld_D$ can be shown to commute with this action which gives a weak equivalence between $\tk_D\rtimes \SO(D)$ and $\ld_D \rtimes \SO(D)$, the framed little disks operad.

With the notable exception of the case $D=2$, the framed little disks operad is, however, not the operad studied in this article. Instead we are interested in the following construction. 
When the dimension is an even number $D = 2d$ there is an embedding of topological groups 
$$\S^1 \cong U(1) \xhookrightarrow{\textnormal{diag.}} U(d)\into\SO(2d).$$
This induces an action of $\S^1$ on the $\tk_{2d}$ operad and we can form the semidirect product $\tk_{2d} \rtimes \S^1$. 

The main goal of this article is to construct a non-unital operad of log schemes whose Kato--Nakayama analytification is $\tk_{2d}\rtimes \S^1$ which is weakly equivalent to the $\S^1$-framed little disks operad.

\section{The Chen, Gibney, Krashen Operad}\label{sec: geometric operad}
Fix a base field $\k$ for the remainder of the article.
In this section we recall the definition of the moduli spaces of stable $n$-pointed rooted trees of $d$-dimensional projective spaces, $T_{d,n}$, and describe an operad of (smooth) $\k$-varieties with spaces $T_{d,n}$ for a fixed $d$ which generalizes the operad of pointed stable curves of genus $0$. For a more in depth description of the spaces $T_{d,n}$, including a definition of a rooted tree of projective spaces, see \cite{chen2009pointed}.

\subsection{Fulton--MacPherson Compactifications}\label{subsec: alg fm}
Let us recall some important aspects of the Fulton--MacPherson compactification \cite{fulton1994acompactification}. Let $X$ be a smooth $\k$-variety. For an integer $n$ and a subset $I \sub [n]$ with $\abs{I}\geq 2$ let $\mcl I_I$ denote the sheaf of ideals on $X^n$ corresponding to the $I$-diagonal, $\Delta_I \sub X^n$. Recall that we denote the set of subsets of $[n]$ with at least $2$ elements by $\psets{n}$.

Let $\rho\co \afm_n(X) \to X^n$ denote the Fulton--MacPherson compactification of $\conf_n(X)$. We call this a ``compactification'' of $\conf_n(X)$ since $\rho$ restricted to $\rho^{-1}(\conf_n(X))$ is an isomorphism and $\afm_n(X)$ is compact provided that $X$ is.
The remainder of this section will be dedicated to recalling some important results relating to the Fulton--MacPherson compactification. First, we just state the following facts, all of which can be found with proofs in \cite{fulton1994acompactification}.
\begin{itemize}
    \item The scheme $\afm_n(X)$ is a smooth variety.
    \item There is an ordering of the elements in $\psets{n}$, such that $\rho \co \afm_n(X) \to X^n$ is given by a sequence of blow-ups where we start with $X^n$ and at step $t$ of the sequence we blow-up in the dominant transform of the $t$th diagonal through all previous blow-ups.
    \item The closed subset $D(I) \sub \afm_n(X)$ given by the dominant transform of $\Delta(X)$ through all these blow-ups is a smooth, effective Cartier divisor. 
    \item The divisors $D(I)$ form a strict normal crossings divisor, i.e. any set of the divisors $D(I)$ meets transversely. 
    \item The intersection $D(I_1,\dots, I_k)\defeq D(I_1) \cap \dots \cap D(I_k)$ is non-empty if and only if the sets $I_1, \dots, I_k$ are \emph{nested} i.e. for each $i,j$ we have $I_i \sub I_j$, $I_j \sub I_i$ or $I_i\cap I_j = \emptyset$.
    \item We have $\afm_n(X) \setminus \conf_n(X) = \bigcup_{I \in\psets{n}} D(I)$.
    \item We have that $\rho^{-1}(\Delta(I)) = \bigcup_{I \sub I'} D(I')$, i.e. the pullback of the ideal inclusion $\mcl I_I \into \mcl O_{X^n}$ factors via a quotient
    $$\rho^* \mcl I_I \quot \prod_{I \sub I'} \mcl I_{D(I')} \cong \bigotimes_{I \sub I'} \mcl O_{\afm_n(X)}(-D(I')).$$
    \item The composition $\afm_n(X) \to X^n \to X^I$ factors as $\afm_n(X) \xra{f_I} \bu{\Delta_I} X^I \xra{\rho_I} X^I,$ where $\rho_I$ is the blow-down map.
\end{itemize}
\begin{remark}
    Recall that for a blow-up $\bu{Y}X \to X$ the \emph{dominant} transform of a closed subscheme $Z\sub X$ is defined as the strict transform of $Z$ if $Z\not\sub Y$ and the pullback of $Z$ to $\bu{Y}X$ if $Y\sub Z$.
\end{remark}
Note that by the last two points we have that if $E_{\Delta_I} \sub \bu{\Delta_I} X^I$ denotes the exceptional divisor then there is an isomorphism $$f_I^* \mcl O(E_{\Delta_I}) \cong \bigotimes_{I \sub I'} \mcl O_{\afm_n(X)}(D(I'))$$ 
which sends corresponding sections to each other. 

In addition to this geometric description of $\afm_n(X)$ there is also a functorial description which we will make great use of in this article.
\begin{definition}
    A \emph{screen} on a scheme $H$ is a map $f\co H \to X^n$ together with the following data
    \begin{itemize}
        \item an invertible quotient $f^* \mcl I_{I} \quot \mcl L_I$ for each $I\in \psets{n}$
        \item a map $\mcl L_I \to \mcl L_J$ for each $I \sub J$
    \end{itemize}
    such that the following diagram commutes for every $I \sub J$
    \begin{center}
        \begin{tikzcd}
            f^*\mcl I_{I} \arrow{r}\arrow{d}& \mcl L_{I}\arrow{d}\\
            f^*\mcl I_{J} \arrow{r} & \mcl L_{J}
        \end{tikzcd}
    \end{center}
    where $f^*\mcl I_I \to f^* \mcl I_J$ is the pullback of the corresponding inclusion. 

    We say a screen satisfies the $I$-vanishing property for some $I\in \psets{n}$ if $\textnormal{pr}_i\circ f\co H \to X = \textnormal{pr}_j\circ f\co H \to X$ for each $i,j\in I$ and, for every $J\in \psets{n}$, if $J\not \sub I$ and $\abs{I\cap J} \geq 2$ then $\mcl L_{I\cap J}\to \mcl L_J$ is the zero morphism.
\end{definition}
We can define a screen on $\afm_n(X)$ by taking $f=\rho$, $\mcl L_I = \bigotimes_{I \sub I'}\mcl O_{\afm_n(X)}(-D(I'))$ and letting the invertible quotients be the maps
$$\rho^*\mcl I_I \quot \bigotimes_{I \sub I'}\mcl O_{\afm_n(X)}(-D(I')).$$
The maps $\mcl L_{I} \to \mcl L_J$ here are induced by the ideal inclusions $\mcl O_{\afm_n(X)}(-D(I))\into \mcl O_{\afm_n(X)}$. We call this screen the \emph{universal screen} on $\afm_n(X)$.
\begin{definition}
    Let $\afmfun_n\co \schcat^{op} \to  \setcat$ be the functor which sends a scheme $H$ to the set of screens on $H$ up to isomorphism of the screen data. 

    Similarly let $\afmfun(I_1,\dots,I_k)$ denote the subfunctor sending $H$ to the set of screens on $H$ up to isomorphism which satisfy the $I_r$-vanishing property for each $1\leq r \leq k$.
\end{definition}
\begin{proposition}\label{prop: FM functor}
    The Fulton--MacPherson compactification $\afm_n(X)$ represents the functor $\afmfun_n$. Furthermore, the natural isomorphism $\hom(-,\afm_n(X)) \to \afmfun_n$ is given by sending the element $f\co H \to \afm_n(X)$ to the pullback to $H$ of the universal screen on $\afm_n(X)$ described above.

    Similarly $D(I_1,\dots, I_k) \sub \afm_n(X)$ represents $\afmfun(I_1,\dots, I_k)$. The natural isomorphism is given in the same way here as above.
\end{proposition}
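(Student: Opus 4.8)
The plan is to prove representability by exploiting the iterated blow-up construction of $\afm_n(X)$ recalled above, together with the universal property of blowing up. Any morphism $g\co H \to \afm_n(X)$ pulls back the universal screen to data on $H$, and since an invertible quotient of a coherent sheaf is determined by its kernel (so the ``up to isomorphism'' in the definition of $\afmfun_n$ is automatic), one checks directly that this pullback is again a screen and that the assignment is natural. This defines the transformation $\hom(-,\afm_n(X)) \to \afmfun_n$, and the substance of the argument is to construct an inverse, i.e. to show that a screen on $H$ determines a \emph{unique} morphism $H \to \afm_n(X)$ over $X^n$ inducing it.

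First I would fix the ordering $I_1,\dots,I_N$ of $\psets{n}$ compatible with the blow-up sequence, so that $\afm_n(X)$ is obtained from $X^n$ by successively blowing up the dominant transform of $\Delta_{I_t}$. Given a screen $(f,\{\mcl L_I\},\{\mcl L_I \to \mcl L_J\})$ on $H$, I would construct the lift inductively: assuming $f$ has been factored through the partial blow-up at stage $t-1$, the invertible quotient of (the dominant transform of) $f^*\mcl I_{I_t}$ coming from the screen provides, by the universal property of the blow-up, a unique factorization through the next blow-up. Uniqueness at every stage assembles into uniqueness of the full lift, and one then verifies that pulling back the universal screen along this lift recovers the original screen, so the two assignments are mutually inverse and natural.

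The main obstacle is that at stage $t$ one blows up not $\Delta_{I_t}$ itself but its dominant transform through the earlier blow-ups, so the relevant datum is an invertible quotient of the \emph{dominant transform} ideal rather than of $f^*\mcl I_{I_t}$ directly. I expect the heart of the proof to be the bookkeeping converting the screen data into such a quotient. This is governed by the factorization $\rho^*\mcl I_I \onto \bigotimes_{I \sub I'}\mcl O_{\afm_n(X)}(-D(I'))$ recalled above: the compatibility maps $\mcl L_{I}\to \mcl L_J$ for $I\sub J$ are precisely the ones induced by the ideal inclusions $\mcl O(-D(I'))\into\mcl O$, and they record how $\mcl I_{I_t}$ acquires contributions from the exceptional divisors $D(I')$ with $I_t\sub I'$ (the diagonals nested inside $\Delta_{I_t}$ blown up earlier). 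Dividing out these contributions via the compatibility maps yields the required invertible quotient of the dominant transform ideal, and this is exactly the compatibility making the factorization well defined at each stage.

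Finally, for the subfunctor statement I would identify, under the representability isomorphism just established, the screens satisfying the $I_r$-vanishing property with the morphisms factoring through $D(I_1,\dots,I_k)$. The condition $\pr_i\circ f = \pr_j\circ f$ for $i,j\in I_r$ forces $f$ to land in $\bigcap_r \Delta_{I_r}$, while the vanishing of $\mcl L_{I_r\cap J}\to\mcl L_J$ for $J\not\sub I_r$ with $\abs{I_r\cap J}\geq 2$ is the condition cutting out the divisor $D(I_r)$ among the lifts; intersecting over $r$ gives $D(I_1,\dots,I_k)$. In the non-nested case no screen satisfies all the required vanishing properties and the corresponding intersection of divisors is empty, so both sides agree there as well.
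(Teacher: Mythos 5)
The paper's own ``proof'' of this proposition is a one-line citation to Theorem~4 of \cite{fulton1994acompactification} (Fulton--MacPherson prove that theorem by induction on the number of points, via their construction of $X[n+1]$ as a sequence of blow-ups of $X[n]\times X$ --- a different induction from yours). So your sketch must stand on its own, and as written it has a genuine gap at its crux: the appeal to ``the universal property of the blow-up.'' The classical universal property produces a factorization of $f\co H\to Z$ through $\bu{Y}Z$ only when the inverse image ideal sheaf $f^{-1}\mcl I_Y\cdot\mcl O_H$ is invertible \emph{as an ideal of} $\mcl O_H$. A screen gives something strictly weaker: an abstract invertible quotient $f^*\mcl I_Y\onto\mcl L$, with no compatibility with any map to $\mcl O_H$. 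The two notions diverge exactly where the theorem has content: if $H$ is a point mapping into a diagonal $\Delta_I$, then the inverse image ideal sheaf is zero, so the classical universal property can never produce a lift, yet screens supported over such points are precisely what parameterize the boundary of $\afm_n(X)$. What rescues the single-blow-up step is a finer fact you never invoke: for a \emph{regular} embedding $Y\sub Z$ the ideal $\mcl I_Y$ is of linear type, so $\bu{Y}Z\cong\textnormal{Proj }\textnormal{Sym}(\mcl I_Y)=\P(\mcl I_Y)$, and maps $H\to\bu{Y}Z$ over $Z$ then correspond to arbitrary invertible quotients of $f^*\mcl I_Y$. Without this (together with the fact, which itself needs the ordering results, that each dominant transform $\tilde\Delta_{I_t}$ is smooth, hence regularly embedded, in the partial blow-up), your induction never gets started.

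Second, the step you defer as ``bookkeeping'' is actually the heart of the proof. At stage $t$ the screen hands you a quotient of $f^*\mcl I_{I_t}$, but the refined universal property above needs an invertible quotient of $g_{t-1}^*\mcl I_{\tilde\Delta_{I_t}}$, where $g_{t-1}\co H\to \bu{\tilde\Delta_{I_{t-1}}}\cdots\bu{\tilde\Delta_{I_1}}X^n$ is the lift already constructed. These are related by a surjection $f^*\mcl I_{I_t}\onto g_{t-1}^*\bigl(\mcl I_{\tilde\Delta_{I_t}}\otimes\bigotimes\mcl O(-E_{I'})\bigr)$, the product running over earlier-blown-up $I'\supsetneq I_t$, and the substance is to show that the screen quotient $f^*\mcl I_{I_t}\onto\mcl L_{I_t}$ annihilates the kernel of this surjection, so that it descends and can be untwisted into the required quotient. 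That this follows from the screen axioms --- in which the compatibility maps $\mcl L_{I_t}\to\mcl L_{I'}$ are allowed to be zero, and are zero precisely over the deeper strata --- is not obvious and is not addressed; note also that the factorization $\rho^*\mcl I_I\onto\bigotimes_{I\sub I'}\mcl O(-D(I'))$ you cite as governing this step lives on $\afm_n(X)$ itself, so using it on $H$ presupposes the very lift you are trying to build. This descent-and-untwist argument is exactly where Fulton--MacPherson's proof spends its effort; a complete write-up along your lines would have to supply it, while the concluding points of your sketch (quotients determined by their kernels, and the identification of the vanishing conditions with the divisors $D(I_r)$) are fine.
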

\begin{proof}
    This is theorem 4 in \cite{fulton1994acompactification}.
\end{proof}
\subsection{Pointed Rooted Trees of Projective Spaces}\label{subsec: pointed trees}
Chen, Gibney, and Krashen \cite{chen2009pointed} introduced \emph{stable pointed rooted trees of projective spaces} as a higher-dimensional analogue of stable pointed rational curves, and showed that $T_{d,n}$ (defined in \cref{subsec: Tdn def and properties}) is the moduli space of stable rooted $n$-pointed trees of 
$d$-dimensional projective spaces.
Here we recall the definition and describe the morphisms in the operad $\gk_d$ (defined in \cref{subsec: Tdn def and properties}), with $\gk_d(n) =T_{d,n}$ in terms of natural operations on such trees. The discussion is intended to convey geometric intuition for \cref{subsec: Tdn def and properties}, and we therefore suppress most proofs and technical details.

\begin{definition}
A \emph{rooted tree of $d$-dimensional projective spaces} is a connected, reduced, projective scheme $T$ with a closed embedding $r_0\co \P^{d-1} \into T$, called the \emph{root hyperplane}, obtained by the following finite inductive process:
\begin{enumerate}
    \item  
    Start with $T \cong \mathbb{P}^d$ and choose a hyperplane embedding
    $$r_0 \co \mathbb{P}^{d-1} \hookrightarrow V.$$ 
    This embedding is the \emph{root hyperplane}.
    
    \item
    Given a tree of $d$-dimensional projective spaces $T$ produced in a previous step, choose a smooth point $x \in T$ disjoint from the singular locus and the root hyperplane $r_0\co \P^{d-1} \into T$.  
    The next iteration $T'$ is defined by forming the blow-up $\bu{x} T$ and gluing along its exceptional divisor $\mathbb{P}^{d-1}$ a new copy of $\mathbb{P}^d$ along some hyperplane
    $\mathbb{P}^{d-1} \sub \mathbb{P}^d.$ Since $x$ was chosen to be disjoint from the root hyperplane $r_0$ induces an embedding $r_0'\co \P^{d-1} \into T'$, which we take to be the new root hyperplane.
    Repeat finitely many times.
\end{enumerate}

An \emph{$n$-pointed rooted tree} is such a rooted tree of projective spaces $T$ together with $n$ distinct  points
$$p_1, \dots, p_n : \spec k \hookrightarrow V$$
lying outside the singular locus $\sing(T)$ and outside the root hyperplane.
\end{definition}
\begin{remark}
    When $d=1$, blowing up in a smooth point does nothing and hyperplanes are points so an $n$-pointed rooted tree of $1$-dimensional projective spaces is just an $n+1$-pointed genus $0$ curve with at worst nodal singularities.
\end{remark}

Decompose $T=\bigcup_i T_i$ into irreducible components and let $T_0$ denote the \emph{root component}, i.e. the component containing the root hyperplane.
Each $T_i$ is isomorphic to $\mathbb{P}^d$ blown up at a closed subscheme $Q_i$ consisting of finitely many disjoint $\k$-points, hence admits a canonical
\emph{blow-down map}
$$b_i: T_i \longrightarrow \mathbb{P}^d.$$
Furthermore, there is an embedding $r_i\co \P^{d-1}\into T_i$ for each $T_i$ such that $b_i\circ r_i\co \P^{d-1} \to \P^d$ is the inclusion of a hyperplane. For the root component $r_0$ is the root hyperplane and for other components $r_i$ is the embedding of the hyperplane along which the component was attached.
Note that the disjoint union of $r_i$ and the exceptional divisor $E_i$ of the blow-up $T_i \to \P^d$ is precisely the intersection $T_i\cap \sing(T)$. 

\begin{definition}
With notation as above, the $n$-pointed rooted tree $T$ is \emph{stable} if, for every component $T_i$,
there are at least two corresponding \emph{special points}, meaning the marked points $\{p_1,\dots, p_n\}\cap T_i$ and the blown-up points $Q_i$.
\end{definition}
\begin{remark}
    This condition is equivalent to $T$ having no non-trivial automorphisms restricting to the identity on the marked points and the root hyperplane. 
\end{remark}
\begin{remark}
When $d=1$, the above condition is equivalent to there being three special points on each component if we think of the corresponding hyperplanes as special points as well. This is the usual stability condition for pointed genus $0$ curves with at worst nodal singularities.
\end{remark}

By Theorem 3.4.4. in \cite{chen2009pointed} the smooth varieties $T_{d,n}$ are moduli spaces of stable $n$-pointed rooted trees of $d$-dimensional projective spaces. There are no stable $1$-pointed trees but in this case we define $T_{d,1}= \spec \k$. As mentioned before, a hyperplane in $\P^1$ is just a point and thus we have $T_{1,n} \cong \overline{M}_{0,n+1}$, the moduli space of stable $n+1$-pointed rational curves where the additional point is the root hyperplane.

Each variety $T_{d,n}$ comes with a canonical action by the symmetric group $\Sigma_n$ corresponding to permutation of the marked points. Furthermore, for each $1\leq i \leq n$ there is a morphism
$$T_{d,n}\times T_{d,m}\xra{\circ_i} T_{d,n+m-1}$$
which takes an $n$-pointed tree $T$ and an $m$-pointed tree $T'$ and maps them to the $n+m-1$ pointed tree formed by blowing-up $T$ in the $i$th marked point and gluing the resulting exceptional divisor to the root hyperplane in $T'$. It is clear from definitions that these maps give rise to an operad of varieties, which we denote $\gk_d$, with spaces $\gk_d(n) = T_{d,n}$. In the case $d=1$ this construction is classical, see for example \cite{getzler98modular}. 

\subsection{Definition and Properties of $T_{d,n}$}\label{subsec: Tdn def and properties}
In this section we recall the definition and some important properties of the $T_{d,n}$ spaces from \cite{chen2009pointed}.
We also introduce a functor of points for these spaces and use this to define the operad $\gk_d$ with spaces $\gk_d(n) = T_{d,n}$ which was informally described in \cref{subsec: pointed trees}. We will not explicitly prove that $\gk_d$ is isomorphic to the operad described in \cref{subsec: pointed trees} but it is not hard to show using Theorem 3.4.4. in \cite{chen2009pointed}.

If $\textnormal{pr}_i\co X^n \to X$ denotes the projection to the $i$th component it is clear from definitions that the composition 
$$D([n])\into \afm_n(X) \xra{\rho} X^n \xra{\textnormal{pr}_i} X$$
is independent of the integer $i$. We denote this morphism by $\pi \co D([n]) \to X$.
In \cite{chen2009pointed}, Chen, Gibney, and Krashen show that for any $\k$-valued point $x\in X$ the fibre $\pi^{-1}(x)$, depends only on $n$ and $\dim X$ up to (non-canonical) isomorphism provided that $X$ is smooth. We define $T_{d,n}$ as $\pi^{-1}(0)$ for the map $\pi\co D([n]) \to \A^d$ but by the above we could equally well have taken any other smooth variety instead of $X= \A^d$.
\begin{remark}
    Just like the topological case notice that while we have up until this point indexed coordinates with $[n]$ we are free to use any finite index set $I$ and $\afm_I(X)$ and $T_{d,I}$ together with all related maps and divisors are defined in the analogous way. 
\end{remark}

The variety $T_{d,n}$ is smooth and comes equipped with a smooth closed subscheme $T_{d,n}(I)$ for every non empty $I \sub [n]$. For $I \in\psets{n}$, i.e. $I\neq [n]$ and $\abs{I}\geq 2$, we define $T_{d,n}(I)$ as the pullback of
$D(I)\sub \afm_n(X)$ to $T_{d,n}$. For other $I$ (i.e. $I$ is $[n]$ or a one point set) we take $T_{d,n}(I) \defeq T_{d,n}$. Similarly, for a collection of (distinct) subsets $I_1,\dots, I_k \sub [n]$ we define $T_{d,n}(I_1,\dots, I_k) \defeq \bigcap_{i}T_{d,n}(I_i)$. For $I\neq [n]$ and $\abs{I}\geq 2$ the subscheme $T_{d,n}(I)$ is a smooth divisor of $T_{d,n}$ and the union of all these divisors is a normal crossings divisor. Additionally, the intersection $T_{d,n}(I_1,\dots, I_k)$ is non empty if and only if $(I_1,\dots, I_k)$ is \emph{nested}. All this essentially follows from the analogous results for $\afm_n(X)$ recalled in \cref{subsec: alg fm}. Details can be found in \cite{chen2009pointed}.
\begin{remark}
    In terms of pointed trees $T_{d,n}(I)$, where $I\in \psets{[n]}$, parameterizes trees which have two (not necessarily irreducible) components such that the component which does \emph{not} contain the root hyperplane contains exactly those marked points indexed by $I$.
\end{remark}

The first goal of this section will be to give a functor of points description of the schemes $T_{d,n}(I_1,\dots, I_k)$. For this we introduce the following. 
\begin{definition}\label{def: diff sheaf}
    For a scheme $H$, an integer $d$, and a finite, non-empty, set $I$ with at least two element we let $\mcl F^{H,d}_I$ denote the quasi-coherent sheaf of modules defined by
    $$\mcl F^{H,d}_I \defeq \left\langle \{t_{ij}^k\}_{i,j \in I}^{1\leq k\leq d}\right \rangle \Bigm / \sim.$$
    Here $\left\langle \{t_{ij}^k\}_{i,j \in I}^{1\leq k\leq d}\right \rangle$ denotes the free module with generators $t_{ij}^k$ and $\sim$ is generated by sections of the form $t_{ij}^k +t_{jl}^k-t_{il}^k$. We will also denote this by $\mcl F_I^d$ or $\mcl F_I$ if $H$ or $H,d$ are clear from context. 
    
    For every function of sets $\vp\co I\to  J$ we let $F_{\vp}\co \mcl F_{I}^{H,d} \into \mcl F_J^{H,d}$ denote the sheaf morphism defined by sending
    $t_{ij}^k \mapsto t_{\vp(i)\vp(j)}^k.$
\end{definition}
\begin{remark}
    Note that $\mcl F^{H,d}_I$ is free of rank $d(\abs{I}-1)$. Furthermore, note that the sheaves $\mcl F^{H,d}_I$ and the maps $F_{\vp}$ are pullbacks of corresponding vector bundles with morphisms on $\spec \k$. For all $I \in\psets{n}$ these are in turn isomorphic to the pullbacks of the ideal sheaves $\mcl I_{\Delta_I}$ on $(\A^d)^n \cong \spec \k[x^k_i]_{1\leq i \leq n}^{ 1\leq k \leq d}$ via the inclusion of the origin $\spec \k \into (\A^d)^n$. Here we identify the section $t_{ij}^k$ of $\mcl F^{d}_I$ with the pullback of the global section $x_i^k-x^k_j \in I_{\Delta_I} \sub \k[x^k_i]$. Additionally, if $I\sub J \sub [n]$ the map $F_{I\into J}$ of sheaves on $\spec \k$ is the pullback of the ideal inclusion map $\mcl I_{\Delta_I} \into \mcl I_{\Delta_J}$. 
\end{remark}
\begin{definition}
    Let $\tfun{d,n}\co Sch^{op}\to Set$ be the functor which sends $H$ to the set of collections of invertible quotients, 
    $$\set{\phi_I\co \mcl F^{H,d}_I \quot \mcl L_I}_{I \in \psets{n}},$$
    such that for every $I \sub J$ there is a morphism $\mcl L_I \to \mcl L_J$ making the following diagram commute.
    \begin{center}
        \begin{tikzcd}
            \mcl F_{I}^{H,d} \arrow{r} \arrow[d,"F_{I\sub J}"] & \mcl L_{I} \arrow{d} \\
            \mcl F_{J}^{H,d} \arrow{r}& \mcl L_{J}
        \end{tikzcd}
    \end{center}
    Such a collection will sometimes be referred to as a \emph{simple screen}.
    Similarly let $\tfun{d,n}(I_1,\dots, I_k)$ denote the subfunctor of $\tfun{d,n}$ with the added condition that for every $I_r$, the simple screens satisfy the \emph{$I_r$-vanishing property}, i.e. if $J\nsubseteq I_r$ and $\vert J\cap I_r\vert \geq 2$ then $\mcl L_{J\cap I_r} \to \mcl L_J$ is the zero morphism. 
\end{definition}
\begin{remark}
    Note that $\tfun{d,n}([n]) = \tfun{d,n}(\set{i}) = \tfun{d,n}$.
\end{remark}
There is a natural simple screen on $T_{d,n}$ which can be constructed as follows. Let $i\co T_{d,n}\into \afm_n(\A^d)$ denote the inclusion of the fibre over the origin in $(\A^d)^n$. The ideal sheaves $\mcl I_I$ pull back to $i^* \mcl I_I \cong \mcl F_I^d$ and for $I\sub J$ the inclusion $\mcl I_I \into \mcl I_J$ pulls back to the map $F_{I\sub J}\co \mcl F_I^d\to \mcl F_J^d$. Thus, the universal screen on $\afm_n(\A^d)$ pulls back to a simple screen on $T_{d,n}$ via $i$. We will refer to this as the \emph{universal simple screen} on $T_{d,n}$. Let $\mcl F_I\quot \mcl M_I^{d,n}$ denote the invertible quotient corresponding to $I \in\psets{n}$ of the universal simple screen on $T_{d,n}$. We will also denote this by $\mcl M_I$ if $d,n$ are clear from context.
\begin{lemma}\label{lemma: M_I is tensor prod of O(I') duals}
    Let $i\co T_{d,n} \to \afm_n(\A^d)$ denote the inclusion of the fibre over the origin. There are unique isomorphisms
    $$\bigotimes_{I \sub I'} i^* \mcl O(-D(I')) \xra{\cong} \mcl M_I$$
    such that for $I \sub R$ the following diagram commutes
    \begin{center}
        \begin{tikzcd}
            \bigotimes_{R\sub R'} i^* \mcl O(-D(R')) \arrow[r, "\cong"]\arrow[d,"\bigotimes_{S\not \sub R'} i^*t_{R'}" ] & \mcl M_R\arrow[d]\\
            \bigotimes_{I \sub I'} i^* \mcl O(-D(I))\arrow[r, "\cong"] & \mcl M_I
        \end{tikzcd}
    \end{center}
    where $t_{R'}\co O(-D(R')) \into \mcl O$ denotes the ideal inclusion.
\end{lemma}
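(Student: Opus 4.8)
The plan is to observe that, by construction, $\mcl M_I$ is precisely the pullback $i^*\mcl L_I$ of the line bundle $\mcl L_I = \bigotimes_{I \sub I'}\mcl O(-D(I'))$ of the universal screen on $\afm_n(\A^d)$, equipped with the quotient obtained by pulling back the universal quotient $\rho^*\mcl I_I \quot \mcl L_I$ along $i$ and applying the identification $i^*\rho^*\mcl I_I \cong \mcl F_I$ from the remark after \cref{def: diff sheaf}. So the whole statement should reduce to monoidality of the functor $i^*$.

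First I would take the isomorphism of the statement to be the canonical comparison map
$$\bigotimes_{I \sub I'} i^*\mcl O(-D(I')) \xra{\cong} i^*\!\Big(\bigotimes_{I \sub I'}\mcl O(-D(I'))\Big) = \mcl M_I,$$
expressing that $i^*$ commutes with tensor products; this settles existence. By construction this isomorphism carries the pulled-back universal quotient $\mcl F_I \cong i^*\rho^*\mcl I_I \quot i^*\mcl L_I$ to the quotient $\phi_I$ defining $\mcl M_I$, so it is an isomorphism of invertible quotients of $\mcl F_I$, not merely of line bundles.

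For the compatibility in the displayed square I would use that, for $I \sub R$, the transition map between $\mcl M_R$ and $\mcl M_I$ is by definition the pullback along $i$ of the universal transition $\mcl L_I \to \mcl L_R$, and that the latter is the tensor product of the ideal inclusions $t_{I'}\co \mcl O(-D(I'))\into \mcl O$ taken over the ``extra'' factors, namely those $I'$ with $I \sub I'$ but $R \not\sub I'$. Since $i^*$ is monoidal and sends each $t_{I'}$ to $i^*t_{I'}$, the canonical isomorphisms intertwine this transition with $\bigotimes i^*t_{I'}$ on the extra factors, which is exactly the vertical arrow in the diagram; hence the square commutes.

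For uniqueness I would argue by rigidity: two families of isomorphisms making all transition squares commute differ by invertible global functions $u_I$ on $T_{d,n}$, and commutativity forces $u_I = u_R$ whenever they are joined by a (nonzero) transition map. Since $T_{d,n}$ is an integral variety and the poset $\psets{n}$ is connected, all the $u_I$ agree with a single global unit; remembering that the isomorphisms must also respect the quotients $\phi_I$ then forces this unit to be $1$. The only genuine work I anticipate is the combinatorial bookkeeping of the tensor factors and the direction of the transition maps, together with making the uniqueness clause precise (with the transition squares alone the family is pinned down only up to a global scalar, which is why one must keep track of the quotient presentation); the geometric content is entirely carried by the universal screen recalled in \cref{subsec: alg fm}.
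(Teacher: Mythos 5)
Your proposal is correct and follows essentially the same route as the paper, whose entire proof is the single line ``This follows from the analogous result for the universal screen on $\afm_n(X)$'' --- i.e.\ exactly your observation that $\mcl M_I$ is by definition $i^*\mcl L_I$ with $\mcl L_I = \bigotimes_{I\sub I'}\mcl O(-D(I'))$ and the transitions the pullbacks of the tensor-of-ideal-inclusions maps, so that everything reduces to monoidality of $i^*$. If anything, your treatment of uniqueness is more careful than the paper's: you rightly note that the transition squares alone determine the family only up to a single global unit (the poset of subsets being connected through $[n]$, and the transitions being nonzero maps of line bundles on the integral variety $T_{d,n}$), so the quotient presentation by $\mcl F_I$ must be retained for the uniqueness clause to hold on the nose --- a point the paper's one-line proof glosses over.
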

\begin{proof}
    This follows from the analogous result for the universal screen on $\afm_n(X)$.
\end{proof}

\begin{proposition}\label{prop: T_{d,n} functor}
    The functor $\tfun{d,n}$ is represented by $T_{d,n}$ and similarly $\tfun{d,n}(I_1,\dots, I_k)$ is represented by $T_{d,n}(I_1,\dots, I_k)$. Furthermore, the natural isomorphism $\hom(-,T_{d,n}) \to \tfun{d,n}$ is given by sending a map $f\co H \to T_{d,n}$ to the pullback of the universal simple screen on $T_{d,n}$ to $H$. The analogous result holds for $\hom(-,T_{d,n}(I_1,\dots,I_k)) \to \tfun{d,n}(I_1,\dots,I_k)$.
\end{proposition}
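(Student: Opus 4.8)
The plan is to deduce everything from the representability of $\afmfun_n$ by $\afm_n(\A^d)$ in \cref{prop: FM functor}, by realizing $T_{d,n}$ and the subschemes $T_{d,n}(I_1,\dots,I_k)$ as fibres and intersections inside $\afm_n(\A^d)$ and then translating screens into simple screens. Recall that $T_{d,n}=\pi^{-1}(0)$ for $\pi\co D([n])\to\A^d$, so $T_{d,n}$ is the fibre product $D([n])\times_{\A^d}\spec\k$ formed using the origin $\spec\k\to\A^d$. By Yoneda and the representability of $\afmfun([n])$ by $D([n])$, a map $H\to T_{d,n}$ is the same datum as a screen on $H$ satisfying the $[n]$-vanishing property whose underlying map $f\co H\to(\A^d)^n$ has $\pi\circ f=0$, i.e.\ $\pr_i\circ f$ equal to the constant map to the origin for every $i$.

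The heart of the argument is to identify such screens with simple screens. First I would observe that the $[n]$-vanishing property (whose second clause is vacuous, since every $J\in\psets{n}$ lies in $[n]$) forces $f$ to factor through the small diagonal $\Delta_{[n]}$, while $\pi\circ f=0$ pins this down to the constant origin map $c\co H\to(\A^d)^n$; conversely $c$ satisfies $[n]$-vanishing automatically. For this fixed underlying map $c$, the remark following \cref{def: diff sheaf} supplies canonical isomorphisms $c^*\mcl I_{\Delta_I}\cong\mcl F^{H,d}_I$ identifying the pulled-back inclusions $c^*(\mcl I_{\Delta_I}\to\mcl I_{\Delta_J})$ with the maps $F_{I\sub J}$. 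Transporting the screen data along these isomorphisms turns a screen with underlying map $c$ into a family of invertible quotients $\mcl F^{H,d}_I\quot\mcl L_I$ together with compatible transition maps $\mcl L_I\to\mcl L_J$, that is, exactly a simple screen. Since the isomorphisms $c^*\mcl I_{\Delta_I}\cong\mcl F^{H,d}_I$ are natural in $H$, this yields a natural isomorphism $\hom(-,T_{d,n})\cong\tfun{d,n}$. To match it with the claimed description I would note that the universal simple screen was defined as the pullback along $i\co T_{d,n}\into\afm_n(\A^d)$ of the universal screen, so the natural isomorphism of \cref{prop: FM functor} restricted to $T_{d,n}$ sends $f$ to the pullback of the universal simple screen, as required; \cref{lemma: M_I is tensor prod of O(I') duals} records the resulting line bundles $\mcl M_I$.

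For the subfunctors I would repeat the argument with the intersection $T_{d,n}(I_1,\dots,I_k)=D(I_1,\dots,I_k,[n])\cap\pi^{-1}(0)$, the fibre over the origin of the restriction of $\pi$ to $D(I_1,\dots,I_k,[n])$. Since $D(I_1,\dots,I_k,[n])$ represents $\afmfun(I_1,\dots,I_k,[n])$ by \cref{prop: FM functor}, the same translation shows that a map $H\to T_{d,n}(I_1,\dots,I_k)$ corresponds to a simple screen satisfying each $I_r$-vanishing property. Here the only surviving clause of the screen-theoretic $I_r$-vanishing is precisely the clause on $\mcl L_{I_r\cap J}\to\mcl L_J$ defining $I_r$-vanishing for simple screens, the diagonal clause $\pr_i\circ c=\pr_j\circ c$ being automatic for the constant map $c$.

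I expect the main obstacle to be the bookkeeping in the second paragraph: verifying that the canonical isomorphisms $c^*\mcl I_{\Delta_I}\cong\mcl F^{H,d}_I$ are compatible with all the transition maps and commuting squares (so that screens with underlying map $c$ correspond bijectively to simple screens), that this correspondence is natural in $H$, and that the two notions of $I_r$-vanishing match up exactly. All of this is essentially contained in the remark after \cref{def: diff sheaf} together with the observation that the diagonal clause is automatic, but it must be carried out with care.
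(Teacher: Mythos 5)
Your proposal is correct and follows essentially the same route as the paper: realize $T_{d,n}$ (and $T_{d,n}(I_1,\dots,I_k)$) as the fibre of $D([n])$ (resp.\ $D(I_1,\dots,I_k,[n])$) over the origin, invoke the representability results of \cref{prop: FM functor}, and translate screens with constant underlying map into simple screens via the identification $i_0^*\mcl I_I \cong \mcl F_I$ from the remark after \cref{def: diff sheaf}. The paper compresses all of this into a two-line argument, so your write-up simply makes explicit the bookkeeping (vacuity of the second $[n]$-vanishing clause, matching of the two $I_r$-vanishing notions, naturality) that the paper leaves implicit.
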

\begin{proof}
    First note that if $i_0\co \spec \k \to (\A^d)^n$ denotes the inclusion of the origin then we have seen that $i_0^*\mcl I_I = \mcl F_I$ for every $I \in\psets{n}$, where $\mcl I_I$ is the ideal sheaf corresponding to the $I$-diagonal in $(\A^d)^n$.
    Since 
    $$T_{d,n} = \pi^{-1}(0) = D([n])\times_{\A^d} \spec \k \cong D([n])\times_{(\A^d)^n} \spec \k$$ 
    the result follows immediately from \cref{prop: FM functor}. The same argument also gives the $T_{d,n}(I_1,\dots, I_k)$ case.
\end{proof}

This functorial description simplifies the proof of many of the following results greatly.
Let $n$ be a positive integer, and let $q\co [n]\quot M$ be a surjection of sets. We also let $q$ to denote the restriction of this map $J \to q(J)$ for any $J\sub M$.

\begin{proposition}\label{prop:comp-morph-iso-onto-closed-subscheme}
    The composition map described in \cref{subsec: pointed trees} is well defined and gives an isomorphism
    $$T_{d,n}\times \prod_{r=1}^n T_{d,q^{-1}(r)} \cong T_{d,M}(q^{-1}(1),\dots, q^{-1}(n)).$$
\end{proposition}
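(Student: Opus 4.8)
The plan is to prove the statement functorially, via the representability results of \cref{prop: T_{d,n} functor} and the Yoneda lemma. Write $N_r \defeq q^{-1}(r)$ for the fibres of $q$; these are disjoint, hence nested, so $T_{d,M}(N_1,\dots,N_n)$ is defined and represents $\tfun{d,M}(N_1,\dots,N_n)$, while the left-hand product represents $\tfun{d,n}\times \prod_{r=1}^{n} \tfun{d,N_r}$. It therefore suffices to produce a natural isomorphism between these two functors; by construction this will be the screen-level incarnation of the tree-gluing map of \cref{subsec: pointed trees}, which simultaneously shows that map is well defined (it factors through the closed stratum) and that it is an isomorphism. For $I\sub J\sub M$ I will use that $q\vert_J$ restricts to $q\vert_I$, giving the functoriality $F_{q\vert_J}\circ F_{I\sub J} = F_{q(I)\sub q(J)}\circ F_{q\vert_I}$, and that $t_{aa}^{k}=0$ in $\mcl F$ (set $i=j=l$ in the defining relation).

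First I would construct the forward transformation $\Phi$. Given an ``outer'' simple screen $\set{\psi_K\co \mcl F_K \quot \mcl N_K}_{K\in \psets{n}}$ and ``inner'' simple screens $\set{\chi^r_S\co \mcl F_S \quot \mcl P^r_S}_{S\in\psets{N_r}}$, define $\set{\phi_I}_{I\in \psets{M}}$ by $\phi_I = \chi^r_I$ (so $\mcl L_I = \mcl P^r_I$) when $I\sub N_r$, and $\phi_I = \psi_{q(I)}\circ F_{q\vert_I}$ (so $\mcl L_I = \mcl N_{q(I)}$) when $\abs{q(I)}\geq 2$; each is an invertible quotient, the latter because $F_{q\vert_I}$ is a surjection. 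I then check $\set{\phi_I}$ is a screen lying in $\tfun{d,M}(N_1,\dots,N_n)$. The only delicate transitions are those $I\sub J$ with $I\sub N_r$ and $\abs{q(J)}\geq 2$: here $q\vert_J$ is constant on $I$, so $F_{q\vert_J}\circ F_{I\sub J}=0$ and thus $\phi_J\circ F_{I\sub J}=0$, which provides the (zero) transition map and, taking $I=J\cap N_r$, the $N_r$-vanishing property. The remaining cases (both sets inside one fibre, or both spread) follow from the inner, respectively outer, screen being a screen, using the displayed functoriality of $F$.

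Next I would fix a section $\sigma\co [n]\to M$ of $q$ and define $\Psi$: from $\set{\phi_I}$ recover the inner screens by restriction, $\chi^r_S \defeq \phi_S$ for $S\in \psets{N_r}$, and the outer screen by $\psi_K \defeq \phi_{\sigma(K)}\circ F_{\sigma\vert_K}$, using that $F_{\sigma\vert_K}$ is an isomorphism. That $\Psi\circ \Phi = \id$ is immediate from $q\circ\sigma=\id$ and functoriality of $F$. The substance is $\Phi\circ\Psi = \id$, which I expect to be the main obstacle, since a priori the recovered outer screen depends on $\sigma$ and need not obviously reproduce $\phi_I$ on a spread set $I$. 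To handle it I would prove the claim: if $A\sub B$ in $\psets{M}$ satisfy $q(A)=q(B)$ with $\abs{q(A)}\geq 2$, then the transition $\mcl L_A\to \mcl L_B$ is an isomorphism. As a surjection of invertible sheaves is an isomorphism, it suffices that $\phi_B\circ F_{A\sub B}$ be surjective; writing an arbitrary generator $t_{bb'}^{k}$ of $\mcl F_B$ as $t_{ba}^{k}+t_{aa'}^{k}+t_{a'b'}^{k}$ with $a,a'\in A$ chosen in the fibres of $b,b'$, the cocycle relation and the $N_r$-vanishing property force $\phi_B(t_{ba}^{k})=\phi_B(t_{a'b'}^{k})=0$, so $\phi_B(\mcl F_A)=\mcl L_B$.

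Granting the claim, fix a spread $I$ and put $I''\defeq\sigma(q(I))$ and $L\defeq I\cup I''$, so that $q(I)=q(I'')=q(L)$ has size $\geq 2$ and, by the claim, both $\mcl L_I\to\mcl L_L$ and $\mcl L_{I''}\to\mcl L_L$ are isomorphisms. Composing $\phi'_I\defeq\phi_{I''}\circ F_{\sigma q\vert_I}$ with $\mcl L_{I''}\to\mcl L_L$ gives $\phi_L\circ F_{\beta}$ for $\beta\co I\to L,\ x\mapsto\sigma(q(x))$, while composing $\phi_I$ with $\mcl L_I\to\mcl L_L$ gives $\phi_L\circ F_{\iota}$ for the inclusion $\iota\co I\hookrightarrow L$. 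Since $q\iota=q\beta$, the difference $F_\iota(t_{xy}^{k})-F_\beta(t_{xy}^{k})=t_{x\,\beta(x)}^{k}+t_{\beta(y)\,y}^{k}$ pairs points in common fibres and is killed by $\phi_L$ exactly as in the claim, whence $\phi_L F_\iota=\phi_L F_\beta$; the kernels of $\phi_I$ and $\phi'_I$ coincide, so the two screens agree up to the unique isomorphism of screen data. Naturality of $\Phi$ and $\Psi$ is clear as everything is defined by pullback of sheaves, so Yoneda yields the asserted isomorphism of schemes, which by construction is the composition morphism of \cref{subsec: pointed trees}.
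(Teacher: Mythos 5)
Your proof is correct and takes essentially the same approach as the paper: the forward transformation you define on simple screens (inner screens on fibred sets, $\psi_{q(I)}\circ F_{q\vert_I}$ on spread sets, with the zero transitions giving the vanishing property) is exactly the natural transformation in the paper's proof, and both arguments conclude via representability and Yoneda. The only divergence is in verifying bijectivity: the paper deduces surjectivity from the unique factorization of $\rho_K$ through $F_q$ by the universal property of the quotient (the vanishing property killing the same-fibre generators), whereas you build an explicit two-sided inverse from a section of $q$ together with your transition-isomorphism claim --- the same underlying mechanism, spelled out in more detail.
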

\begin{proof}
    We will construct an explicit natural isomorphism between the functors represented by the left and the right side. Identifying the induced isomorphism of schemes with the map described in \cref{subsec: pointed trees} is left as an unimportant exercise for the reader.
    
    We define this natural transformation 
    $$\tfun{d,n}\times \prod_{r=1}^n \tfun{d,q^{-1}(r)}\to \tfun{d,M}(q^{-1}(1),\dots,q^{-1}(n))$$
    by sending simple screens (on some $\k$-scheme $H$) 
    $$\set{\phi_I\co \mcl F_{I}\to \mcl L^0_I}_{I\in \psets{n}} \times \prod_{r=1}^{n}\set{\psi^r_{J}\co \mcl F_{J}\to \mcl L^r_{J}}_{J\in \psets{q^{-1}(r)}} \to \set{\rho_K\co \mcl F_K\to \mcl L_K}_{K\in \psets{M}},$$
    where $\set{\rho_K\co \mcl F_K\to \mcl L_K}_{K\in \psets{M}}$ is defined as follows.
    We define the line bundles $\mcl L_K$ as 
    $$\mcl L_K = \begin{cases}
        \mcl L^r_{K} & K \sub q^{-1}(r) \textnormal{ some } r\\
        \mcl L^0_{q(K)}& \textnormal{else}
    \end{cases}$$
    and the corresponding quotients as
    $$\rho_K = \begin{cases}
        \psi^r_{K}  & K \sub q^{-1}(r), \textnormal{ some } r\\
        \phi_{q(K)} \circ F_{q}  & \textnormal{else}
    \end{cases}.$$
    For the definition of $F_q\co \mcl F_{K}\to \mcl F_{q(K)}$, see \cref{def: diff sheaf}.
    We define the maps $\mcl L_{K_1} \to \mcl L_{K_2}$, for $K_1 \sub K_2$, to be
    $$\begin{cases}
        \mcl L^r_{K_1} \to \mcl L^r_{K_2} & K_2 \sub q^{-1}(r) \textnormal{ some } r\\
        \mcl L^0_{q(K_1)} \to \mcl L^0_{q (K_2)} & K_1 \not\sub q^{-1}(r) \textnormal{ any } r\\
        0 & K_1 \sub q^{-1}(r) \textnormal{ and } K_2 \not \sub q^{-1}(r)
    \end{cases}.$$
    It is easy to check that $\set{\rho_K}_{K\in \psets{M}}$ with the maps $\mcl L_{K_1} \to \mcl L_{K_2}$ above is a simple screen with the $q^{-1}(r)$-vanishing property for every $r$ and so this transformation is well defined on the level of sets. Furthermore, the map
    is clearly natural and so we have defined a natural transformation as desired. 
    
    To show that this gives a natural isomorphism first note that that this map is clearly injective. Surjectivity follows from the fact that if $\set{\rho_K\co \mcl F_K\to \mcl L_K}_{K\in \psets{M}}$ is a simple screen satisfying the $q^{-1}(r)$-vanishing property for every $r$ then, for every $K$ not contained in any $q^{-1}(r)$ the map $\rho_K\co \mcl F_K\to \mcl L_K$ factorizes uniquely as 
    \begin{center}
    \begin{tikzcd}
        \mcl F_{K} \arrow{rd}{\rho_{K}} \arrow{r}{F_q } & \mcl F_{q(K)} \arrow{d}\\
        & \mcl L_{K}
    \end{tikzcd}
    \end{center}
    by the universal property of the quotient. 
\end{proof}
\begin{remark}
    Since we defined $T_{d,n}(\set{i}) = T_{d,n}([n]) = T_{d,n}$ this statement is true even in the case $n= 1$ or $\abs{q^{-1}(r)} =1$.
\end{remark}

\begin{corollary}\label{cor:composition maps restricted to closed subschemes}
    For any $I \in \psets{n}$, the isomorphism of the proposition restricts to an isomorphism of closed subvarieties
    $$T_{d,n}(I)\times \prod_{r=1}^nT_{d,q^{-1}(r)}\cong T_{d,M}(q^{-1}(1), \dots, q^{-1}(n), q^{-1}(I)).$$
    
    Similarly, for any $I_r\in \psets{q^{-1}(r)}$, the isomorphism of the proposition restricts to an isomorphism of closed subschemes
    $$T_{d,n}\times T_{d,q^{-1}(1)}\times\dots\times T_{d,q^{-1}(r)}(I_r)\times \dots\times T_{d,q^{-1}(n)} \cong T_{d,M}(q^{-1}(1), \dots, q^{-1}(n), I_r).$$
\end{corollary}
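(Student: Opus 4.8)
The plan is to observe that \cref{cor:composition maps restricted to closed subschemes} is merely a refinement of \cref{prop:comp-morph-iso-onto-closed-subscheme}: the natural transformation constructed in its proof is already a bijection from $\tfun{d,n}\times\prod_r\tfun{d,q^{-1}(r)}$ onto $\tfun{d,M}(q^{-1}(1),\dots,q^{-1}(n))$, so by representability (\cref{prop: T_{d,n} functor}) it suffices to check that this same bijection carries the one extra vanishing condition on the source onto the one extra vanishing condition on the target, and conversely. Concretely, writing $\set{\rho_K}$ for the combined simple screen produced from a tuple $(\set{\phi_I},\set{\psi^r_J})$, I would verify that the $\set{\phi_I}$-factor satisfies the $I$-vanishing property if and only if $\set{\rho_K}$ satisfies the $q^{-1}(I)$-vanishing property (first statement), and that the $\set{\psi^r_J}$-factor satisfies the $I_r$-vanishing property if and only if $\set{\rho_K}$ satisfies the $I_r$-vanishing property (second statement). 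Everything then reduces to the explicit formulas for $\mcl L_K$, $\rho_K$, and the transition maps $\mcl L_{K_1}\to\mcl L_{K_2}$ recorded in the proof of the proposition.

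For the first statement I would first record the set-theoretic identity $q(K\cap q^{-1}(I))=q(K)\cap I$. Given $K\in\psets{M}$ with $K\nsubseteq q^{-1}(I)$ and $\abs{K\cap q^{-1}(I)}\geq 2$, I would note that $K$ cannot be contained in a single fibre $q^{-1}(r)$: if $r\in I$ this contradicts $K\nsubseteq q^{-1}(I)$, while if $r\notin I$ then $K\cap q^{-1}(I)=\emptyset$, contradicting $\abs{K\cap q^{-1}(I)}\geq 2$. Hence $\mcl L_K=\mcl L^0_{q(K)}$. Then I would split on whether $K':=K\cap q^{-1}(I)$ lies in a single fibre. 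If it does, then $K'$ lies in some $q^{-1}(s)$ while $K$ does not, so the transition map $\mcl L_{K'}\to\mcl L_K$ falls into the third case of the definition and is automatically zero. If $K'$ meets at least two fibres, then $\mcl L_{K'}=\mcl L^0_{q(K')}$ and the transition map is exactly the $\set{\phi_I}$-screen map $\mcl L^0_{q(K)\cap I}\to\mcl L^0_{q(K)}$; here $q(K)\nsubseteq I$ and $\abs{q(K)\cap I}=\abs{q(K')}\geq 2$, so this map vanishes precisely under the $I$-vanishing property. For the converse, given $J\in\psets{n}$ with $J\nsubseteq I$ and $\abs{J\cap I}\geq 2$, I would build a witness $K$ by choosing one element of $q^{-1}(r)$ for each $r\in J$ (possible by surjectivity of $q$); then $K$ meets several fibres, $q(K)=J$, and the relevant $\set{\rho_K}$-transition map is the $\set{\phi_I}$-map $\mcl L^0_{J\cap I}\to\mcl L^0_J$, so $q^{-1}(I)$-vanishing forces $I$-vanishing.

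The second statement is simpler because $I_r\sub q^{-1}(r)$, so $K\cap I_r$ always lies in the single fibre $q^{-1}(r)$ and hence $\mcl L_{K\cap I_r}=\mcl L^r_{K\cap I_r}$. Given $K\in\psets{M}$ with $K\nsubseteq I_r$ and $\abs{K\cap I_r}\geq 2$, I would split on whether $K$ is contained in a single fibre. If $K\sub q^{-1}(s)$ for some $s$, then necessarily $s=r$, for otherwise $K\cap I_r\sub K\cap q^{-1}(r)=\emptyset$; in this case the transition map is the $\set{\psi^r_J}$-screen map $\mcl L^r_{K\cap I_r}\to\mcl L^r_K$, which vanishes by the $I_r$-vanishing property of that factor. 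If $K$ lies in no single fibre, then $K\cap I_r$ lies in $q^{-1}(r)$ but $K$ does not, so the transition map is again zero by the third case of the definition. The converse is immediate upon restricting attention to those $K$ with $K\sub q^{-1}(r)$, for which the $\set{\rho_K}$-transition map literally is the $\set{\psi^r_J}$-transition map.

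The only real obstacle is the bookkeeping of the three-way case split governing the transition maps $\mcl L_{K_1}\to\mcl L_{K_2}$, together with tracking which fibres the sets $K$, $K\cap q^{-1}(I)$ (respectively $K\cap I_r$) meet. The combinatorial identity $q(K\cap q^{-1}(I))=q(K)\cap I$ and the observation that in the first statement $K$ can never be contained in a single fibre are the two facts that make the hypotheses of the vanishing property line up; once these are in place each direction is a routine matching of cases, and representability (\cref{prop: T_{d,n} functor}) converts the resulting bijection of subfunctors into the asserted isomorphism of closed subschemes.
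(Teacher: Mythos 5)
Your proposal is correct and takes essentially the same route as the paper: the paper's own proof is simply ``The proof is analogous'' (to the proof of \cref{prop:comp-morph-iso-onto-closed-subschemes} via simple screens), and your argument---showing that the explicit bijection of screens from that proof carries the $I$-vanishing (resp.\ $I_r$-vanishing) condition on the factor to the $q^{-1}(I)$-vanishing (resp.\ $I_r$-vanishing) condition on the combined screen and conversely, then invoking representability (\cref{prop: T_{d,n} functor})---is precisely that analogy made explicit. Your case analysis of the transition maps and the identity $q(K\cap q^{-1}(I))=q(K)\cap I$ are correct, so nothing is missing.
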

\begin{proof}
    The proof is analogous.
\end{proof}
\begin{corollary}\label{lemma: pullback of universal screen via composition map}
    Let $\gamma\co T_{d,n} \times \prod_{i=1}^n T_{d,q^{-1}(i)} \to T_{d,M}$ be the composition of the isomorphism above with the inclusion into $T_{d,M}$ and let $\pi_0\co T_{d,n} \times \prod_{i=1}^n T_{d,q^{-1}(i)} \to T_{d,n}$ and $\pi_r \co T_{d,n} \times \prod_{i=1}^n T_{d,q^{-1}(i)} \to T_{d,q^{-1}(r)}$ denote the corresponding projections. There are unique isomorphisms relating the elements of the corresponding universal screens
    $$\gamma^*\mcl M_I^{d,M} \xra{\cong} \begin{cases}
        \pi_r^* \mcl M^{d,q^{-1}(r)}_{I} & I \sub q^{-1}(r) \textnormal{ some } r\\
        \pi^*_0\mcl M^{d,n}_{q(I)}& \textnormal{else}
    \end{cases}.$$
    making (one of) the following diagrams commute:
    \begin{center}
        \begin{tikzcd}
            \mcl F_I\arrow[d, two heads] \arrow[r,two heads, "\textnormal{id}"]& \mcl F_{I} \arrow[d, two heads]\\
            \gamma^*\mcl M_I^{d,M} \arrow[r,"\cong"] & \pi^*_r \mcl M^{d,q^{-1}(r)}_{I}            
        \end{tikzcd}
        \begin{tikzcd}
            \mcl F_I\arrow[d, two heads] \arrow[r,two heads, "F_{q}"]& \mcl F_{q(I)} \arrow[d, two heads]\\
            \gamma^*\mcl M_I^{d,M} \arrow[r,"\cong"] & \pi^*_0 \mcl M^{d,n}_{q(I)}            
        \end{tikzcd}
    \end{center}
\end{corollary}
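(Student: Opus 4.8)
The plan is to read everything off from the representability statement in \cref{prop: T_{d,n} functor} together with the explicit construction of $\gamma$ in \cref{prop:comp-morph-iso-onto-closed-subscheme}; no geometry beyond Yoneda bookkeeping should be needed. Recall from \cref{prop: T_{d,n} functor} that the natural isomorphism $\hom(-,T_{d,M}) \xra{\cong} \tfun{d,M}$ sends a morphism $f\co H \to T_{d,M}$ to the simple screen $f^*(\text{universal simple screen})$, and that the same recipe identifies $\hom(-,T_{d,M}(q^{-1}(1),\dots,q^{-1}(n)))$ with the subfunctor $\tfun{d,M}(q^{-1}(1),\dots,q^{-1}(n))$. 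In particular the universal simple screen on the closed subscheme is the restriction along the inclusion $\iota$ of the one on $T_{d,M}$. Thus to compute $\gamma^*\mcl M_I^{d,M}$, together with its quotient map out of $\mcl F_I$, it suffices to identify, as a simple screen, the element of $\tfun{d,M}$ that $\gamma$ represents.

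Next I would feed the universal screens into the natural transformation of \cref{prop:comp-morph-iso-onto-closed-subscheme}. Set $H = T_{d,n}\times\prod_{r} T_{d,q^{-1}(r)}$. By \cref{prop: T_{d,n} functor} the projections $\pi_0,\pi_1,\dots,\pi_n$ correspond to the pulled-back universal simple screens $\pi_0^*\{\phi_I\}$ and $\pi_r^*\{\psi^r_J\}$, and this tuple corresponds to $\id_H$ under the product of the representability isomorphisms. Since the proof of \cref{prop:comp-morph-iso-onto-closed-subscheme} constructs the isomorphism $H \cong T_{d,M}(q^{-1}(1),\dots,q^{-1}(n))$ precisely as the morphism representing that natural transformation, naturality (Yoneda) gives that $\gamma=\iota\circ(\text{this isomorphism})$ represents exactly the screen $\{\rho_K\}$ produced by the transformation on this tuple. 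Substituting $\mcl L^r_K = \pi_r^*\mcl M^{d,q^{-1}(r)}_K$ and $\mcl L^0_{q(K)} = \pi_0^*\mcl M^{d,n}_{q(K)}$ into the two cases defining $\{\rho_K\}$ then yields exactly the two cases of the claimed isomorphism $\gamma^*\mcl M_I^{d,M}\cong \mcl L_I$.

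It remains to match the quotient maps and to argue uniqueness. The two displayed diagrams merely record that, under this identification of line bundles, the pulled-back quotient $\gamma^*\phi_I$ coincides with $\rho_I$: this is $\pi_r^*\psi^r_I$, so the top arrow is $\id$, when $I\sub q^{-1}(r)$, and $\pi_0^*\phi_{q(I)}\circ F_q$, so the top arrow is $F_q$ as in \cref{def: diff sheaf}, otherwise — which is precisely how $\rho_I$ was defined. Uniqueness is immediate: since each $\phi_I$ and $\psi^r_J$ is an invertible quotient, hence an epimorphism out of $\mcl F_I$, any isomorphism between the two target line bundles compatible with these surjections is determined by them. The only real care required — and the part most likely to hide an error — is the bookkeeping of the two pullbacks $\pi_0^*$ and $\pi_r^*$ on the product, together with the verification that the universal screen on the closed subscheme $T_{d,M}(q^{-1}(1),\dots,q^{-1}(n))$ is genuinely the $\iota$-restriction of the one on $T_{d,M}$; once these are pinned down the statement is a formal consequence of Yoneda.
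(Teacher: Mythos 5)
Your proposal is correct and is exactly the argument the paper compresses into its one-line proof (``this follows immediately from the construction and the definition of the universal screen''): by Yoneda, $\gamma$ corresponds to the screen $\{\rho_K\}$ obtained by feeding the pulled-back universal screens into the natural transformation of \cref{prop:comp-morph-iso-onto-closed-subscheme}, which identifies both the line bundles and the quotient maps, and uniqueness follows since the quotients are epimorphisms. No gaps; your spelled-out version is just a more explicit account of the same approach.
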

\begin{proof}
    This follows immediately from the construction and the definition of the universal screen on $T_{d,n}$.
\end{proof}

The next relevant property of the $T_{d,n}$ spaces is that the $\Sigma_n$ action informally described in \cref{subsec: pointed trees} is well defined. Indeed, this action is precisely the restriction to $T_{d,n}$ of the $\Sigma_n$ action on the Fulton--MacPherson compactification $\afm_n(\A^d)$. For each $\sigma \in \Sigma_n$, it is easy to verify that this action corresponds to the action on $\tfun{d,n}$ given by
$$\set{\phi_I\co \mcl F_I \quot \mcl L_I}_{I \in\psets{n}} \in \tfun{d,n}(H) \mapsto \set{\phi_{\sigma^{-1}(I)} \circ\sigma^{-1}_I \co \mcl F_I \quot \mcl L_{\sigma^{-1}(I)}}_{I \in\psets{n}} \in \tfun{d,n}(H),$$
where
$$\sigma_I \co \mcl F_I \to \mcl F_{\sigma(I)}$$
is defined by
sending $t_{ij}^k \mapsto t^k_{\sigma(i)\sigma(j)}$. 

With the maps we have constructed so far we can define the operad of schemes $\gk_d$ which was informally described in \cref{subsec: pointed trees}. Its objects are $\gk_d(n) = T_{d,n}$, its composition maps are the morphisms of \cref{prop:comp-morph-iso-onto-closed-subscheme} (composed with the inclusion $T_{d,M}(q^{-1}(1),\dots, q^{-1}(n)) \into T_{d,M}$), and its symmetry action is the one described above. There is only one candidate for the unit morphism of the operad since $T_{d,1}\cong \spec \k$. It is easy to, at least intuitively, see why these maps satisfy the operad axioms using their $n$-pointed rooted tree descriptions from section \cref{subsec: pointed trees}. Verifying that the axioms hold using the functorial descriptions of all relevant maps here is a more tedious, but easy, exercise. 
\begin{definition}
    The \emph{Chen, Gibney, Crashen operad}, $\gk_d$ is the operad with objects $\gk_d(n) = T_{d,n}$ and with morphisms as described above.
\end{definition}
\begin{remark}
    This is an operad of smooth $\k$-varieties where all composition morphisms are closed embeddings and the unit map is the identity.
\end{remark}

The case $d=1$ here is of special interest. The operad $\gk_1$ is isomorphic to the ``Deligne--Mumford''-operad whose objects are the moduli spaces of stable $n+1$-pointed curves of genus $0$, $\overline{\mcl M}_{0,n+1}$. Indeed, as mentioned earlier, Chen, Gibney, and Krashen show that there are isomorphisms $T_{1,n} \cong \overline{\mcl M}_{0,n+1}$ and one can verify that these isomorphisms can be chosen such that they commute with the respective operad morphisms. 

\section{Log Geometric Constructions} \label{sec: log geometric operad}
In this section we define log structures on the smooth varieties encountered in \cref{sec: geometric operad} and extend the various morphisms to morphisms of log schemes. To this end the following lemma will be used repeatedly. 
\begin{lemma}\label{lemma: divisor iso from scheme theoretic inverse image}
    Let $f\co X\to Y$ be a map of schemes and let $D\sub X$, $E\sub Y$ be effective Cartier divisors such that $f^{-1}(E) =D$ (scheme theoretically). Then there is a unique isomorphism
    $$f^*\mcl O_Y(E) \xra{\cong} \mcl O_X(D)$$
    taking $f^*s_E \mapsto s_D$.
\end{lemma}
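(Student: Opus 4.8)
The plan is to characterize $(\mcl O_X(D), s_D)$ by the universal property of the line bundle attached to an effective Cartier divisor: among pairs $(\mcl L, s)$ consisting of a line bundle $\mcl L$ on $X$ and a \emph{regular} global section $s$ (locally a non-zero-divisor) whose scheme-theoretic vanishing locus $V(s)$ equals $D$, any two are connected by a unique isomorphism respecting the sections. I would therefore split the lemma into two statements: (i) the pair $(f^*\mcl O_Y(E), f^*s_E)$ is of this kind, with $V(f^*s_E) = D$ scheme-theoretically; and (ii) the uniqueness-of-pairs statement just described, which simultaneously produces the isomorphism and its uniqueness.

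For (i) I would unwind the construction of $\mcl O_Y(E)$. Recall $\mcl O_Y(-E) \cong \mcl I_E$, and $s_E$ is the image of $1$ under the dual of the inclusion, $\mcl O_Y \to \mcl I_E^\vee = \mcl O_Y(E)$. Consequently the vanishing ideal of a section is computed by dualizing: for $s_E$ it is the image of the canonical map $\mcl O_Y(-E) = \mcl I_E \to \mcl O_Y$, namely $\mcl I_E$, so $V(s_E) = E$. Pulling back along $f$, the vanishing ideal of $f^*s_E$ is the image of the composite $f^*\mcl O_Y(-E) = f^*\mcl I_E \to \mcl O_X$. But this image is exactly the ideal sheaf of the scheme-theoretic inverse image $f^{-1}(E)$, which by hypothesis is $\mcl I_D$. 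Hence $V(f^*s_E) = D$ scheme-theoretically.

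For (ii), together with regularity, I would work locally. Choose an open cover trivializing both $f^*\mcl O_Y(E)$ and $\mcl O_X(D)$; over each $U$ the sections $f^*s_E$ and $s_D$ become functions $g, h \in \mcl O_X(U)$ with $(g) = (h) = \mcl I_D|_U$. Since $D$ is an effective Cartier divisor, $\mcl I_D$ is locally generated by a non-zero-divisor, so $h$ may be taken to be a non-zero-divisor; writing $g = ah$ and $h = bg$ forces $ab = 1$, whence $a$ is a unit and $g = ah$ is itself a non-zero-divisor, so $f^*s_E$ is regular. The same relation shows $g$ and $h$ differ by a unique unit, and multiplication by that unit is the unique local isomorphism carrying $f^*s_E \mapsto s_D$. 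Local uniqueness forces these isomorphisms to agree on overlaps, so they glue to a global isomorphism $f^*\mcl O_Y(E) \xra{\cong} \mcl O_X(D)$ taking $f^*s_E \mapsto s_D$, and it likewise gives global uniqueness.

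The main obstacle, such as it is, is purely bookkeeping: matching the scheme-theoretic inverse image of $E$ (defined via the image of $f^*\mcl I_E \to \mcl O_X$) with the scheme-theoretic vanishing of the pulled-back section $f^*s_E$. Once the identification $\mcl O_Y(-E) = \mcl I_E$ and the compatibility of pullback with formation of vanishing ideals are in place, everything reduces to the standard local computation with principal ideals generated by non-zero-divisors, and no deeper input is required beyond the invertibility of $\mcl I_D$.
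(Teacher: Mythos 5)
Your proof is correct, and it shares its essential input with the paper's: both arguments start from the observation that the scheme-theoretic equality $f^{-1}(E)=D$ identifies the image of the map $f^*\mcl I_E \to \mcl O_X$ (equivalently, the vanishing ideal of $f^*s_E$) with $\mcl I_D$. Where you diverge is in how the isomorphism is then extracted. The paper stays global and dualizes: the hypothesis gives an epimorphism $f^*\mcl I_E \quot \mcl I_D$ of sheaves over $\mcl O_X$, both are line bundles, any epimorphism of line bundles is an isomorphism, and taking duals gives the claimed isomorphism of $(f^*\mcl O_Y(E), f^*s_E)$ with $(\mcl O_X(D), s_D)$; uniqueness is dispatched by noting that an isomorphism of line bundles is determined by its value on a single generically non-zero section. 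You instead prove a uniqueness-of-pairs statement by local trivialization: both sections generate $\mcl I_D$ locally, which is principal on a non-zero-divisor, so they differ by a unique unit, and local uniqueness makes the multiplication-by-unit maps glue. Your route is more elementary and self-contained: it avoids the ``epimorphism of line bundles is an isomorphism'' fact, and by explicitly verifying that $f^*s_E$ is a regular section you make the uniqueness step airtight even on non-reduced or reducible schemes, where the paper's appeal to a ``generically non-zero section'' is phrased somewhat loosely. The paper's argument, in exchange, is shorter and requires no choice of cover or gluing. Both are complete proofs of the lemma.
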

\begin{proof}
    First note that since $f^{-1}(E) = D$ there is an epimorphism $f^*\mcl I_E \quot \mcl I_D$ commuting with the corresponding maps to $\mcl O_X$. Since $\mcl I_E$ and $ \mcl I_D$ are ideal sheaves of divisors they are line bundles and since any quotient map of line bundles is an isomorphism this is an isomorphism. Taking duals now yields the result. Uniqueness is clear since an isomorphism of line bundles is uniquely determined by its value for a single generically non-zero section.
\end{proof}

\subsection{The Log Schemes $\lfm_n(X)$, $\msf K_{d,n}$ and $\msf T_{d,n}$}\label{subsec: log fm}
First, let us define the log geometric Fulton--MacPherson compactification which has the nice property that its analytification is the topological Fulton--MacPherson compactification of $\conf_n(\C^d)$, $\topfm_n(\C^d)$.
\begin{definition}
    For a smooth variety $X$, the \emph{log geometric Fulton--MacPherson compactification} (of $\conf_n(X)$), denoted $\lfm_n(X)$, is the log variety with underlying scheme $\afm_n(X)$ together with the line bundles with sections corresponding to the divisors $D(I)$ for every $I \in\psets{n}$.
\end{definition}
Note that all of the line bundles with sections $\mcl O_{\afm_n(X)}(D(I))$ pull back to the trivial line bundle with a nowhere-vanishing section on $\conf_n(X)$. This induces a morphism of log schemes $\conf_n(X) \to \lfm_n(X)$. Furthermore, the following diagram commutes:
\begin{center}
    \begin{tikzcd}
        \conf_n(X) \arrow{rr}\arrow{rrd}\arrow{rrdd} & & \lfm_n(X) \arrow{d}\\
        & & \afm_n(X) \arrow{d}\\
        & & X^n\\
    \end{tikzcd}
\end{center}

Similarly, we can define a log geometric analog of $T_{d,n}$, denoted $\msf K_{d,n}$, by taking the underlying scheme to be $T_{d,n}$ and the log structure given by the pullback of this log structure on $\afm_n(\A^d)$ via the inclusion $T_{d,n} \into \afm_n(\A^d)$. Note that for each $I \in \psets{n}$ with $I \neq [n]$ the line bundle with section corresponding to the divisor $D(I)$ pulls back to the line bundle with section corresponding to $T_{d,n}(I)$ and for $I = [n]$ the pullback of the corresponding line bundle has the zero section. We will let $s_I\co \mcl O_{T_{d,n}}\to \mcl O_{T_{d,n}}(I)$ denote the pullback of $s_I\co \mcl O_{\afm_n(\A^d)} \to \mcl O_{\afm_n(\A^d)}(D(I))$ for each $I \in\psets{n}$.

The operad composition morphisms of the $\gk_d$ operad do not extend to maps of log-schemes for the objects $\msf K_{d,n}$. Instead, in order to extend $\gk_d$ to a (pseudo)-operad of log-schemes we must let its objects be the log-schemes $\msf T_{d,n}$, whose log structures are given by one line bundle with section $s_I\co \mcl O_{T_{d,n}}\to \mcl O_{T_{d,n}}(I)$ for each \emph{non-empty} $I \sub [n]$ defined as above for $\abs{I}\geq 2$ and defined by
$$\mcl O_{T_{d,n}}(\{i\}) \defeq \bigotimes_{I \ni i} \mcl O_{T_{d,n}}(I)^\vee,$$
with $s_{\{i\}} = 0$ for one element sets. For $n=1$ we define $\mcl O_{T_{d,1}}(\{1\}) \defeq \mcl O_{T_{d,1}}$ with the zero section.

\subsection{The Log Geometric CGK Operad}\label{subsec: log geometric operad}
As mentioned, the symmetry and composition morphisms of the $\gk_d$ operad can be extended to morphisms of log schemes $\msf T_{d,n}$. To see this for the symmetry morphisms note that, for $\sigma \in \Sigma_n$ we have that the corresponding automorphism on $\afm_n(X)$, sends each divisor $D(I)$ isomorphically to $D(\sigma(I))$. This induces isomorphisms of line bundles (with sections) $\sigma^*\mcl O_{\afm_n(X)}(D(I)) \to \mcl O_{\afm_n(X)}(D(\sigma^{-1}(I)))$. 
These maps extend the symmetry action on $\afm_n(X)$ to a symmetry action on $\lfm_n(X)$. This in turn gives symmetry actions on $\msf K_{d,n}$ and $\msf T_{d,n}$ in the same way. 

Extending the composition morphisms is slightly trickier. Again, let $n$ be a positive integer and let $q\co M\quot [n]$ be a surjection of sets. As before, let 
$\pi_0\co T_{d,n}\times \prod_{i=1}^n T_{d,q^{-1}(i)} \to T_{d,n}$
and
$\pi_r \co T_{d,n}\times \prod_{i=1}^n T_{d,q^{-1}(i)} \to T_{d,q^{-1}(r)}$
denote the corresponding projections, and let
$$\gamma \co T_{d,n}\times \prod_{i=1}^n T_{d,q^{-1}(i)} \to T_{d,M}$$
be the composition morphism of $\gk_d$. We can immediately show the following.
\begin{lemma}\label{lemma: pullbacks of LB via composition maps easy cases}
    Let $I \subsetneq M$ with $\abs{I} \geq 2$. Then there is a unique isomorphism of line bundles with sections
    \begin{enumerate}[label = (\alph*)]
        \item $\gamma^*\mcl O_{T_{d,M}}(I) \xra{\cong} \pi_r^*\mcl O_{T_{d,q^{-1}(r)}}(I)$ if $I \subsetneq q^{-1}(r)$.
        \item $\gamma^*\mcl O_{T_{d,M}}(I) \xra{\cong} \pi_0^*\mcl O_{T_{d,n}}(q(I))$ if $I =  \bigsqcup_{i\in q(I)} q^{-1}(r)$ and $q(I)\geq 2$.
        \item $\gamma^*\mcl O_{T_{d,M}}(I) \xra{\cong} \mcl O_{T_{d,n}\times \prod T_{d,q^{-1}(i)}}$, where the right hand side is taken with the unit section, if $(I,M_1,\dots, M_n)$ are not nested, i.e. $I$ is not contained in any $q^{-1}(r)$ and also not equal to a union of any collection of $q^{-1}(r)$.
    \end{enumerate}
\end{lemma}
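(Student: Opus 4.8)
The plan is to reduce all three cases to \cref{lemma: divisor iso from scheme theoretic inverse image} by computing the scheme-theoretic preimage $\gamma^{-1}(T_{d,M}(I))$. Recall that each bundle $\mcl O_{T_{d,M}}(I)$ carries its tautological section $s_I$ whose zero scheme is exactly the Cartier divisor $T_{d,M}(I)$, and likewise $\mcl O_{T_{d,q^{-1}(r)}}(I)$ and $\mcl O_{T_{d,n}}(q(I))$ carry sections cutting out $T_{d,q^{-1}(r)}(I)$ and $T_{d,n}(q(I))$. Since the zero scheme of a pulled-back section is the scheme-theoretic preimage of the zero scheme, it suffices to identify $\gamma^{-1}(T_{d,M}(I))$ with the appropriate divisor on the product and then to invoke \cref{lemma: divisor iso from scheme theoretic inverse image}. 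First I would factor $\gamma = \iota \circ \gamma_0$, where $\gamma_0$ is the isomorphism of \cref{prop:comp-morph-iso-onto-closed-subscheme} onto $T_{d,M}(q^{-1}(1),\dots,q^{-1}(n))$ and $\iota$ is the closed embedding into $T_{d,M}$. Then $\gamma^{-1}(T_{d,M}(I)) = \gamma_0^{-1}\bigl(T_{d,M}(I,q^{-1}(1),\dots,q^{-1}(n))\bigr)$, since $\iota^{-1}(T_{d,M}(I))$ is by definition the scheme-theoretic intersection $T_{d,M}(I) \cap T_{d,M}(q^{-1}(1),\dots,q^{-1}(n))$.

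Next I would observe that the three cases are precisely the three ways $I$ can sit relative to the partition $\{q^{-1}(r)\}_{r}$ of $M$. In case (a), $I \subsetneq q^{-1}(r)$, so $I \in \psets{q^{-1}(r)}$, and the second part of \cref{cor:composition maps restricted to closed subschemes} (taking $I_r = I$) identifies $\gamma_0^{-1}(T_{d,M}(I,q^{-1}(1),\dots,q^{-1}(n)))$ with $\pi_r^{-1}(T_{d,q^{-1}(r)}(I))$. In case (b), $I$ is a union of full fibres, so $q^{-1}(q(I)) = I$ and $q(I) \in \psets{n}$ (we have $\abs{q(I)}\geq 2$, and $q(I) \neq [n]$ because $I \subsetneq M$); the first part of the same corollary, with the role of its subset played by $q(I)$, then identifies the preimage with $\pi_0^{-1}(T_{d,n}(q(I)))$. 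In case (c) the tuple $(I,q^{-1}(1),\dots,q^{-1}(n))$ is not nested, so $T_{d,M}(I,q^{-1}(1),\dots,q^{-1}(n)) = \emptyset$ by the nestedness criterion for the $T_{d,M}$, and hence the preimage is empty.

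Finally I would conclude with \cref{lemma: divisor iso from scheme theoretic inverse image}. In cases (a) and (b), both $\gamma^*\mcl O_{T_{d,M}}(I)$ with section $\gamma^* s_I$ and the target bundle with its section (pulled back along $\pi_r$, resp. $\pi_0$) are line bundles with section whose common zero scheme is the divisor computed above; applying the lemma to $\gamma$ and to $\pi_r$ (resp. $\pi_0$) expresses both as $\mcl O(\cdot)$ of that divisor compatibly with sections, and composing yields the asserted isomorphism carrying $\gamma^* s_I$ to the pulled-back section. In case (c) the emptiness of the preimage says $\gamma^* s_I$ is nowhere vanishing, which is the case $D = \emptyset$ of the lemma and gives the canonical trivialization sending $\gamma^* s_I$ to the unit section. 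Uniqueness in every case is the uniqueness clause of \cref{lemma: divisor iso from scheme theoretic inverse image}: an isomorphism of line bundles is determined by the image of a single generically nonzero section. The only genuinely delicate points---and where I would take the most care---are checking that the divisor identities hold scheme-theoretically rather than merely set-theoretically, that the correct part of \cref{cor:composition maps restricted to closed subschemes} applies in each case, and in (b) the identity $q^{-1}(q(I)) = I$, which uses precisely that $I$ is a union of entire fibres of $q$.
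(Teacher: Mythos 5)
Your proposal is correct and takes essentially the same route as the paper: cases (a) and (b) via \cref{cor:composition maps restricted to closed subschemes} combined with \cref{lemma: divisor iso from scheme theoretic inverse image}, and case (c) via the nestedness criterion forcing $T_{d,M}(I,q^{-1}(1),\dots,q^{-1}(n))$ to be empty. Your write-up simply makes explicit the details the paper leaves implicit (the factorization $\gamma = \iota\circ\gamma_0$, the scheme-theoretic preimage computations, the identity $q^{-1}(q(I)) = I$ in case (b), and the $D=\emptyset$ reading of the divisor lemma in case (c)).
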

\begin{proof}
    Cases (a) and (b) follow directly from  \cref{cor:composition maps restricted to closed subschemes} and \cref{lemma: divisor iso from scheme theoretic inverse image}. Case (c) follows from the fact that the image of $\gamma$ is $T_{d,M}(q^{-1}(1), \dots, q^{-1}(n))$ (\cref{prop:comp-morph-iso-onto-closed-subscheme}) which is disjoint with $T_{d,M}(I)$ since $T_{d,M}(I,M_1,\dots, M_n)$ is empty by the nestedness criterion.
\end{proof}
This alone is almost enough to extend the operad composition morphisms to maps of corresponding log-schemes but we must also express 
$\gamma^* \mcl O_{T_{d,M}}(I)$ as a tensor product of the line bundles on $T_{d,n}\times \prod_{i=1}^n T_{d,q^{-1}(i)}$ for $I= M$, $I= q^{-1}(r)$, and $\abs{I} = 1$. Note that in all of these cases the pullback of the sections of these corresponding line bundles are all $0$.
To this end, recall that that \cref{lemma: M_I is tensor prod of O(I') duals} gives isomorphisms of line bundles on $T_{d,n}$ 
$$\mcl M_I \xra{\cong} \bigotimes_{I \sub I'} \mcl O_{T_{d,n}}(I')^\vee,$$
where $\mcl M_I$ are the line bundles of the universal simple screen on $T_{d,n}$ (\cref{subsec: Tdn def and properties}),
such that the following diagram commutes for every $I\sub J$,
\begin{center}
    \begin{tikzcd}
        \mcl M_I \arrow[r] \arrow[d]& \mcl M_J\arrow[d]\\
        \bigotimes_{I\sub I'} \mcl O_{T_{d,n}}(I')^\vee \arrow[r]& \bigotimes_{J \sub J'} \mcl O_{T_{d,n}}(J')^\vee
    \end{tikzcd}
\end{center}
where the bottom arrow is the map 
$$\bigotimes_{I\sub I',\ J\not \sub I'} s_{I'}^\vee.$$
In particular, this means that we have isomorphisms 
$$\mcl O_{T_{d,n}}(I) \cong \mcl M_I^\vee \bigotimes_{I \subsetneq I'} \mcl O_{T_{d,n}}(I)^\vee.$$

\begin{lemma}\label{lemma: pullbacks of LB via composition maps hard cases}
    There are canonical isomorphisms of line bundles
    \begin{enumerate}[label = (\alph*)]
        \item $\gamma^*\mcl O_{T_{d,M}}(M) \xra{\cong} \pi_0^*\mcl O_{T_{d,n}}([n])$.
        \item $\gamma^*\mcl O_{T_{d,M}}(q^{-1}(r)) \xra{\cong} \pi_0^*\mcl O_{T_{d,n}}(\{r\}) \otimes \pi_r^*\mcl O_{T_{d,q^{-1}(r)}}(q^{-1}(r))$.
        \item $\gamma^*\mcl O_{T_{d,M}}(\{i\}) \xra{\cong} \pi_r^*\mcl O_{T_{d,q^{-1}(r)}}(\{i\})$, if $i\in q^{-1}(r)$.
    \end{enumerate}
\end{lemma}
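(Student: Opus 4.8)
The plan is to reduce every part to \cref{lemma: pullback of universal screen via composition map}, which computes the pullbacks $\gamma^*\mcl M_I^{d,M}$ of the universal--screen line bundles, together with the easy cases already established in \cref{lemma: pullbacks of LB via composition maps easy cases}. The bridge between the divisor bundles $\mcl O_{T_{d,M}}(I)$ and the screen bundles $\mcl M_I^{d,M}$ is the identity obtained from \cref{lemma: M_I is tensor prod of O(I') duals},
$$\mcl O_{T_{d,n}}(I) \cong (\mcl M_I)^\vee \otimes \bigotimes_{I\subsetneq I'}\mcl O_{T_{d,n}}(I')^\vee ,$$
valid for all $I\in\psets{n}$ (empty product when $I=[n]$), together with the defining formula $\mcl O_{T_{d,n}}(\{i\}) = \bigotimes_{I\ni i}\mcl O_{T_{d,n}}(I)^\vee$. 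Writing $M_r\defeq q^{-1}(r)$, I would prove the parts in the order (a), (b), (c), since (b) invokes (a) and (c) invokes both. A fibre index is treated as a divisor only when $\abs{M_r}\geq 2$; the degenerate case $\abs{M_r}=1$ is insertion of the operadic unit, which is precisely where the log structure is not preserved and must be handled separately.

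For (a), observe that $\mcl O_{T_{d,M}}(M)=(\mcl M_M^{d,M})^\vee$ is the top screen bundle. For $n\geq 2$ the set $M$ lies in no single fibre $M_r$, so the ``else'' case of \cref{lemma: pullback of universal screen via composition map} gives $\gamma^*\mcl M_M^{d,M}\cong\pi_0^*\mcl M_{q(M)}^{d,n}=\pi_0^*\mcl M_{[n]}^{d,n}=\pi_0^*\mcl O_{T_{d,n}}([n])^\vee$, and dualizing gives the claim.

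For (b), I would expand the left side using the bridge identity at $I=M_r$,
$$\gamma^*\mcl O_{T_{d,M}}(M_r)\cong\gamma^*(\mcl M_{M_r}^{d,M})^\vee\otimes\bigotimes_{M_r\subsetneq K}\gamma^*\mcl O_{T_{d,M}}(K)^\vee .$$
Since $M_r\sub M_r$, the case $I\sub q^{-1}(r)$ of \cref{lemma: pullback of universal screen via composition map} identifies the first factor with $\pi_r^*\mcl O_{T_{d,M_r}}(M_r)$ (because $\mcl M_{M_r}^{d,M_r}=\mcl O_{T_{d,M_r}}(M_r)^\vee$ is the top bundle on $T_{d,M_r}$). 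Each remaining $K$ with $M_r\subsetneq K\sub M$ is either a union $\bigsqcup_{s\in S}M_s$ of whole fibres (necessarily $r\in S$, $\abs{S}\geq 2$), contributing $\pi_0^*\mcl O_{T_{d,n}}(S)^\vee$ by easy case (b) if $S\neq[n]$ and by part (a) if $S=[n]$, or is not nested with the $M_s$, contributing the trivial bundle by easy case (c). Collecting the $\pi_0$--factors over all $S\in\psets{n}$ with $r\in S$ and applying the defining formula for $\mcl O_{T_{d,n}}(\{r\})$ assembles them into $\pi_0^*\mcl O_{T_{d,n}}(\{r\})$, yielding (b).

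Part (c), where I expect the main difficulty, is the same computation with more delicate bookkeeping. Expanding $\gamma^*\mcl O_{T_{d,M}}(\{i\})=\bigotimes_{K\ni i}\gamma^*\mcl O_{T_{d,M}}(K)^\vee$ with $i\in M_r$, I would sort the $K\ni i$ into: proper subsets $K\subsetneq M_r$ (easy case (a)); $K=M_r$ (part (b)); unions $\bigsqcup_{s\in S}M_s$ of fibres with $r\in S$, including $S=[n]$ (easy case (b) and part (a)); and non-nested $K$ (easy case (c), trivial). The $\pi_r$--factors from the first two families combine, by the defining formula, into exactly $\pi_r^*\mcl O_{T_{d,M_r}}(\{i\})$. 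The crux is that all stray $\pi_0$--factors cancel: the factor $\pi_0^*\mcl O_{T_{d,n}}(\{r\})^\vee$ produced by the $K=M_r$ term (through part (b)) is exactly inverse to $\bigotimes_{S\ni r}\pi_0^*\mcl O_{T_{d,n}}(S)^\vee=\pi_0^*\mcl O_{T_{d,n}}(\{r\})$ coming from the fibre-union terms. Checking that these contributions are genuinely inverse---not merely isomorphic---is the delicate point, but it is forced because each building-block isomorphism (from \cref{lemma: divisor iso from scheme theoretic inverse image}, \cref{lemma: M_I is tensor prod of O(I') duals}, and \cref{lemma: pullback of universal screen via composition map}) is the unique such map; hence so is the composite, and canonicity of the final isomorphism in each part is automatic.
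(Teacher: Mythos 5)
Your proposal is correct and takes essentially the same approach as the paper: part (a) via $\mcl O_{T_{d,M}}(M)\cong(\mcl M_{M}^{d,M})^\vee$ and \cref{lemma: pullback of universal screen via composition map}, and part (b) via the bridge identity from \cref{lemma: M_I is tensor prod of O(I') duals} combined with \cref{lemma: pullbacks of LB via composition maps easy cases} and part (a). For part (c) the paper only says it ``follows from a similar argument,'' and your explicit bookkeeping --- sorting the sets $K\ni i$ into subsets of $q^{-1}(r)$, unions of fibres, and non-nested sets, with the resulting cancellation of the $\pi_0^*\mcl O_{T_{d,n}}(\{r\})$ factors --- is precisely that argument carried out (and your decision to treat $\abs{q^{-1}(r)}=1$ separately matches the paper's remark following the lemma).
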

\begin{proof}
    First note that by the above we have $\mcl O_{T_{d,M}}(M) \cong \mcl (M_{M}^{d,M})^\vee$. By \cref{lemma: pullback of universal screen via composition map} we have $\gamma^*\mcl M_{M}^{d,M} \cong \pi_0^*\mcl M_{[n]}^{d,n}$. From this part (a) follows. 
    For part (b) note that \cref{lemma: pullbacks of LB via composition maps easy cases} gives an isomorphism
    $$\bigotimes_{q^{-1}(r)\subsetneq S} \gamma^*\mcl O_{T_{d,M}}(I) \cong \gamma^*\mcl O_{T_{d,M}}(M)\bigotimes_{r\in J \subsetneq [n]} \pi_0^*\mcl O_{T_{d,n}}(J) \cong \bigotimes_{r\in J \sub [n]} \pi_0^*\mcl O_{T_{d,n}}(J)$$
    where the last isomorphism follows from part (a). By definition, this is just $\pi_0^*\mcl O_{T_{d,n}}(\{r\})^\vee$
    and so we get an isomorphism
    $$\gamma^* \mcl O_{T_{d,M}}(q^{-1}(r)) \xra{\cong} \gamma^*(\mcl M_{q^{-1}(r)}^{d,M})^\vee \bigotimes_{q^{-1}(r)\subsetneq I'} \mcl \gamma^*O_{T_{d,M}}(I')^\vee\xra{\cong}\pi_r^*\mcl O_{T_{d,q^{-1}(r)}}(q^{-1}(r))\otimes \pi_0^*\mcl O_{T_{d,q^{-1}(r)}}(\{r\}).$$
    This gives part (b). Part (c) follows from a similar argument.
\end{proof}
\begin{remark}
    If $n = 1$ the left hand sides of (a) and (b) are equal. In this case, both statements are true but we must choose (b) to be the isomorphism defining our map of log schemes for the operad axioms to be satisfied. Similarly, if $\abs{q^{-1}(r)} = 1$ we must also choose the map in (b).
\end{remark}

With this we have defined isomorphisms of line bundles extending the operad composition maps of $\gk_d$ to morphisms of log-schemes
$$\gamma^{\log} \co \msf T_{d,n}\times \prod_{i=1}^n \msf T_{d,q^{-1}(i)} \to \msf T_{d,M}.$$
Unfortunately, there are no maps of log schemes
$$\spec \k \to (\spec \k ,\mcl O_\k \xra{0} \mcl O_\k)$$
and therefore we cannot extend our unit map to a map of log schemes. However, it is easy to verify that despite this the $\circ_i$-operations, $T_{d,n}\times T_{d,m} \xra{\circ_i} T_{d,m+n-1}$, do extend to maps of log schemes $\msf T_{d,n}\times \msf T_{d,m} \xra{\circ_i} \msf T_{d,m+n-1}$. One can verify that these maps, together with the symmetry morphisms satisfy the pseudo-operad axioms. 
\begin{definition}
    The \emph{log-geometric Chen, Gibney, Krashen pseudo-operad}, $\lgk_d$ is the pseudo-operad with objects $\msf T_{d,n}$ and with morphisms as described above.
\end{definition}
\begin{remark}
    In the case $d=1$ this is isomorphic to the framed little curves pseudo-operad constructed by Vaintrob in \cite{vaintrob2021formality}.
\end{remark}

\subsection{Kato--Nakayama Analytifications}
In this section we show that the Kato--Nakayama Analytification of $\lgk_d$ is homeomorphic to the $\S^1$-framed Fulton--MacPherson operad in dimension $2d$, $\tk_{2d}\rtimes \S^1$, which in turn is a model for the $\S^1$-framed little $2d$-disks operad $\ld_{2d} \rtimes \S^1$ as mentioned in \cref{sec:topological constructions}.

We begin by showing that there is a homeomorphism of spaces over $X^n(\C)$,
$$\lfm_n(X)^\kn \cong \topfm_n(X(\C)),$$
which commutes with the corresponding inclusions of $\conf_n(X(\C))$. The rough idea behind our proof of this statement is that the left hand side is the Kato--Nakayama space of a sequence of \emph{logarithmic blow-ups} of $X^n$ and the right hand side is a sequence of real blow-ups of $X(\C)^n$ and under sufficiently nice circumstances these two agree.

\begin{definition}
    Let $\msf X = (X, \mfr L)$ be a log scheme and let $Y$ be a closed subscheme of $X$. Then we let $\logbu{Y} \msf X$ denote the log scheme with underlying scheme $\bu{Y} X$ and log structure given by the log structure $\mfr L$ pulled back to $\bu{Y} X$ \emph{and} the line bundle with section corresponding to the exceptional divisor of the blow-up. We also define the \emph{blow-down map}, $\logbu{Y} \msf X \to X$ to be given by the normal blow-down map on underlying schemes and by ``forgetting'' the added line bundle with section.
\end{definition}

\begin{proposition}\label{prop: analytification of log blow-up}
    Let $X$ be a smooth $\C$-variety and let $Y$ be a smooth subvariety of codimension $k$.
    If $Y$ is the zero locus of a section of a rank $k$ vector bundle on $X$ then there is an isomorphism of spaces over $X(\C)$
    $$(\logbu{Y}X)^{\kn}\cong \realbu{Y(\C)} X(\C).$$
\end{proposition}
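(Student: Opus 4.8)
The plan is to unwind both sides into real oriented blow-ups and then match them, first away from $Y$ and then along the exceptional locus via the normal bundle of the exceptional divisor. First I would analyze the left-hand side. Since $X$ carries the trivial log structure, the log scheme $\logbu{Y}X$ has underlying scheme $\bu{Y}X$ and log structure consisting of the single line bundle with section $s_E\co \mcl O_{\bu{Y}X}\to \mcl O_{\bu{Y}X}(E)$ associated to the exceptional divisor $E$. As $X$ and $Y$ are smooth, $\bu{Y}X$ is smooth and $E$ is a smooth divisor cut out by $s_E$ transversally with multiplicity one. By definition of the Kato--Nakayama analytification we therefore have $(\logbu{Y}X)^\kn = \realbu{s_E}(\bu{Y}X)(\C)$, and by the proposition relating real oriented blow-ups in transverse sections to blow-ups in smooth submanifolds (applied to the complex line bundle $\mcl O(E)^\an$, whose zero locus $E(\C)$ has real codimension $2$) this is canonically isomorphic to $\realbu{E(\C)}(\bu{Y}X)(\C)$. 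It thus remains to produce a homeomorphism $\realbu{E(\C)}(\bu{Y}X)(\C)\cong \realbu{Y(\C)}X(\C)$ over $X(\C)$.

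Away from the exceptional locus this is immediate: the blow-down $\beta\co (\bu{Y}X)(\C)\to X(\C)$ restricts to a homeomorphism $(\bu{Y}X)(\C)\sm E(\C)\xra{\cong} X(\C)\sm Y(\C)$, and neither real oriented blow-up alters these open loci, so both sides restrict to $X(\C)\sm Y(\C)$ compatibly over $X(\C)$. The entire content is therefore a matching of collar neighborhoods of the two exceptional sets. The exceptional set of $\realbu{Y(\C)}X(\C)$ is the unit sphere bundle $S(N)$ of the normal bundle $N\defeq N_{Y(\C)/X(\C)}$, an $\S^{2k-1}$-bundle over $Y(\C)$, while the exceptional set of $\realbu{E(\C)}(\bu{Y}X)(\C)$ is the unit circle bundle of $N_{E/\bu{Y}X}$. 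Using the standard identifications $E\cong \P(N_{Y/X})$ and $N_{E/\bu{Y}X}\cong \mcl O_{\P(N_{Y/X})}(-1)$, and fixing a Hermitian metric on $N_{Y/X}$, the unit circle bundle of the tautological line bundle $\mcl O(-1)$ is canonically the unit sphere bundle $S(N)$: a unit vector in a tautological line is precisely a unit vector of $N$ together with the line it spans. Hence the two exceptional sets are canonically homeomorphic as bundles over $Y(\C)$.

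To finish I would upgrade this boundary identification to a homeomorphism of collar neighborhoods. Choosing a tubular neighborhood of $Y(\C)$ identifies a neighborhood of $Y(\C)$ in $X(\C)$ with a neighborhood of the zero section in the total space of $N$, under which $\realbu{Y(\C)}X(\C)$ becomes the radial real oriented blow-up of this zero section; correspondingly, near $E$ the complex blow-down is modeled by the fibrewise map $\textnormal{Tot}(\mcl O_{\P(N)}(-1))\to \textnormal{Tot}(N)$, $([\ell],v)\mapsto v$, which restricts to the identity on unit vectors. Gluing this collar homeomorphism to the identity on $X(\C)\sm Y(\C)$ gives the desired homeomorphism over $X(\C)$. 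The main obstacle is precisely this last gluing: one must check that the local model for the complex blow-down near $E$ is compatible, through the chosen metric, with the radial retraction governing the real blow-up near $Y$, so that the two collar identifications agree on their overlap and the result commutes with the maps to $X(\C)$. Everything else is either the transversality proposition or the elementary linear-algebra identification $S(\mcl O(-1))\cong S(N)$.
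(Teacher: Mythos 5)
Your route is genuinely different from the paper's, which disposes of the proposition in two sentences: when $Y$ is a complete intersection (the zero locus of a section of $\mcl O_X^{\oplus k}$), both sides have explicit descriptions and the isomorphism is written down directly; the general case then ``follows by gluing,'' which is automatic because any homeomorphism over $X(\C)$ must restrict to the canonical identification over the dense open set $X(\C)\sm Y(\C)$, so the local isomorphisms are unique and patch. Your argument instead stays global: you invoke the paper's (stated but unproven) transversality proposition on the blown-up side to replace $\realbu{s_E}(\bu{Y}X)(\C)$ by $\realbu{E(\C)}(\bu{Y}X)(\C)$, and then compare the two submanifold blow-ups by identifying exceptional sets via $S(N_{E/\bu{Y}X})\cong S(\mcl O_{\P(N)}(-1))\cong S(N)$ and gluing collars. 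Notably, you never use the hypothesis that $Y$ is globally cut out by a section of a rank $k$ bundle, so a completed version of your argument would prove a more general statement; what the paper's reduction buys is that every local step is explicit and canonical, so the delicate point in your approach never arises.

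That delicate point is a genuine gap in the proposal as written, and it is where the whole content of the proposition lives. The claim that, after choosing a tubular neighbourhood $\phi$ of $Y(\C)$, ``the complex blow-down near $E$ is modeled by $\textnormal{Tot}(\mcl O_{\P(N)}(-1))\to\textnormal{Tot}(N)$'' is not automatic: $\phi$ is merely a smooth diffeomorphism, not a holomorphic one, and the holomorphic blow-up is not natural under smooth diffeomorphisms, so $\phi$ does not intertwine the blow-down $\beta\co(\bu{Y}X)(\C)\to X(\C)$ with the tautological map $\tau([\ell],v)=v$ on the nose. To close the gap one must construct the lift $\psi=\tau^{-1}\circ\phi^{-1}\circ\beta$ away from $E(\C)$ (where it is forced by commutativity over $X(\C)$) and prove that it extends to a homeomorphism near $E(\C)$ sending $(y,\ell)\in E(\C)$ to $(\ell,0)$. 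This is true, but it needs an argument: for instance, normalize $\phi$ so that its differential along $Y(\C)$ induces the identity on $N$; then $\phi^{-1}(x)$ and the normal component of $x$ span complex lines that converge to the same limit as $x\to Y(\C)$, which is exactly the continuity required. Alternatively, one can check continuity in local holomorphic charts in which $Y$ is a coordinate subspace---which is precisely the paper's complete-intersection computation---and glue by the uniqueness observation above. Either way, some such local verification is unavoidable; your outline correctly isolates it but does not supply it.
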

\begin{proof}
    This isomorphism is easy to construct when $Y$ is a complete intersection, i.e. the zero locus of a section of $\mcl O^{\oplus k}_X$. The general case follows by gluing. 
\end{proof}
\begin{corollary}\label{cor:log-bu is real-bu when nice}
    Let $\msf X = (X,\mfr L)$ be a log-scheme whose log structure is given by line bundles with sections cutting out smooth divisors $D_1,\dots, D_n$ such that $D_1\cup \dots \cup D_n$ is a strict normal crossings divisor and let $Y$ be as in the proposition with the additional constraint that $Y$ intersects all intersections $D_{i_1}\cap \dots D_{i_k}$ transversally. Then there is an isomorphism of spaces over $\msf X^{\kn}$
    $$(\logbu{Y}X)^{\kn}\cong \realbu{\tilde Y} \msf X^{\kn}$$
    where $\tilde Y$ denotes the strict transform of $Y(\C)$ under the sequence of blow-down maps 
    $$\msf X^{\kn} \to X(\C).$$
\end{corollary}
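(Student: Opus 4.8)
The plan is to strip off the ambient divisors $D_1,\dots,D_n$ using the Cartesian property of strict morphisms, reducing the statement to the single new divisor created by the blow-up, and then to invoke \cref{prop: analytification of log blow-up} after a base change to $\msf X^\kn$.

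First I would use that the Kato--Nakayama blow-up sequence is independent of order up to canonical isomorphism, so in computing $(\logbu{Y}\msf X)^\kn$ I may blow up the pulled-back sections of the $D_i$ first and the exceptional section $s_E$ last. Writing $\pi\co\bu{Y}X\to X$ for the blow-down and $\msf Z\defeq(\bu{Y}X,\pi^*\mfr L)$ for the blow-up carrying $\mfr L$ pulled back but without the exceptional divisor, the evident morphism $\msf Z\to\msf X$ is strict with underlying map $\pi$. Hence the Cartesian property gives a canonical identification over $\msf X^\kn$,
$$\msf Z^\kn\cong(\bu{Y}X)(\C)\times_{X(\C)}\msf X^\kn,$$
and since the log structure of $\logbu{Y}\msf X$ is that of $\msf Z$ together with the exceptional divisor $E$, blowing up $E$ last yields $(\logbu{Y}\msf X)^\kn\cong\realbu{\widetilde{s_E}}\msf Z^\kn$, where $\widetilde{s_E}$ is the pulled-back exceptional section. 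The corollary is therefore equivalent to the single assertion
$$\realbu{\widetilde{s_E}}\Big((\bu{Y}X)(\C)\times_{X(\C)}\msf X^\kn\Big)\cong\realbu{\tilde Y}\msf X^\kn\quad\text{over }\msf X^\kn,$$
with $\tilde Y=\rho^{-1}(Y(\C))$ the strict transform under the blow-down $\rho\co\msf X^\kn\to X(\C)$.

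This reduced statement is exactly \cref{prop: analytification of log blow-up} base changed along $\rho$, and I would prove it by the same local method. Locally on $X$ I would choose analytic coordinates adapted simultaneously to $Y$ and the divisors, in which each $D_i$ through the point is a coordinate hyperplane and $Y$ is a coordinate subspace cut out by $k$ of the remaining coordinates; transversality of $Y$ to every stratum $D_{i_1}\cap\dots\cap D_{i_j}$ is precisely what makes such a simultaneous normal form available, and in particular forces $Y\not\subseteq D_i$ for every $i$. In these coordinates the section of the rank-$k$ bundle cutting out $Y$ pulls back, under the real oriented blow-downs in the $D_i$, to a section of the pulled-back bundle that is still transverse to the zero section; its zero locus is a smooth codimension-$k$ submanifold which coincides with the strict transform $\tilde Y$. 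Both sides of the reduced statement then become the standard real oriented blow-up of a local complete intersection, for which the canonical isomorphism of \cref{prop: analytification of log blow-up} applies verbatim, compatibly with the projection to $\msf X^\kn$. Since this isomorphism is canonical (determined by the sections), the local identifications agree on overlaps and glue to a global isomorphism over $\msf X^\kn$, as in the proof of the Proposition.

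The main obstacle is precisely this local step: one must use transversality to guarantee that, once the real oriented blow-ups in the $D_i$ have modified $X(\C)$ along the divisors, the pulled-back defining section of $Y$ remains transverse to the zero section, so that the identification ``real oriented blow-up in a transverse section $=$ real oriented blow-up in its zero submanifold'' survives the base change, and so that the resulting center is the strict transform rather than a larger preimage. Once this is in place, matching the exceptional $\S^{2k-1}$-fibres on the two sides and checking that the gluing is natural is routine.
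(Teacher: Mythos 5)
Your proposal is correct and takes essentially the same route as the paper: the paper's proof likewise pulls back the defining section $t$ of $Y$ to $\msf X^{\kn}$, uses \cref{prop: analytification of log blow-up} to identify $(\logbu{Y}\msf X)^{\kn}$ with $\realbu{\tilde t}\msf X^{\kn}$, and invokes the transversality hypothesis to conclude that the zero locus of the pulled-back section is precisely the strict transform $\tilde Y$, so that blowing up in the section agrees with blowing up in $\tilde Y$. Your additional details (order-independence of the blow-up sequence, the Cartesian property for the strict morphism to $\msf X$, and the local normal form argument) are an expanded version of what the paper's terser proof leaves implicit.
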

\begin{proof}
    Let $t\co \mcl O_X \to \mcl V$ denote the vector bundle section cutting out $Y$. It is clear from the proposition that there is an isomorphism 
    $$(\logbu{Y}X)^{\kn}\cong \realbu{\tilde t} \msf X^{\kn}$$
    where $\tilde t$ denotes the pullback (i.e. \emph{total} transform) to $\msf X^\kn$ of the vector bundle section $t$ seen as a section of a topological vector bundle $t \in \Gamma(X(\C),{\mcl V})$. The transversallity conditions for intersections with $Y$ imply that $\tilde Y$ is the zero locus of the section $\tilde t$. This completes the proof.
\end{proof}

\begin{proposition}
    There is a unique homeomorphism of spaces over $X^n(\C)$,
    $$\lfm_n(X)^\kn \cong \topfm_n(X(\C)),$$
    which commutes with the corresponding inclusions of $\conf_n(X(\C))$.
\end{proposition}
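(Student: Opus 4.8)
The plan is to realize both sides as one and the same tower of real oriented blow-ups of $X^n(\C)$ in the (strict transforms of the) complexified diagonals $\Delta_I(\C)$, and then to read off the homeomorphism together with its properties.

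First I would record the algebraic side as an iterated \emph{log} blow-up. By the facts recalled in \cref{subsec: alg fm}, there is an ordering $I_1,\dots,I_N$ of $\psets{n}$ for which $\afm_n(X)$ is obtained from $X^n$ by successively blowing up the strict transforms of the diagonals $\Delta_{I_t}$, each such center being smooth of the expected codimension and the resulting exceptional divisors $\{D(I)\}$ forming a smooth strict normal crossings family. Since a single log blow-up adjoins precisely the line bundle with section of the new exceptional divisor, the log scheme $\lfm_n(X)$ is canonically the iterated log blow-up
$$\lfm_n(X) \;\cong\; \logbu{Z_N}\cdots\logbu{Z_1}X^n,$$
where $X^n$ carries the trivial log structure and $Z_t$ denotes the strict transform of $\Delta_{I_t}$ at stage $t$; indeed after all $N$ stages the accumulated log structure is exactly the family $\{D(I)\}_{I\in\psets{n}}$ defining $\lfm_n(X)$.

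I would then analytify one blow-up at a time. Writing $\msf W_t$ for the log scheme after $t$ stages, its log structure is given by the smooth strict normal crossings divisors created so far, and the next center $Z_t$ — a transform of a diagonal — is smooth and meets every intersection of those divisors transversally (this transversality is the geometric content that keeps the $D(I)$ a strict normal crossings family, and it is the hypothesis I must check at each stage). Because the diagonals are regularly embedded, hence locally zero loci of sections of trivial vector bundles, \cref{prop: analytification of log blow-up} together with \cref{cor:log-bu is real-bu when nice} apply locally and glue, giving by induction a homeomorphism over $X^n(\C)$
$$(\msf W_t)^\kn \;\cong\; \realbu{\widetilde Z_t}\cdots\realbu{\widetilde Z_1}X^n(\C),$$
where $\widetilde Z_s$ is the strict transform of $\Delta_{I_s}(\C)$ through the preceding real blow-downs. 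Taking $t=N$ exhibits $\lfm_n(X)^\kn$ as the iterated real oriented blow-up of $X^n(\C)$ in the strict transforms of all the $\Delta_I(\C)$.

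For the topological side I would invoke the classical identification of the Fulton--MacPherson compactification, defined in \cref{subsec:topo Fulton--MacPherson} as the closure of $\conf_n(X(\C))$ inside $\prod_I\realbu{\Delta_I}X(\C)^I$, with precisely this iterated real oriented blow-up of $X(\C)^n$ in the strict transforms of the $\Delta_I(\C)$ (see \cite{axelrodsinger1994chernsimons,sinha2004manifold}); since both towers are order independent up to canonical isomorphism, I may use the same order as above. Comparing the two descriptions yields a canonical homeomorphism $\lfm_n(X)^\kn\cong\topfm_n(X(\C))$ over $X^n(\C)$. Away from the diagonals every blow-up is an isomorphism, so over $\conf_n(X(\C))$ this homeomorphism is the identity and hence commutes with the two inclusions of $\conf_n(X(\C))$. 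Uniqueness is then immediate: $\conf_n(X(\C))$ is dense in both (Hausdorff) spaces, so any homeomorphism over $X^n(\C)$ compatible with the inclusions is determined on a dense subset. The step I expect to demand the most care is the stagewise verification of the transversality hypothesis of \cref{cor:log-bu is real-bu when nice} — that each diagonal transform meets the accumulated exceptional divisors transversally, and that the strict-transform centers are the ones actually produced by the algebraic tower — together with pinning down the classical iterated-blow-up description of $\topfm_n(X(\C))$ in the last step, which is standard but not formal.
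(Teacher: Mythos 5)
Your proposal is correct and follows essentially the same route as the paper: both exhibit $\afm_n(X)$ as an iterated blow-up along transforms of the diagonals (so that $\lfm_n(X)$ becomes an iterated log blow-up), identify $\topfm_n(X(\C))$ with the corresponding tower of real oriented blow-ups, and then induct using \cref{cor:log-bu is real-bu when nice}, with the stagewise transversality hypothesis as the crux. The only difference is that the paper resolves the step you flag as delicate by citing Theorem 1.3 of \cite{li2009wonderful} and observing that any ordering of $\psets{n}$ by decreasing cardinality satisfies both the blow-up-order conditions and the required transversality (and in that ordering your strict transforms agree with the paper's dominant transforms), while your explicit density argument for uniqueness is left implicit in the paper.
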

\begin{proof}
    First, recall from \cite{li2009wonderful} that the Fulton--MacPherson space $\afm_n(X)$ is isomorphic to the sequence of blow-ups
    $$\bu{\tilde \Delta_{I_N}}\dots \bu{\tilde \Delta_{I_1}} X^n$$ where $I_1, \dots, I_N$ are the sets in $\psets{[n]}$ in \emph{any} order satisfying the conditions of Theorem 1.3 in \cite{li2009wonderful} and $\tilde \Delta_{I_i}$ denotes the \emph{dominant} transform of the diagonal $\Delta_{I_i} \sub X^n$ under all previous blow-ups. Furthermore, recall that the divisors $D(I_i)$ are the \emph{dominant} transforms of the exceptional divisor $\tilde D(I_i)$ from the blow-up 
    $$\bu{\tilde \Delta_{I_i}}\bu{\tilde \Delta_{I_{i-1}}}\dots \bu{\tilde \Delta_{I_1}} X^n \to \bu{\tilde \Delta_{I_{i-1}}}\dots \bu{\tilde \Delta_{I_1}} X^n$$
    taken under all subsequent blow-ups $\bu{\tilde \Delta_{I_N}}\dots\bu{\tilde \Delta_{I_{i+1}}}$. Additionally, the topological Fulton--MacPherson space $\topfm_n(X(\C))$ can also be written as a sequence of real oriented blow-ups of $X^n(\C)$ in its diagonals in a similar manner.

    The proposition would now follow from \cref{cor:log-bu is real-bu when nice} by induction if there was an ordering $I_1, \dots, I_N$ of the sets in $\psets{[n]}$ satisfying the aforementioned conditions such that for every $1\leq i \leq n$ 
    $$\tilde \Delta_{I_{i+1}} \sub \bu{\tilde \Delta_{I_i}}\bu{\tilde \Delta_{I_{i-1}}}\dots \bu{\tilde \Delta_{I_1}} X^n$$ meets any intersection of the divisors $\tilde D(I_1) \dots \tilde D(I_i)$ transversally. It is not hard to show that any ordering where the sets $I\in \psets{[n]}$ appear in decreasing order of size, i.e. $\abs{I_i}\geq \abs{I_{i+1}}$, satisfies all criteria and thus we are done. 
\end{proof}
\begin{remark}
    Note that the transversality conditions here are not implied by the arrangement conditions in Theorem 1.3 of \cite{li2009wonderful}. Hence, there is no obvious analogue of the above proposition for general wonderful compactifications.
\end{remark}
\begin{corollary}\label{cor: K and T KN analytifications}
    There are isomorphisms 
    \begin{itemize}
        \item $\lfm_n(X)^\kn \cong \topfm_n(X(\C))$
        \item $\msf K_{d,n}^\kn \cong \kont_{2d,n}$
        \item $\msf T_{d,n} \cong (\S^1)^n\times \kont_{2d,n}$
    \end{itemize}
    which commute with all relevant maps, i.e. inclusions of $\conf_n(X(\C))$, maps to $X^n(\C)$, inclusions of fibres, etc. 
\end{corollary}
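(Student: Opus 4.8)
The plan is to obtain all three homeomorphisms from the preceding proposition, which is exactly the first bullet; the other two are then deduced by combining it with the strictness of the defining inclusions and with the triviality results for real oriented blow-ups in zero sections from \cref{subsec: blow-ups}. Throughout I write $\rho_{\mathrm{alg}}\co\afm_n(\A^d)(\C)\to(\C^d)^n$ for the (complex points of the) algebraic Fulton--MacPherson blow-down and $\rho_{\log}\co\lfm_n(\A^d)^\kn\to\afm_n(\A^d)(\C)$ for the Kato--Nakayama blow-down.

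For the second bullet, the first step is to note that the inclusion $i\co\msf K_{d,n}\into\lfm_n(\A^d)$ is \emph{strict}: by construction the log structure of $\msf K_{d,n}$ is the pullback along $T_{d,n}\into\afm_n(\A^d)$ of that of $\lfm_n(\A^d)$. The Cartesian property of the analytification functor for strict morphisms then yields
$$\msf K_{d,n}^\kn\cong T_{d,n}(\C)\times_{\afm_n(\A^d)(\C)}\lfm_n(\A^d)^\kn.$$
Transporting $\lfm_n(\A^d)^\kn\cong\topfm_n(\C^d)$ through the first bullet (a homeomorphism over $(\C^d)^n$), the right-hand side becomes the preimage $\rho_{\log}^{-1}(T_{d,n}(\C))$ inside $\topfm_n(\C^d)$. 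The geometric input I would now invoke is the defining property of $T_{d,n}$: since $T_{d,n}=\pi^{-1}(0)$ and any $\C$-point of $\afm_n(\A^d)$ lying over $0\in(\C^d)^n$ has all of its points collided and hence lies on $D([n])$, the subspace $T_{d,n}(\C)$ is precisely the fibre $\rho_{\mathrm{alg}}^{-1}(0)$. As the first bullet identifies the composite $\rho_{\mathrm{alg}}\circ\rho_{\log}$ with the topological blow-down $\rho\co\topfm_n(\C^d)\to(\C^d)^n$, the preimage in question is exactly $\rho^{-1}(0)=\kont_{2d,n}$, giving $\msf K_{d,n}^\kn\cong\kont_{2d,n}$.

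For the third bullet I would compute $\msf T_{d,n}^\kn$ as the sequence of real oriented blow-ups in the sections of all the line bundles carried by $\msf T_{d,n}$. The line bundles indexed by $I\in\psets{n}$ are exactly those of $\msf K_{d,n}$, so blowing up in their sections first reproduces $\msf K_{d,n}^\kn\cong\kont_{2d,n}$ by the second bullet. It remains to blow up, for each $i\in[n]$, in the zero section of $\mcl O_{T_{d,n}}(\{i\})=\bigotimes_{I\ni i}\mcl O_{T_{d,n}}(I)^\vee$. Every factor $\mcl O_{T_{d,n}}(I)$ has already been blown up, so the pullback of this tensor product to $\msf K_{d,n}^\kn$ is trivial and \cref{cor: blow up in zero section of tensor prod of blown up line bundles is trivial} applies: blowing up in its zero section produces a product with $\S^1$. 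Order-independence of the blow-ups guarantees that the factors produced by earlier one-element sets do not affect the triviality of the later ones, so iterating over all $i$ yields $\msf T_{d,n}^\kn\cong\kont_{2d,n}\times(\S^1)^n$.

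Finally, compatibility with the maps asserted in the statement follows from naturality: the Cartesian squares above are natural in the strict inclusions, the first bullet already commutes with the inclusions of $\conf_n(X(\C))$ and the maps to $X^n(\C)$, and the projection forgetting the extra $\S^1$-factors corresponds to the blow-down $\msf T_{d,n}^\kn\to\msf K_{d,n}^\kn$. The step I expect to be the main obstacle is the fibre identification in the second bullet --- pinning down that the inclusion $T_{d,n}(\C)\into\afm_n(\A^d)(\C)$ realizes $T_{d,n}(\C)$ as $\rho_{\mathrm{alg}}^{-1}(0)$ and that this matches $\kont_{2d,n}=\rho^{-1}(0)$ under the first bullet's homeomorphism; once this is established, the $\S^1$-trivialization of the third bullet is essentially formal given \cref{cor: blow up in zero section of tensor prod of blown up line bundles is trivial}.
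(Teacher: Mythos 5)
Your proposal is correct and follows essentially the same route as the paper: the first bullet is quoted from the preceding proposition, the second is obtained from strictness of $\msf K_{d,n}\into \lfm_n(\A^d)$ together with the Cartesian property of analytification and the identification of $T_{d,n}(\C)$ with the fibre of $\afm_n(\A^d)(\C)\to(\C^d)^n$ over the origin (the paper phrases this as a pasting of two Cartesian squares, you as a fibre-product computation, which is the same argument), and the third is exactly the paper's application of \cref{cor: blow up in zero section of tensor prod of blown up line bundles is trivial} to the zero sections of the bundles $\mcl O_{T_{d,n}}(\{i\})$. Your explicit justification that the fibre over the origin lies in $D([n])$ is a detail the paper leaves implicit ("we already know that the bottom square is Cartesian"), but it is the same proof.
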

\begin{proof}
    We have already constructed the isomorphism $\lfm_n(X)^\kn \cong \topfm_n(X(\C))$. Since the morphism $\msf K_{d,n} \into \lfm_n(\A^d)$ is strict, the top square in the following diagram is Cartesian.
    \begin{center}
        \begin{tikzcd}
            \msf K_{d,n}^{\kn} \arrow[r] \arrow[d]& (\lfm_n(\A^d))^{\kn}\arrow[d]\\
            T_{d,n}(\C) \arrow[r] \arrow[d] &\afm_n(\A^d)(\C)\arrow[d]\\
            * \arrow[r, hook, "\textnormal{origin}"] & (\C^d)^n
        \end{tikzcd}
    \end{center}
    We already know that the bottom square is Cartesian and thus the entire diagram is Cartesian, i.e. $\msf K_{d,n}^\kn \to (\lfm_n(\A^d))^{\kn}$ is the inclusion of the fibre over the origin in $(\C^d)^n$ and thus $(\lfm_n(\A^d))^\kn \cong \topfm_n(\C^d)$ restricts to an isomorphism
    $\msf K_{d,n}^\kn \cong \kont_{2d,n}.$ Notice that we get $2d$ on the right hand side since $\C\cong \R^2$. Furthermore, we know that 
    $$\msf T_{d,n}^\kn = \realbu{0\in \mcl O(\{1\})}\dots\realbu{0\in \mcl O(\{n\})} \msf K_{d,n}^\kn.$$
    By \cref{cor: blow up in zero section of tensor prod of blown up line bundles is trivial} this is homeomorphic to 
    $(\S^1)^n\times \kont_{2d,n},$ i.e. the $n$th space in the $\tk_{2d}\rtimes \S^1$ operad. It is easy to show that these choices of isomorphisms make all relevant maps commute as claimed.
\end{proof}

Next, we will show that the maps $f_I\co \topfm_n(X(\C)) \to \realbu{\Delta_I}X^I$, $\pi_I\co \kont_{2d,n} \to \S^{2d(\abs{I}-1)-1}$ (which were defined in \cref{sec:topological constructions}) and the projections to the $\S^1$ components $\pi_i\co (\S^1)^n\times \kont_{2d,n} \to \S^1$ are all identified with analytifications of explicit maps of corresponding log schemes under the isomorphisms described in the proof of \cref{cor: K and T KN analytifications}. 

Let $\msf S_{m}$ denote the log scheme $(\P^m, 0\co \mcl O \to \mcl O(-1))$. Its Kato--Nakayama space $\msf S_m^{\kn}$ homeomorphic to $\S^{2m+1}$ and the map $\S^{2m+1}\cong S_m^{\kn} \to \P^m(\C)$ is the Hopf fibration (proving this using \cref{prop: analytification of log blow-up} may provide some insight into the idea behind the arguments to come).
Recall from \cref{subsec: alg fm} that the composition
$$\afm_n(X) \to X^n \to X^I$$
factors via a map $f_I \co \afm_n(X) \to \bu{\Delta_I} X^I$ for any $I \in\psets{n}$, and furthermore that there is an isomorphism of line bundles with sections 
$$f_I^*\mcl O(E_{\Delta_I}) \cong \bigotimes_{I \sub I'}\mcl O(D(I')).$$ This isomorphism defines a map of log schemes 
$$f_I^{\log}\co \lfm_n(X) \to \logbu{\Delta_I}X^I.$$

Also recall that the fibre over the origin for the blow-down map $\bu{\Delta_I} X^I \to X^I$ is $\P^{d(\abs{I}-1)-1}$ and that, if $i$ denotes the inclusion of this fibre, we have that $i^* \mcl O(E_{\Delta_I}) \cong \mcl O(-1)$.  
This means that 
$\msf S_{d(\abs{I}-1)-1} =  (\P^{d(\abs{I}-1)-1}, 0\co \mcl O \to \mcl O(-1))$ is the fibre over the origin of the log blow-down map $\logbu{\Delta_I} X^I \to X^I$ and $f_I^{\log}$ restricts to a map $\pi_I^{\log}\co \msf K_{d,n} \to \msf S_{d(\abs{I}-1)-1}$.

The maps we have described fit together in the following diagram:
\begin{center}
        \[\begin{tikzcd}
        	{\conf_n(\A^d)} && {\afm_n(\A^d)} && {\msf K_{d,n}} \\
        	\\
        	&& {\logbu{\Delta_{I}}(\A^d)^{I}} && {\msf S_{d(|S|-1)-1}} \\
        	\\
        	&& {(\A^d)^{I}}
        	\arrow[from=1-1, to=1-3]
        	\arrow[from=1-1, to=5-3]
        	\arrow["{f_I^{\log}}", from=1-3, to=3-3]
        	\arrow[from=1-5, to=1-3]
        	\arrow["{\pi_I^{\log}}", from=1-5, to=3-5]
        	\arrow[from=3-3, to=5-3]
        	\arrow[from=3-5, to=3-3]
        	\arrow["0", from=3-5, to=5-3]
        \end{tikzcd}\]
    \end{center}
The analytification of this diagram is:
\begin{center}
        \[\begin{tikzcd}
        	{\conf_n(\R^{2d})} && {\topfm_n(\R^{2d})} && {\kont_{2d,n}} \\
        	\\
        	&& {\realbu{\Delta_{I}}(\R^{2d})^{I}} && {\S^{2d(|S|-1)-1}} \\
        	\\
        	&& {(\R^{2d})^{I}}
        	\arrow[from=1-1, to=1-3]
        	\arrow[from=1-1, to=5-3]
        	\arrow["{(f_I^{\log})^\kn}", from=1-3, to=3-3]
        	\arrow[from=1-5, to=1-3]
        	\arrow["{(\pi_I^{\log})^\kn}", from=1-5, to=3-5]
        	\arrow[from=3-3, to=5-3]
        	\arrow[from=3-5, to=3-3]
        	\arrow["0", from=3-5, to=5-3]
        \end{tikzcd}\]
    \end{center}
Since $\conf_n(\R^{2d})$ is dense in $\topfm_n(\R^{2d})$ and since the blow-down map $\rho_I\co \realbu{\Delta_{I}}(\R^{2d})^{I} \to (\R^{2d})^I$ has an inverse on $\conf_I(\R^{2d})$ there is only one possible set of maps $(f_I^{\log})^\kn$ and $(\pi_I^{\log})^\kn$ which can make this diagram commute. By definition, this diagram also commutes if we insert the maps $f_I, \pi_I$ defined in \cref{sec:topological constructions} and thus we must have $(f_I^{\log})^\kn = f_I$ and $(\pi_I^{\log})^\kn = \pi_I$.

Finally, let $\pi_i^{\log}$ be the map
$$\pi_i^{\log}\co \msf T_{d,n} \to \msf S_0 = (\spec \C, 0\co \mcl O_\C\to \mcl O_\C)$$ given by the only possible map on the level of schemes and the isomorphism of log structures 
$$(\pi_i^{\log})^* \mcl O_{\C} \xra{\cong} \bigotimes_{i\in S} \mcl O_{T_{d,n}}(I).$$
It is clear that the analytification of this map is the projection map $\pi_i\co (\S^1)^n\times \kont_{2d,n} \to \S^1$ as desired.

We are now ready to show that the analytifications of the composition and symmetry maps in $\lgk_d$ are the composition and symmetry maps of $\tk_{2d}\rtimes \S^1$. For the symmetry action this is trivial as we can again use that $\conf_n(\C^d) \sub \topfm_n(\C^d)$ is dense to conclude this using an argument similar to the one above. 

A somewhat more complicated argument is needed to prove that the analytifications of the $\lgk_n$ composition morphisms and $\circ_i$-operations are the corresponding maps of the $\tk_{2d}\rtimes \S^1$ operad. We merely give an outline of this argument. 
First note that for any surjection of finite sets $\vp\co I\quot J$ the induced map $(\A^d)^J \to (\A^d)^I$ extends uniquely to a map $\logbu{\Delta_J}(\A^d)^J \to \logbu{\Delta_I}(\A^d)^I$ which restricts to a map $g_\vp^{\log}\co \msf S_{d(\abs{J}-1)-1}\to \msf S_{d(\abs{I}-1)-1}$. By the same argument as above the analytification of this map must be the map $g_\vp\co \S^{2d(\abs{J}-1)-1} \to \S^{2d(\abs{I}-1)-1}$ defined in \cref{sec:topological constructions}. Using this one can verify that the analytification of $\gamma^{\log}$ is the composition morphism of $\tk_{2d}\rtimes \S^1$ componentwise. That is, if $\gamma$ denotes the composition map in $\tk_{2d}\rtimes \S^1$ we can verify that 
$$\pi_I \circ \gamma = \pi_I\circ (\gamma^{\log})^\kn = (\pi_I^{\log}\circ\gamma^{\log})^\kn$$
for every $I \sub M$ and from this $\gamma = (\gamma^{\log})^\kn$ immediately follows. The $\circ_i$ case is analogous.
We include the proof for one important step which hopefully also provides some insight into the methods used to fill in the remaining steps of the argument.

Let $f\co \msf S_0\times \msf S_n\to \msf S_n$ be the map given by the canonical isomorphism $f\co \spec \C\times \P^n\to \P^n$ on underlying schemes and by the obvious isomorphism $f^*\mcl O_{\P^n}(-1) \xra{\cong} \pi_1^*\mcl O_{\spec \C}\otimes \pi_2^*\mcl O_{\P^n}(-1)$ for the log structures. Here $\pi_1,\pi_2$ denote the corresponding projection maps from the product $\spec \C \times \P^n$. 
\begin{proposition}\label{prop: S^1 action on S^(2n-1) is algebraic}
    The analytification $f^\kn\co \S^1\times \S^{2n-1} \to \S^{2n-1}$ is the $\S^1\cong \SO(2)$ action on $\S^{2n-1}$ induced by the diagonal inclusion $\SO(2) \into \SO(2n)$.
\end{proposition}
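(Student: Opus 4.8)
The plan is to identify both sides concretely and then isolate the single piece of data in $f$ that carries content, namely the tensor isomorphism of line bundles. First I would fix geometric models for the analytifications. Since $\msf S_n = (\P^n, 0\co \mcl O\to\mcl O(-1))$ and $\mcl O(-1)$ is the tautological bundle, the complement of its zero section is $\C^{n+1}\sm\set 0$ lying over $\P^n(\C)$, so by the definition of the real oriented blow-up $\msf S_n^\kn = (\C^{n+1}\sm\set 0)/\R_{>0}\cong \S^{2n+1}$, the unit sphere, with $\msf S_n^\kn\to\P^n(\C)$ the Hopf map. Likewise $\msf S_0^\kn = \C^*/\R_{>0}\cong\S^1 = U(1)$, and since the analytification commutes with products the source is $(\msf S_0\times\msf S_n)^\kn\cong\S^1\times\S^{2n+1}$, matching the identifications already used in \cref{cor: K and T KN analytifications}.

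Next I would factor $f = b\circ a$ as a morphism that is the identity on underlying schemes followed by a strict morphism, exactly the two cases from which the analytification functor is built. Write $u\co\spec\C\times\P^n\xra{\cong}\P^n$ for the canonical isomorphism of underlying schemes. Then $b\co(\spec\C\times\P^n, u^*\mcl O(-1))\to\msf S_n$ is strict with underlying map $u$, while $a$ is the identity on $\spec\C\times\P^n$ together with the given tensor isomorphism $u^*\mcl O_{\P^n}(-1)\xra{\cong}\pr_1^*\mcl O_{\spec\C}\otimes\pr_2^*\mcl O_{\P^n}(-1)$. By the Cartesian property of the analytification of a strict morphism, and because $u$ is an isomorphism, $b^\kn$ is a homeomorphism identifying $(\spec\C\times\P^n, u^*\mcl O(-1))^\kn$ with $\msf S_n^\kn\cong\S^{2n+1}$ over the base identification $u$. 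Thus all the content is concentrated in $a^\kn$.

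The heart of the argument is to show that $a^\kn$ is fibrewise scalar multiplication. Conceptually, the circle bundle of a tensor product $L_1\otimes L_2$ is the fibrewise contracted product of the circle bundles of $L_1$ and $L_2$ (compare \cref{cor: blow up in zero section of tensor prod of blown up line bundles is trivial}), and $a^\kn$ is precisely this tensoring map; since the first factor $\pr_1^*\mcl O_{\spec\C}$ is the trivial bundle over the point, tensoring with its unit-circle coordinate $w\in\S^1$ is just multiplication by the unit scalar $w$. To make this rigorous I would unwind the explicit description of the analytification of a morphism of DF log structures from the Kato--Nakayama subsection: the datum of $a$ is the single exponent vector $(1,1)$ expressing $u^*\mcl O(-1)$ as the product of the two bundles, and after choosing the evident compatible trivialization the correction factor $\lambda$ is a unit, so the induced map on the complements of the zero sections is $(z_1,z_2)\mapsto z_1z_2$. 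Writing $z_1 = w\in\S^1$ and $z_2 = v\in\C^{n+1}\sm\set 0$ and passing to $\R_{>0}$-quotients, this is exactly $a^\kn\co\S^1\times\S^{2n+1}\to\S^{2n+1}$, $(w,v)\mapsto wv$.

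Finally, composing with the identification $b^\kn$ gives $f^\kn(w,v) = wv$, the action of $w\in U(1)$ on $v\in\S^{2n+1}\sub\C^{n+1}$ by scalar multiplication; this is an $\S^1$-action since $w_1(w_2v) = (w_1w_2)v$, and under $\C^{n+1}\cong\R^{2(n+1)}$ with $U(1)\cong\SO(2)$ it is precisely the action induced by the diagonal inclusion of the statement. The main obstacle I anticipate is the third step: one must track the chosen trivializations and the exponent datum carefully enough to be certain that the scalar is $w$ itself, and not $\overline w$ or $w^{-1}$, so that $f^\kn$ is the asserted diagonal $\SO(2)$-action with its standard orientation rather than its inverse or conjugate.
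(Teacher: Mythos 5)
Your proposal is correct, but it takes a genuinely different route from the paper's proof. You factor $f=b\circ a$ into a morphism over the identity of $\spec \C\times\P^n$ (carrying the tensor isomorphism, exponent vector $(1,1)$) followed by a strict morphism over the isomorphism $u$, dispose of the strict part via the Cartesian property, and then compute $a^\kn$ directly from the chart-level description of the analytification of a morphism of DF log structures, obtaining fibrewise multiplication $(w,v)\mapsto wv$. The paper never invokes that explicit formula (it states, after introducing it, that only functoriality, the blow-down description and the Cartesian square for strict morphisms will be used); instead it lifts the scaling map $F\co \A^1\times\A^n\to\A^n$, $(z,x)\mapsto zx$, through the universal property of blow-ups to $\tilde F\co \A^1\times\bu{p}\A^n\to\bu{p}\A^n$, promotes $\tilde F$ to log schemes using \cref{lemma: divisor iso from scheme theoretic inverse image}, and places $f$ in a commutative diagram with the strict fibre inclusions $\msf S_0\times\msf S_{n-1}\into \logbu{o}\A^1\times\logbu{p}\A^n$; after analytifying, the blow-down maps are homeomorphisms over dense subsets, so there is a unique pair of continuous maps filling in the diagram, and since the diagonal circle action visibly does so, $f^\kn$ must equal it. Your computation buys an explicit answer --- in particular it settles the $w$ versus $\bar w$ or $w^{-1}$ worry you raise, which the paper's uniqueness argument silently absorbs --- at the price of relying on the coordinate description of the Kato--Nakayama functor that the paper deliberately avoids; the paper's density trick is softer and is the same device used to identify $f_I$, $\pi_I$ and $g_\vp$ throughout the section, which keeps the whole argument uniform. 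Two small points: what you actually need of the correction factor is $\lambda\equiv 1$ for your compatible trivializations (being a unit is automatic and not by itself sufficient), which does hold here; and your spheres come out as $\S^{2n+1}$ rather than the $\S^{2n-1}$ of the statement --- that mismatch is an indexing slip in the paper itself (its proof works with $\msf S_{n-1}$ and $\bu{p}\A^n$), and your indexing consistently follows the displayed definition of $f$.
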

\begin{proof}
    First, let $F\co \A^1 \times \A^n \to \A^n$ be the map $(z,(x_1,\dots, x_n))\mapsto (zx_1,\dots, zx_n)$. By the universal property of the blow-up this induces a map 
    $\tilde F \co \A^1\times \bu{p}\A^n \to \bu{p} \A^n$, where $p$ denotes the origin in $\A^n$. Furthermore, $\tilde F^{-1}(E_p)$, where $E_p$ is the exceptional divisor in $\bu{p}\A^n$, is the scheme theoretic union $o\times\bu{p}\A^n \cup \A^1 \times E_p$. By lemma \ref{lemma: divisor iso from scheme theoretic inverse image} this gives a commutative diagram of log varieties:
    \begin{center}
        \begin{tikzcd}
            \msf S_0\times \msf S_{n-1} \arrow{d}{i} \arrow{rr}{f}&& \msf S_0\arrow{d}{j}\\
            \logbu{o}\A^1 \times \logbu{p}\A^n \arrow{rr}{\tilde F^{\log}}\arrow{d}&& \logbu{p}\A^n\arrow{d}{\rho}\\
            \A^1\times \A^n \arrow{rr}{F}&& \A^n
        \end{tikzcd}
    \end{center}
    where $\tilde F^{\log}$ denotes the map $\tilde F$ extended to a map log varieties in the obvious way and where $i,j$ are the strict morphisms induced by the inclusions of fibres over the origin for the underlying schemes. The analytification of this diagram is of the following form:
    \begin{center}
        \begin{tikzcd}
            \S^1 \times \S^{2n-1} \arrow{d} \arrow{rr}{f'}&& \S^{2n-1}\arrow{d}\\
            \realbu{o} \C \times \realbu{p}\C^{n} \arrow{rr}{\tilde F'}\arrow{d}&& \realbu{p}\C^{n}\arrow{d}\\
            \C\times \C^{n} \arrow{rr}{F^\an} &&\C^{n}
        \end{tikzcd}
    \end{center}

    where $\tilde F' = \tilde F^\kn$ and $f' = f^\kn$. Since $\rho$ is a homeomorphism on a dense subset there is only one pair $f',\tilde F'$ which can make the diagram commute by the same argument as above. It is easy to construct a function $\tilde F'$ such that the diagram commutes with $f'$ equal to the group action morphism and from this the lemma follows.
\end{proof}

The results of this section taken together give our main theorem.
\begin{theorem}\label{thm: main theorem (non-unital operad iso)}
    The Kato--Nakayama analytification of $\lgk_d$ is homeomorphic to $\tk_{2d}\rtimes \S^1$.
\end{theorem}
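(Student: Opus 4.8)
The plan is to upgrade the family of homeomorphisms $\msf T_{d,n}^\kn \cong (\S^1)^n \times \kont_{2d,n}$ from \cref{cor: K and T KN analytifications} into an isomorphism of pseudo-operads between the Kato--Nakayama analytification of $\lgk_d$ and $\tk_{2d} \rtimes \S^1$. By definition of the semidirect product, the $n$th space of $\tk_{2d}\rtimes \S^1$ is exactly $(\S^1)^n \times \kont_{2d,n}$, so the object-level identification is already in hand; what remains is to check that these chosen homeomorphisms intertwine the symmetry, composition, and $\circ_i$ maps on the two sides. Since analytification is functorial, I would phrase everything as a comparison of $(\gamma^{\log})^\kn$ with the topological composition $\gamma$, and similarly for the symmetry maps.

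First I would dispose of the symmetry maps. Both the topological action on $\kont_{2d,n}$ and the algebraic one on $\msf T_{d,n}$ restrict the permutation action on the compactification of $\conf_n$, and since $\conf_n(\C^d)$ is dense in $\topfm_n(\C^d)$ while the blow-down map is invertible over the open configuration stratum, there is a \emph{unique} continuous map making the relevant square commute. As both the analytification of the log symmetry map and the topological symmetry map qualify, they coincide. This density-plus-uniqueness principle is the engine for the rest of the argument.

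For the composition maps I would exploit that a point of $\kont_{2d,M}$ is determined by its sphere components $\pi_I$ and that the framing factor $(\S^1)^M$ is detected by the projections $\pi_i$, $i\in M$; all of these have been identified in this section with analytifications, $\pi_I = (\pi_I^{\log})^\kn$ and $\pi_i = (\pi_i^{\log})^\kn$. Thus it suffices to verify, for every $I\sub M$ and every framing index $i$, the equalities
\[
\pi_I\circ(\gamma^{\log})^\kn = \pi_I\circ\gamma, \qquad \pi_i\circ(\gamma^{\log})^\kn = \pi_i\circ\gamma .
\]
Each left-hand side rewrites by functoriality as $(\pi_I^{\log}\circ\gamma^{\log})^\kn$, respectively $(\pi_i^{\log}\circ\gamma^{\log})^\kn$; the sphere-to-sphere pieces are the maps $(g_\vp^{\log})^\kn$, which by the same density argument equal the topological gluing maps $g_\vp$, while the framing pieces are governed by \cref{prop: S^1 action on S^(2n-1) is algebraic}. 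Since the $\pi_I$ and $\pi_i$ jointly separate points of $(\S^1)^M\times\kont_{2d,M}$, matching these components forces $(\gamma^{\log})^\kn = \gamma$; the $\circ_i$-operations are handled identically.

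The main obstacle is the composition maps, and specifically the interplay between the framing $\S^1$ factors and the gluing. In $\tk_{2d}\rtimes \S^1$ the composition is \emph{twisted}: before gluing, the $\S^1$ labels act on the inserted sphere data via the diagonal embedding $\S^1\cong\SO(2)\into\SO(2d)$. What makes the argument work is that the line-bundle isomorphisms defining $\gamma^{\log}$ in \cref{lemma: pullbacks of LB via composition maps easy cases} and \cref{lemma: pullbacks of LB via composition maps hard cases}---in particular the tensor factor $\pi_0^*\mcl O_{T_{d,n}}(\{r\})$ appearing in case (b)---analytify to exactly this rotation action rather than to the untwisted product, which is precisely the content of \cref{prop: S^1 action on S^(2n-1) is algebraic}. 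Confirming that these algebraically prescribed gluing isomorphisms reproduce the twisted topological composition, and not merely a product, is the crux; once it is checked on each $\pi_I$ and $\pi_i$ component, the object-level homeomorphisms assemble into the desired pseudo-operad isomorphism and the theorem follows.
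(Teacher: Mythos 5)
Your proposal is correct and follows essentially the same route as the paper: the paper likewise assembles the object-level homeomorphisms of \cref{cor: K and T KN analytifications} into an isomorphism of pseudo-operads by identifying $\pi_I$ and $\pi_i$ with analytifications of explicit log morphisms, disposing of the symmetry maps via the density-of-$\conf_n$ uniqueness argument, and verifying the composition and $\circ_i$ maps componentwise through $\pi_I\circ\gamma = (\pi_I^{\log}\circ\gamma^{\log})^\kn$ together with $(g_\vp^{\log})^\kn = g_\vp$ and \cref{prop: S^1 action on S^(2n-1) is algebraic} for the framing twist. The only difference is one of emphasis: the paper leaves the componentwise verification as an outline and proves in detail precisely the $\S^1$-action step that you also single out as the crux.
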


\subsection{Virtual Log Geometric Generalizations}\label{subsec: virtual log operads}
We end this article by discussing some apparent generalizations of these results in the category of virtual log schemes. The most immediate improvement we get with virtual morphisms is that, by \cref{example: virt morph point into circle}, we there are virtual morphisms 
$$\spec \C \to \msf S_0.$$
It is not hard to show that we can choose such a morphism such that it satisfies the unit axiom for our operad. Hence, in the category of log schemes with virtual morphisms we can define $\lgk_d$ to be an operad, not just a pseudo-operad. It is clear that \cref{thm: main theorem (non-unital operad iso)} still holds for this operad. 

Virtual morphisms also allow us to construct the (unframed) Fulton--MacPherson operad $\tk_{2d}$ as the analytification of an operad of log schemes with virtual morphisms. 
Indeed, the operad composition map
$$\gamma \co T_{d,n}\times \prod_{i=1}^n T_{d,q^{-1}(i)} \to T_{d,M}$$
extends to a virtual morphism of log schemes
$$\msf K_{d,n}\times \prod_{i=1}^n \msf K_{d,q^{-1}(i)} \to \msf K_{d,M}.$$
This map is defined using \cref{lemma: pullbacks of LB via composition maps easy cases} and \cref{lemma: pullbacks of LB via composition maps hard cases} but this time we replace the isomorphism in \cref{lemma: pullbacks of LB via composition maps hard cases}, (b) with the isomorphism
$$\gamma^*\mcl O_{T_{d,M}}(q^{-1}(r)) \xra{\cong} \left (\bigotimes_{r\in I}\pi_0^* \mcl O_{T_{d,n}}(I)^{\otimes(-1)}\right ) \otimes \pi_r^* \mcl O_{d, q^{-1}(r)}(q^{-1}(r)).$$
This gives a well defined virtual morphism of log structures by our definitions. These maps define an operad $\vgk_d$ in log schemes with virtual morphisms.
Using the same methods as before one can show that the analytification of the resulting operad is $\tk_{2d}$.
\begin{theorem}\label{thm: virtual log operad}
        The Kato--Nakayama analytification of $\vgk_d$ is homeomorphic to $\tk_{2d}$.
\end{theorem}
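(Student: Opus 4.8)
The plan is to follow the proof of \cref{thm: main theorem (non-unital operad iso)} almost verbatim, the only differences being that the objects are now $\msf K_{d,n}$ rather than $\msf T_{d,n}$ and that the composition maps are virtual. On objects there is nothing new to do: \cref{cor: K and T KN analytifications} already provides homeomorphisms $\msf K_{d,n}^\kn \cong \kont_{2d,n} = \tk_{2d}(n)$ compatible with all structure maps. The symmetry morphisms of $\vgk_d$ are the honest log morphisms obtained by restricting the $\Sigma_n$-action on $\lfm_n(\A^d)$, so their analytifications are the symmetry maps of $\tk_{2d}$ by the same density argument as in the main theorem. The unit is also unproblematic here: since $\psets{1} = \emptyset$ the log scheme $\msf K_{d,1}$ carries the trivial log structure, so $\msf K_{d,1}^\kn$ is a point and the unit is the ordinary morphism $\spec\k \to \spec\k$; in contrast to $\lgk_d$, no virtual morphism as in \cref{example: virt morph point into circle} is needed.

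The substantive step is the composition. First I would invoke the fact that the Kato--Nakayama analytification functor extends to virtual morphisms, so that the analytification $(\gamma^{\mathrm{virt}})^\kn$ of the virtual composition morphism is a well-defined continuous map. I would then run the strategy used for $\gamma^{\log}$: by density of $\conf_n$ inside each $\kont_{2d,n}$, and since the relevant blow-down maps are isomorphisms over the configuration locus, the analytification of a (virtual) log morphism into $\msf K_{d,M}$ is pinned down by its compositions with the projections $\pi_I^{\log}\co \msf K_{d,M} \to \msf S_{d(\abs{I}-1)-1}$. Using $(\pi_I^{\log})^\kn = \pi_I$ from the main theorem, it suffices to verify for each $I \sub M$ the identity $\pi_I \circ (\gamma^{\mathrm{virt}})^\kn = (\pi_I^{\log} \circ \gamma^{\mathrm{virt}})^\kn$ and to match the right-hand side with the two cases of the $\tk_{2d}$ composition formula, namely $y^r_I$ when $I \sub q^{-1}(r)$ and $g_q(x_{q(I)})$ otherwise. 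For the cases governed by \cref{lemma: pullbacks of LB via composition maps easy cases} and by parts (a), (c) of \cref{lemma: pullbacks of LB via composition maps hard cases} this is identical to the main theorem, using the analytification of $g_\vp^{\log}$ together with \cref{prop: S^1 action on S^(2n-1) is algebraic}.

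The one place where $\vgk_d$ genuinely differs from $\lgk_d$ is the case $I = q^{-1}(r)$, where the defining isomorphism is the virtual one
$$\gamma^*\mcl O_{T_{d,M}}(q^{-1}(r)) \xra{\cong} \Bigl(\bigotimes_{r\in I'}\pi_0^* \mcl O_{T_{d,n}}(I')^{\otimes(-1)}\Bigr) \otimes \pi_r^* \mcl O_{T_{d,q^{-1}(r)}}(q^{-1}(r)),$$
which is legitimate precisely because $\gamma^* s_{q^{-1}(r)} = 0$, the image of $\gamma$ lying in $T_{d,M}(q^{-1}(r))$. Here I would check that, after composing with $\pi_{q^{-1}(r)}^{\log}$ and tensoring over the relevant $I'$ exactly as in the proof of \cref{lemma: pullbacks of LB via composition maps hard cases}(b), the negative-exponent factor telescopes away and one is left with the projection onto the $r$-th factor followed by its top projection $\pi_{q^{-1}(r)}^{\log}$. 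The content of the modification is that, since $\msf K_{d,n}$ has no one-element-set line bundles $\mcl O_{T_{d,n}}(\{r\})$ — exactly the bundles whose zero-section blow-ups produced the $(\S^1)^n$ factor in \cref{cor: blow up in zero section of tensor prod of blown up line bundles is trivial} — the would-be framing contribution is absorbed into the existing generators with a sign and analytifies trivially on this component.

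I expect the main obstacle to be precisely this bookkeeping for $I = q^{-1}(r)$: one must confirm that replacing the honest isomorphism of \cref{lemma: pullbacks of LB via composition maps hard cases}(b) by its negative-exponent virtual counterpart does not disturb the commuting diagrams that pin the analytification down, and in particular that the resulting continuous map carries no residual $\S^1$-action, so that the analytification is $\tk_{2d}$ and not $\tk_{2d}\rtimes \S^1$. Once this single component is settled, associativity, equivariance, and unitality of the analytified operad follow formally from those of $\vgk_d$ together with the functoriality of Kato--Nakayama analytification, exactly as in \cref{thm: main theorem (non-unital operad iso)}.
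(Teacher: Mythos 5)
Your proposal is correct and takes essentially the same approach as the paper, which defines $\vgk_d$ on the objects $\msf K_{d,n}$, substitutes the negative-exponent virtual isomorphism for \cref{lemma: pullbacks of LB via composition maps hard cases}(b), and then verifies the identification with $\tk_{2d}$ componentwise via the projections $\pi_I^{\log}$ ``by the same methods'' as \cref{thm: main theorem (non-unital operad iso)}; your telescoping computation for $I = q^{-1}(r)$ is precisely the key step the paper leaves implicit. Two inessential remarks: part (c) of \cref{lemma: pullbacks of LB via composition maps hard cases} and \cref{prop: S^1 action on S^(2n-1) is algebraic} are actually not needed in the unframed setting, since the singleton bundles (and hence the framing rotations) are absent from $\msf K_{d,n}$, so the verification here is strictly simpler than in the framed case.
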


\bibliographystyle{alphaurl}
\bibliography{references}
\end{document}